\DeclareMathOperator{\dom}{dom}
\DeclareMathOperator{\range}{range}
\DeclareMathOperator{\image}{''}
\DeclareMathOperator{\len}{len}
\DeclareMathOperator{\acc}{acc}
\DeclareMathOperator{\Ult}{Ult}
\DeclareMathOperator{\cf}{cf}
\DeclareMathOperator{\crit}{crit}
\DeclareMathOperator{\rank}{rank}
\DeclareMathOperator{\power}{\mathcal{P}}
\DeclareMathOperator{\Col}{Col}
\newcommand{\chang}{\twoheadrightarrow}
\newcommand{\ZF}{\mathrm{ZF}}
\newcommand{\ZFC}{\mathrm{ZFC}}
\newcommand{\Ord}{\mathrm{Ord}}
\newcommand{\SCH}{\mathrm{SCH}}
\newcommand{\otp}{\mathrm{otp}}
\newtheorem{theorem}{Theorem}
\newtheorem{lemma}[theorem]{Lemma}
\newtheorem{claim}[theorem]{Claim}
\newtheorem{remark}[theorem]{Remark}
\newtheorem{definition}[theorem]{Definition}
\newtheorem{proposition}[theorem]{Proposition}
\newtheorem{question}[theorem]{Question}
\begin{document}
\title{Prikry type forcings and the Bukovsk\'y-Dehornoy phenomena}
\author{Yair Hayut}
\address{
Einstein Institute of Mathematics \\
Edmond J. Safra Campus \\ 
The Hebrew University of Jerusalem \\
Givat Ram. Jerusalem, 9190401, Israel}
\email{yair,hayut@mail.huji.ac.il}
\thanks{This document is based on a tutorial lectures that were given in Torino, in the 8th European Set Theory Conference. 
This research was supported by the Israel Science Foundation 1967/21. 
}
\begin{abstract}
This paper is meant to present in a coherent way several instances of quite common phenomena that was first identified (independently) by Bukovsk\'y and Dehornoy. We present the basic result for Prikry type forcing and show how to extend it to the Gitik-Shraon forcing, the Extender Based Prikry forcing, Prikry forcings with interleaved collapses and Radin forcing for $o(\kappa) < \kappa^+$. 
\end{abstract}
\maketitle
In 1968, Karl Prikry introduces the Prikry forcing, a forcing notion that changes the cofinality of a regular cardinal  without collapsing cardinals, \cite{Prikry1968}. Almost ten years later Bukovsk\'{y} and Dehornoy published independently a theorem showing that a generic for the Prikry forcing can be obtained in an iterated ultrapower (see Theorem \ref{thm:BD-for-vanilla}), \cite{Bukovsky1977, Dehornoy1978}, and moreover --- the generic extension over the iterated ultrapower is exactly the intersection of the intermediate steps in the iteration.   

This theorem did not get a lot of attention and remained mostly unknown. Dehornoy found a generalizations of this theorem to Magidor forcing, \cite{Dehornoy1983} and later Hamkins generalized it to tree forcing \cite{Hamkins1997}. Woodin and Cummings develop a generalization of the theorem for the Radin forcing, which remained unpublished. 

In \cite{Sakai}, Sakai derived a version of this theorem for Prikry-like forcings based on ideals but more importantly gave a way to construct a Prikry type forcing from an iteration of elementary emebedding. This method was used recently in \cite{Aguilera} in order to construct a sophisticated forcing notion for controlling the intersection of set of indiscernibles for a set of reals $A$.

In \cite{FuchsHamkins}, Fuchs and Hamkins analysed the possible Bukovsk\'y-Dehornoy phenomena for various forcing notions (both the existence of generic over iterated ultrapower and the intersection model theorem).  
 
In this paper we would like to continue the line of investigation of Sakai, but going even further: we would like to show that almost all properties of the generic extension by a Prikry type forcings can be derived naturally by considering a variant of the Bukovsk\'{y}-Dehornoy theorem and analysing the properties of the intersection model. 

In Section \ref{section:BD-theorem}, I collected some facts which are partially folklore but not known to the wide audience regarding the Bukovsk\'{y}-Dehornoy Theorem. In particular, I illuminate the potential shift in view, focusing on the generic extension as the intersection of models instead, and deriving the properties of the Prikry forcing from it. The results in this section are not due to me, and I put the right citations when possible. 

In Section \ref{section:GS-forcing}, I derive the results of Gitik-Sharon \cite{GitikSharon2008}, using the Bukovsk\'{y}-Dehornoy approach. In Section \ref{section:EBPF} I derive a Bukovsk\'{y}-Dehornoy Theorem for the Extender Based Prikry forcing and for Prikry forcing with interleaved collapses.

In Section \ref{section:Magidor-Radin} we derive a variant of the Bukovsk\'{y}-Dehornoy Theorem for the Magidor and Radin forcing. This theorem seems to be folklore as well, but I show there how to derive the main properties of Radin forcing using this approach. The main result of the last section is the \emph{failure} of the Bukovsk\'{y}-Dehornoy Theorem for Radin forcing with $o(\kappa) = \kappa^{+}$.

\section{Prikry forcing and the Bukovsk\'y-Dehornoy Theorem}\label{section:BD-theorem}
In this section we will present the Prikry forcing from the perspective of the Bokuvsk\'{y}-Dehornoy theorem. Instead of designing a forcing notion and analysing its properties, our goal is to construct a pair of models of $\ZFC$, with the same cardinals $N_0 \subseteq N_1$ such that there is an $N_0$-regular cardinal which is singular in $N_1$.
\subsection{Iterated ultrapowers}
Recall the definition of a measure on $\kappa$.
\begin{definition}
Let $\kappa$ be a regular uncountable cardinal. 
\begin{itemize}
\item An ultrafilter $U\subseteq \mathcal{P}(\kappa)$ is $\kappa$-complete if every intersection of $<\kappa$ sets from $U$ is in $U$.
\item We say that $U \subseteq \mathcal{P}(\kappa)$ is a \emph{normal measure} if $U$ is an ultrafilter that contains all co-bounded sets and for every $f \colon A \to \kappa$ such that $A \in U$ and $f(\alpha) < \alpha$ for all $\alpha \in A$, there is $\gamma \in \range f$, such that $f^{-1}(\{\gamma\}) \in U$. 
\end{itemize}
\end{definition}
Note that every normal measure is $\kappa$-complete. 

\begin{lemma}[Scott, \cite{Scott1961}]
An ultrafilter $U$ on $\kappa$ is normal if and only if there is an elementary embedding $j \colon V \to M$, such that $M$ is well founded and $\crit j = \kappa$, and $U = \{X \subseteq \kappa \mid \kappa \in j(X)\}$. 
\end{lemma}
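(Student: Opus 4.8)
The plan is to prove the two implications separately, with the ultrapower construction carrying the substantive content of the forward direction.

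For the direction ($\Leftarrow$), I would assume $j \colon V \to M$ is elementary with $M$ well founded, $\crit j = \kappa$, and $U = \{X \subseteq \kappa \mid \kappa \in j(X)\}$, and verify the three clauses of normality by pushing each set-theoretic statement through $j$ and testing membership of the ordinal $\kappa$. That $U$ is an ultrafilter is immediate from $j(\kappa \setminus X) = j(\kappa) \setminus j(X)$ together with $\kappa < j(\kappa)$, the latter because $\crit j = \kappa$. For co-boundedness, if $\kappa \setminus X \subseteq \alpha < \kappa$ then $j(\kappa \setminus X) \subseteq j(\alpha) = \alpha$, so $\kappa \notin j(\kappa\setminus X)$ and hence $\kappa \in j(X)$. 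For the regressive-function clause, given $f$ with $A \in U$ regressive on $A$, elementarity makes $j(f)$ regressive on $j(A) \ni \kappa$, so $\gamma := j(f)(\kappa) < \kappa$; since $\gamma = j(\gamma)$, we get $\kappa \in (j(f))^{-1}\{\gamma\} = j(f^{-1}\{\gamma\})$, i.e.\ $f^{-1}\{\gamma\} \in U$, and $\gamma \in \range f$ because this preimage is nonempty.

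For the direction ($\Rightarrow$), I would form the ultrapower $\Ult(V, U)$ of classes $[f]_U$ of functions $f \colon \kappa \to V$, with equality and the membership relation $E$ read off from membership of the relevant sets in $U$; \L o\'s's theorem then makes the canonical map $j_U(x) = [c_x]_U$ elementary. The first point requiring care is \emph{well-foundedness} of $E$: a putative $E$-descending sequence would produce sets $A_n = \{\alpha : f_{n+1}(\alpha) \in f_n(\alpha)\} \in U$, and since $U$ is $\kappa$-complete (in particular countably complete, as noted after the definition) $\bigcap_n A_n \in U$ is nonempty, yielding a genuine infinite $\in$-descent in $V$ and contradicting foundation. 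So $\Ult(V,U)$ is well founded, and I take $M$ to be its transitive collapse and $j$ the composite of $j_U$ with the collapse.

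It then remains to compute $\crit j$ and to recover $U$. Using $\kappa$-completeness, every $f \colon \kappa \to \alpha$ with $\alpha < \kappa$ is constant on a set in $U$, giving $j \restriction \kappa = \mathrm{id}$, while the co-bounded sets force $[c_\alpha]_U \mathbin{E} [\mathrm{id}]_U \mathbin{E} [c_\kappa]_U = j(\kappa)$ for all $\alpha < \kappa$, so $\kappa \le [\mathrm{id}]_U < j(\kappa)$ and $\crit j = \kappa$. The crux is the identity $[\mathrm{id}]_U = \kappa$ after collapse, and this is exactly where normality (rather than mere completeness) enters: any class below $[\mathrm{id}]_U$ is represented by a function regressive on a set in $U$, so by normality that function is constant with some value $\gamma < \kappa$ on a set in $U$, whence the class equals $[c_\gamma]_U = \gamma$; thus no ordinal strictly below $\kappa$ is omitted and $[\mathrm{id}]_U = \kappa$. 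Finally, for $X \subseteq \kappa$,
\[
\kappa \in j(X) \iff [\mathrm{id}]_U \mathbin{E} [c_X]_U \iff \{\alpha < \kappa : \alpha \in X\} = X \in U,
\]
which is the desired equation. The backward direction is pure elementarity bookkeeping; the main obstacle is in the forward direction, and the two places that genuinely use the hypotheses are the completeness-driven well-foundedness argument and the normality-driven computation $[\mathrm{id}]_U = \kappa$, the latter being the conceptual heart since it is normality that pins the seed of the ultrapower to $\kappa$ and thereby recovers $U$ from $j$.
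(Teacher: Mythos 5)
Your proof is correct, and it is the standard argument — indeed the paper states this lemma without proof, citing Scott, and your write-up supplies exactly the classical proof that citation points to. Both halves are sound: the backward direction by pushing each clause of normality through $j$ and testing membership of $\kappa$ (including the small but necessary observations $j(\alpha)=\alpha$ for $\alpha<\kappa$ and $f^{-1}\{\gamma\}\neq\emptyset$), and the forward direction via the ultrapower $\Ult(V,U)$, with well-foundedness from countable completeness and the key computation $[\mathrm{id}]_U=\kappa$ — correctly identified as the step where normality, rather than mere $\kappa$-completeness, is used to recover $U$ as $\{X \subseteq \kappa \mid \kappa \in j(X)\}$.
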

Throughout this paper, I will try to replace (as much as possible) the combinatorial definitions of the objects involved, such as measures and extenders, with properties of elementary embeddings. This fits better with the Bukovak\'y-Dehornoy theorem as well as will some recent works of Merimovich, for example \cite{Merimovich2011}. Nevertheless, we need to know that all our elementary embeddings have combinatorial definition. This is important in order to be able to iterated them as well as to control their continuity points. 

\begin{definition}[Kunen, \cite{Kunen1970}]
Let $U$ be a $\kappa$-complete measure in a model $V$. Let us define by induction an iteration of the ultrapower embedding. 
\[\begin{matrix}
V &=& M_0 & j_{0,0} = id \\  
M_{n + 1} &=& \Ult(M_n, j_n(U)), & j_{n,n+1} \colon M_n \to M_{n+1}   \\ 
&\vdots& \\ 
M_\omega & =& \lim \langle M_n, j_{n,m} \mid n \leq m < \omega\rangle  & j_{n,\omega} \colon M_n \to M_\omega 
\end{matrix}\]
Where $\lim$ indicates the direct limit of the directed system of embeddings.
\end{definition} 
\begin{theorem}[Gaifman, \cite{Gaifman}]
$M_\omega$ is well founded.
\end{theorem}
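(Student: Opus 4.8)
The plan is to argue by contradiction: assuming $M_\omega$ is ill founded, I will extract from an infinite $\in$-descending chain in $M_\omega$ an honest infinite $\in$-descending chain of sets in $V$, contradicting the axiom of Foundation. The engine is the normal form for iterated ultrapowers together with the countable completeness of $U$ (recall a normal measure is $\kappa$-complete, and $\kappa$ is uncountable, so $U$ is countably complete). Write $\kappa_n = j_{0,n}(\kappa) = \crit(j_{n,n+1})$ for the critical sequence. Since each $\kappa_m$ with $m < n$ is fixed by $j_{n,\omega}$, the finite-stage Gaifman normal form carries over to the limit: every element of $M_\omega$ can be written as $j_{0,\omega}(f)(\kappa_{n_0}, \ldots, \kappa_{n_{k-1}})$ for some $f \in V$ with domain $[\kappa]^k$ and some $n_0 < \cdots < n_{k-1}$. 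This is a purely combinatorial fact about the direct limit and does not presuppose well-foundedness of $M_\omega$.

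First I would record the \L os-style translation. For a finite iteration, $M_n = \Ult(V, U^{(n)})$, where $U^{(n)}$ is the $n$-fold Fubini power of $U$, a measure concentrating on $[\kappa]^n$, and $j_{0,n}(f)(\kappa_0, \ldots, \kappa_{n-1})$ is the class of $f$; thus for every formula $\varphi$,
\[
M_n \models \varphi\big(j_{0,n}(f)(\kappa_0,\ldots,\kappa_{n-1})\big) \iff \{\vec\xi \in [\kappa]^n : V \models \varphi(f(\vec\xi))\} \in U^{(n)}.
\]
Given a hypothetical descending chain $\langle x_i : i<\omega\rangle$ in $M_\omega$, for each $i$ I choose a common stage $n_i$ at which both $x_i$ and $x_{i+1}$ are represented, so that $x_i = j_{0,\omega}(f_i)(\kappa_0,\ldots,\kappa_{n_i-1})$ (a function ignoring some coordinates after padding). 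Since the $j_{n,\omega}$ are elementary and injective, $x_{i+1} \in^{M_\omega} x_i$ is decided already in $M_{n_i}$, so by the displayed equivalence the set $B_i = \{\vec\xi \in [\kappa]^{n_i} : f_{i+1}(\vec\xi) \in f_i(\vec\xi)\}$ lies in $U^{(n_i)}$.

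The crux is then a threading lemma: given countably many sets $B_i \in U^{(n_i)}$, there is a single strictly increasing sequence $\langle \xi_m : m<\omega\rangle \in {}^{\omega}\kappa$ with $(\xi_0,\ldots,\xi_{n_i-1}) \in B_i$ for every $i$. I would build the $\xi_m$ by recursion: unwinding the Fubini products, each $B_i$ imposes on the choice of $\xi_m$, given $\xi_0,\ldots,\xi_{m-1}$, the requirement that $\xi_m$ land in a certain $U$-large section of $\kappa$; at stage $m$ there are only countably many such requirements (one per relevant $i$), so by countable completeness of $U$ their intersection is in $U$, hence unbounded in $\kappa$, and the construction can continue with $\xi_m > \xi_{m-1}$. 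This diagonalization through $\omega$ many measure-one constraints is exactly where countable completeness is indispensable, and it is the main obstacle in the argument.

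Finally, feeding such a thread back through the equivalence gives $f_{i+1}(\xi_0,\ldots,\xi_{n_i-1}) \in f_i(\xi_0,\ldots,\xi_{n_i-1})$ in $V$ for every $i$, i.e.\ an infinite $\in$-descending sequence of sets of $V$, contradicting Foundation. Hence no descending chain exists and $M_\omega$ is well founded. Beyond the threading step, I expect the only delicate points to be the bookkeeping in the normal form: matching the critical-point coordinates used by consecutive $x_i$, which is handled by padding to a common stage and by the indiscernibility of the critical sequence, the latter being what makes $U^{(n)}$ depend only on $n$.
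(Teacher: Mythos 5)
Your argument is correct: it is the classical Gaifman proof of well-foundedness of the $\omega$-th iterate --- represent a hypothetical $\in$-descending chain by functions via the finite-stage normal form and the Fubini powers $U^{(n)}$, then use countable completeness of $U$ to thread a single $\omega$-sequence through all the resulting measure-one sets, producing a genuine descending $\in$-chain in $V$. The paper states this theorem with only a citation to Gaifman and gives no proof, so there is nothing to compare against; your write-up correctly identifies the threading lemma as the crux and the padding/compatibility of representatives as the only bookkeeping issue, and both are handled adequately.
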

Let us assume that $U$ is normal, for simplicity. Let $\kappa_n = j_n(\kappa)$ for $n \leq \omega$. Let $P = \langle \kappa_n \mid n < \omega\rangle$.

Let us look at $M_\omega[P]$ --- the least $\ZFC$-model containing $M_\omega$ and $P$. This model can be defined by $\bigcup_{\alpha \in \beta \in \Ord} L_{\beta}(M_\alpha \cup \{P\})$, but as the following theorems will show us, it can also be defined without referring to the $L(A)$ construction. We will show that $M_\omega$ and $M_\omega[P]$ have the same cardinals and $\kappa_\omega$ is regular in $M_\omega$ and singular in $M_\omega[P]$.

Let us start with a few simple facts about iterated ultrapowers by a measure on $\kappa$.
\begin{claim}
\begin{enumerate}
\item For every element $x \in M_n$, there is $f \in V$, $f \colon \kappa^n \to V$ such that $x = j_n(f)(\kappa_0,\dots, \kappa_{n-1})$. 
\item For every $x \in M_\omega$ there is $n < \omega$ and $\bar{x} \in M_n$ such that $x = j_{n,\omega}(\bar x)$. In particular, there is $f \colon \kappa^n \to V$ such that $x = j_\omega(f)(\kappa_0,\dots, \kappa_{n-1})$.
\item $\crit j_{n,m} = \kappa_n$ for all $n < m \leq \omega$.
\item $\kappa_\omega = \sup \kappa_n$. 
\end{enumerate}
\end{claim}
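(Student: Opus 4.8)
The plan is to establish the four parts in order, since each feeds into the next. These are standard facts about linear iterations of a single measure, so the strategy is to unwind the definitions of the direct limit and the ultrapower representation of elements.

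\emph{Part (1).} I would prove this by induction on $n$. For $n = 0$ it is trivial with $f$ constant. For the inductive step, recall that every element of $\Ult(M_n, j_n(U))$ has the form $j_{n,n+1}(g)(\kappa_n)$ for some $g \in M_n$ with domain $\kappa_n$, by the standard \L o\'s-style representation of ultrapower elements (using that $\kappa_n = j_n(\kappa)$ is the generator). Applying the inductive hypothesis to $g \in M_n$ gives $g = j_n(h)(\kappa_0,\dots,\kappa_{n-1})$ for some $h \colon \kappa^n \to V$ in $V$, and I would then combine this with the application at $\kappa_n$ to produce a single function $f \colon \kappa^{n+1} \to V$ with $x = j_{n+1}(f)(\kappa_0,\dots,\kappa_n)$. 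The bookkeeping here is matching up $j_{n,n+1} \circ j_n = j_{n+1}$ and currying the two-stage application into one function of $n+1$ variables; this is the main routine obstacle, essentially the uniqueness/commutativity of the iteration diagram.

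\emph{Part (2).} The key point is that $M_\omega$ is the \emph{direct} limit, so every $x \in M_\omega$ is $j_{n,\omega}(\bar x)$ for some $n < \omega$ and some $\bar x \in M_n$ — this is exactly what the direct limit construction provides. Writing $\bar x = j_n(f)(\kappa_0,\dots,\kappa_{n-1})$ by Part (1), and using that $j_{n,\omega}$ fixes $\kappa_0,\dots,\kappa_{n-1}$ (which requires Part (3)), I get $x = j_\omega(f)(\kappa_0,\dots,\kappa_{n-1})$ after applying $j_{n,\omega}$ and commuting it past $j_n$ to obtain $j_\omega$.

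\emph{Parts (3) and (4).} For Part (3) I would argue that $\crit j_{n,n+1} = \kappa_n$ because $j_{n,n+1}$ is the ultrapower embedding by $j_n(U)$, a measure on $\kappa_n$ in $M_n$. For $m > n+1$ the critical point of the composite $j_{n,m}$ is still $\kappa_n$: each later embedding $j_{m',m'+1}$ has critical point $\kappa_{m'} > \kappa_n$ and hence fixes $\kappa_n$, while $\kappa_n$ is genuinely moved by $j_{n,n+1}$; I would note $j_{n,n+1}(\kappa_n) = \kappa_{n+1} > \kappa_n$ and that the $\kappa_n$ form a strictly increasing sequence. The limit case $m = \omega$ follows since $j_{n,\omega} = j_{m,\omega} \circ j_{n,m}$ moves $\kappa_n$ and fixes everything below. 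For Part (4), I would show $\kappa_\omega = j_\omega(\kappa)$ equals $\sup_n \kappa_n$: the inequality $\kappa_\omega \geq \sup_n \kappa_n$ is clear since each $j_{n,\omega}(\kappa_n) = \kappa_\omega$ forces $\kappa_\omega > \kappa_n$ for all $n$, and for the reverse I would take an arbitrary $\gamma < \kappa_\omega = j_\omega(\kappa)$, use Part (2) to write $\gamma = j_\omega(f)(\kappa_0,\dots,\kappa_{n-1})$ for some $n$ and some $f \colon \kappa^n \to \kappa$, and observe that the value is then bounded by $j_{n,\omega}(\sup \range(j_n(f))) $, which lands below $\kappa_{n}$-many stages; more cleanly, since $\gamma = j_{n,\omega}(\bar\gamma)$ for some $\bar\gamma < \kappa$-image lying below $\kappa_n$ in $M_n$, and $j_{n,\omega}$ is continuous at points below its critical point $\kappa_n$, we get $\gamma < \kappa_n$. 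The only genuinely delicate step is verifying this continuity/boundedness claim underpinning Part (4), which reduces to the fact that $j_{n,\omega}$ has critical point $\kappa_n$ and therefore fixes every ordinal below $\kappa_n$.
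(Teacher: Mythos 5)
Your proof is correct and is exactly the standard argument: the paper states this claim without proof (treating it as a collection of simple folklore facts about linear iterations), and your induction for (1), the direct-limit representation for (2), and the critical-point/continuity-below-the-critical-point arguments for (3) and (4) are precisely what the paper implicitly relies on. The only cosmetic issue is the slightly garbled first attempt at the upper bound in Part (4); the clean version you give right after (writing $\gamma = j_{n,\omega}(\bar\gamma)$ with $\bar\gamma < \kappa_n = \crit j_{n,\omega}$, hence $\gamma = \bar\gamma < \kappa_n$) is the right argument.
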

\begin{definition}
An ordinal $\alpha$ is a \emph{continuity point} of an elementary embedding $j$, if $j(\alpha) = \sup j \image \alpha$. An ordinal $\alpha$ is a \emph{fixed point} of $j$, if $j(\alpha) = \alpha$.
\end{definition}
Since $j(\beta) \geq \beta$ for all $\beta$, every fixed point is a continuity point.
\begin{claim}\label{claim:continuity-points}
Let $n < \omega$. Every regular cardinal in $M_n$ which is not $\kappa_n$ is a continuity point of $j_{n,m}$ for all $n < m \leq \omega$.
\end{claim}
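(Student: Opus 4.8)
The plan is to reduce the statement to the one-step behaviour of each ultrapower map and then propagate continuity along the iteration. First I would isolate the single-step fact: if $W$ is a $\kappa$-complete measure on $\kappa$ in a model $N$ and $i\colon N \to \Ult(N,W)$ is the ultrapower embedding, then $i$ is continuous at every ordinal $\mu$ that is regular in $N$ and distinct from $\kappa$. For $\mu < \kappa$ this is trivial, since $\mu$ is then a fixed point of $i$ (as $\crit i = \kappa$), and every fixed point is a continuity point. For $\mu > \kappa$, any ordinal below $i(\mu)$ is represented by some $f\colon \kappa \to \mu$ with $f \in N$; since $\mu$ is regular in $N$ and $\kappa < \mu$, the range of $f$ is bounded, say below $\delta < \mu$, so the ordinal it represents lies below $i(\delta) \le \sup i\image\mu$. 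Hence $i(\mu) = \sup i\image\mu$.

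Next I would record that continuity composes: if $i_0\colon N_0 \to N_1$ is continuous at $\mu$ and $i_1 \colon N_1 \to N_2$ is continuous at $i_0(\mu)$, then $i_1 \circ i_0$ is continuous at $\mu$. This is a direct chase of suprema: given $\beta < (i_1\circ i_0)(\mu) = i_1(i_0(\mu))$, continuity of $i_1$ at $i_0(\mu) = \sup i_0\image\mu$ gives $\gamma < i_0(\mu)$ with $\beta < i_1(\gamma)$, and continuity of $i_0$ at $\mu$ gives $\alpha < \mu$ with $\gamma < i_0(\alpha)$, so $\beta < i_1(i_0(\alpha)) = (i_1\circ i_0)(\alpha) \le \sup(i_1\circ i_0)\image\mu$.

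Now fix $n < \omega$ and a regular cardinal $\lambda$ of $M_n$ with $\lambda \ne \kappa_n$. The key observation is that for every $k$ with $n \le k < \omega$ the ordinal $j_{n,k}(\lambda)$ is a regular cardinal of $M_k$ different from $\kappa_k$: regularity transfers through the elementary map $j_{n,k}$, and since $j_{n,k}(\kappa_n) = \kappa_k$ while $j_{n,k}$ is injective, $\lambda \ne \kappa_n$ yields $j_{n,k}(\lambda) \ne \kappa_k$. As $j_{k,k+1}$ is the ultrapower of $M_k$ by the $\kappa_k$-complete measure $j_{0,k}(U)$, with $\crit j_{k,k+1} = \kappa_k$, the single-step fact shows that $j_{k,k+1}$ is continuous at $j_{n,k}(\lambda)$. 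Inducting on $m$, with base case the identity $j_{n,n}$ and successor step supplied by the composition lemma, we obtain continuity of $j_{n,m}$ at $\lambda$ for every finite $m$ with $n \le m < \omega$.

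It remains to treat $m = \omega$, which I expect to be the only genuinely delicate point, since one must leave the finite system and argue in the direct limit. Given $\beta < j_{n,\omega}(\lambda)$, the representation of elements of $M_\omega$ from finite stages (the earlier claim) furnishes $k \ge n$ and an ordinal $\bar\beta \in M_k$ with $\beta = j_{k,\omega}(\bar\beta)$; from $j_{n,\omega}(\lambda) = j_{k,\omega}(j_{n,k}(\lambda))$ and order-preservation of $j_{k,\omega}$ we get $\bar\beta < j_{n,k}(\lambda)$. Continuity of $j_{n,k}$ at $\lambda$ from the finite case then yields $\alpha < \lambda$ with $\bar\beta < j_{n,k}(\alpha)$, whence $\beta = j_{k,\omega}(\bar\beta) < j_{k,\omega}(j_{n,k}(\alpha)) = j_{n,\omega}(\alpha) \le \sup j_{n,\omega}\image\lambda$. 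Therefore $j_{n,\omega}(\lambda) = \sup j_{n,\omega}\image\lambda$, and all cases of the claim are established.
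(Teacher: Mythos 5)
Your proof is correct, but it is organized differently from the paper's. The paper argues in one shot, for all $m \le \omega$ simultaneously: given $\alpha < j_{n,m}(\lambda)$, it invokes the representation of elements of the iterate by functions $f \colon \kappa_n^{k} \to \lambda$ in $M_n$ evaluated at critical points (items (1) and (2) of the preceding Claim, relativized to $M_n$), and then bounds $\sup \range f < \lambda$ by regularity of $\lambda$ and {\L}o\v{s}. You instead prove the one-step fact for a single $\kappa$-complete ultrapower, propagate it along the finite stages via a composition lemma for continuity points (checking that $j_{n,k}(\lambda)$ stays regular and distinct from $\kappa_k$), and then handle $m=\omega$ separately by pulling an ordinal below $j_{n,\omega}(\lambda)$ back to a finite stage of the direct limit. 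The kernel of both arguments is the same --- a function with domain of size $\kappa_n < \lambda$ into a regular $\lambda$ has bounded range --- but your decomposition is more self-contained (it does not presuppose the finite-support representation of elements of $M_m$ over $M_n$ for $m>n$, only the direct-limit property at $\omega$), at the cost of being longer; the paper's version is shorter precisely because it leans on that representation lemma, which it has already recorded. Both are complete; your treatment also makes explicit the point the paper glosses over, namely that the $m=\omega$ case requires descending to a finite stage.
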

\begin{proof}
Let $j = j_{n,m}$.
If $\lambda < \kappa_n = \crit j$, then it is a fixed point. Otherwise, let $\alpha < j(\lambda)$. Then, there is a function $f\in M_n$ from some finite power of $\kappa_n$ to $\lambda$ representing $\alpha$. But then, taking $\gamma = \sup \range f < \lambda$, $\alpha < j(\gamma)$ by {\L}o\v{s} theorem.
\end{proof}

The embeddings $j_{n,\omega}$ as well as the models $M_m$ for $m \geq n$ are definable in $M_n$. In particular, $M_\omega \subseteq M_n$ as a definable subclass and $P\in M_n$. Therefore, $M_\omega[P] \subseteq M_n$. We conclude that: 
\begin{lemma}\label{lemma:BD-easy-direction}
$M_{\omega}[P] \subseteq \bigcap M_n$.
\end{lemma}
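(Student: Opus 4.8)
The plan is to fix an arbitrary $n < \omega$ and prove the stronger statement $M_\omega[P] \subseteq M_n$; the lemma then follows by intersecting over all $n$. The entire argument rests on the observation that, from the point of view of $M_n$, the rest of the iteration is an internal object that it can reconstruct on its own.

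First I would verify that the tail of the iteration is definable inside $M_n$. The model $M_n$ satisfies $\ZFC$ and contains the measure $U_n := j_n(U) = j_{0,n}(U)$, which it sees as a normal measure on $\kappa_n$. Performing the iterated ultrapower of the universe of $M_n$ by $U_n$ \emph{within} $M_n$ yields a sequence of models $N_0 = M_n, N_1, N_2, \dots$ with a direct limit $N_\omega$ and the associated embeddings; by Gaifman's theorem, relativised to $M_n$, this direct limit is well founded. The key claim is that this internally computed tail coincides with the externally computed one, i.e. $N_k = M_{n+k}$ for all $k \leq \omega$ (so in particular $N_\omega = M_\omega$) and the internal embeddings agree with $j_{n+k,\,n+k'}$. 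This is proved by induction on $k$, using that each $M_{n+k}$ is a definable subclass of $M_n$ with $j_{n+k}(U) \in M_n$, and that the single ultrapower step $\Ult(M_{n+k}, j_{n+k}(U))$ together with {\L}o\v{s}'s theorem is absolute between $V$ and $M_n$: the equivalence classes are formed only from functions lying in $M_{n+k}$, so the construction does not depend on the ambient universe.

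Granting this, $M_\omega$ is a definable inner model of $M_n$, whence $M_\omega \subseteq M_n$. Next I would check that $P \in M_n$. For $m \geq n$ the ordinal $\kappa_m$ is the image of $\kappa_n$ under the internal $(m-n)$-fold iteration, hence computable in $M_n$; the finite initial segment $\langle \kappa_0, \dots, \kappa_{n-1}\rangle$ is a finite sequence of ordinals below $\kappa_n$ and so certainly lies in $M_n$. Concatenating, the function $m \mapsto \kappa_m$ is definable in $M_n$, so $P \in M_n$.

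Finally, since $M_n$ is a model of $\ZFC$ containing $M_\omega$ as a subclass and $P$ as an element, the minimality of $M_\omega[P]$ among $\ZFC$-models extending $M_\omega$ and containing $P$ gives $M_\omega[P] \subseteq M_n$. As $n$ was arbitrary, $M_\omega[P] \subseteq \bigcap_n M_n$. The main obstacle is the coherence step in the second paragraph: making precise that the externally defined iteration restricts to the iteration that $M_n$ builds internally, which amounts to the standard but slightly delicate fact that iterated ultrapowers are absolute and that the tail of an iteration is itself an iteration.
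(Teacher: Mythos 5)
Your proof is correct and follows the same route as the paper: the paper likewise observes that the models $M_m$ for $m \geq n$ and the embeddings $j_{n,\omega}$ are definable in $M_n$, so that $M_\omega \subseteq M_n$ as a definable subclass and $P \in M_n$, whence $M_\omega[P] \subseteq M_n$ by minimality. You have simply spelled out the absoluteness of the tail iteration in more detail than the paper does.
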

From this lemma, let us conclude that $M_\omega$ and $M_\omega[P]$ have the same cardinals.
\begin{lemma}
$M_\omega[P] \cap V_{\kappa_\omega} \subseteq M_\omega$.
\end{lemma}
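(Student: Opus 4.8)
The plan is to combine the ``easy direction'' already established with the elementary fact that an embedding is the identity on sets of rank below its critical point. By Lemma \ref{lemma:BD-easy-direction} we already know $M_\omega[P] \subseteq \bigcap_{n} M_n$, so the only genuinely new content is that an element of $M_\omega[P]$ of sufficiently small rank does not merely lie in every $M_n$ but actually lands in $M_\omega$ itself.

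So I would fix an arbitrary $x \in M_\omega[P] \cap V_{\kappa_\omega}$ and aim to show $x \in M_\omega$. First I would use absoluteness of rank between transitive models of $\ZF$: since $x \in V_{\kappa_\omega}$ we have $\rank(x) < \kappa_\omega$, and because $\kappa_\omega = \sup_n \kappa_n$ (part (4) of the Claim above) there is some $n < \omega$ with $\rank(x) < \kappa_n$. Fix this $n$. By the easy direction $x \in M_n$, and since $M_n$ is transitive the rank of $x$ computed inside $M_n$ is the same ordinal, so $x \in V_{\kappa_n}^{M_n}$.

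Next I would invoke that $\crit j_{n,\omega} = \kappa_n$ (part (3) of the Claim above) together with the standard lemma that an elementary embedding with critical point $\mu$ fixes every set of rank $< \mu$; this is proved by $\in$-induction, using that such a set has cardinality $< \mu$, so that the image of the set coincides with its pointwise image. Applying this to $j_{n,\omega} \colon M_n \to M_\omega$ and to $x$, whose rank lies below $\crit j_{n,\omega} = \kappa_n$, yields $j_{n,\omega}(x) = x$, and hence $x = j_{n,\omega}(x) \in \range(j_{n,\omega}) \subseteq M_\omega$, as desired.

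I do not expect a real obstacle here: once Lemma \ref{lemma:BD-easy-direction} is in hand the argument is immediate. The only mildly delicate points are the two appeals to absoluteness of rank and to the ``critical point fixes low-rank sets'' lemma, both of which are entirely routine; the single thing worth being careful about is to extract \emph{one} index $n$ that simultaneously witnesses $x \in M_n$ and $\rank(x) < \kappa_n$ before pushing $x$ through the tail embedding $j_{n,\omega}$.
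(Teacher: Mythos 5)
Your proposal is correct and is essentially identical to the paper's proof: the paper also picks $n$ with $\kappa_n > \rank X$, notes $X \in M_n$ by the easy-direction inclusion, and concludes $X = j_{n,\omega}(X) \in M_\omega$ since the rank of $X$ is below $\crit j_{n,\omega} = \kappa_n$. Your extra remarks on rank absoluteness and on why low-rank sets are fixed are just the routine details the paper leaves implicit.
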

\begin{proof}
Pick $X \in M_\omega[P] \cap V_{\kappa_\omega}$ and let $n$ be large enough so that $\kappa_n > \rank X$. Then, $X \in M_n$ and therefore $X = j_{n,\omega}(X) \in M_\omega$. 
\end{proof}
\begin{lemma}\label{lemma:preserving-cardinals-Prikry}
If $\lambda \neq \kappa_\omega$ is a regular cardinal in $M_\omega$, then it remains regular in $M_\omega[P]$.
\end{lemma}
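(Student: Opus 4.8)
The plan is to argue by contradiction and to \emph{reflect} a putative singularizing function back along the iteration into one of the models $M_n$, where it will contradict the regularity of a suitable preimage of $\lambda$. So suppose $\lambda \neq \kappa_\omega$ is regular in $M_\omega$ but $\cf^{M_\omega[P]}(\lambda) = \mu < \lambda$, and fix a strictly increasing cofinal map $f \colon \mu \to \lambda$ in $M_\omega[P]$; by Lemma \ref{lemma:BD-easy-direction}, $f \in M_n$ for every $n$. Before reflecting I would record two preliminary observations. First, $\mu$ is itself regular in $M_\omega$: since $M_\omega \subseteq M_\omega[P]$, a cofinal map witnessing singularity of $\mu$ in $M_\omega$ would survive into $M_\omega[P]$, contradicting that the cofinality $\mu$ is regular there. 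Second, $\mu \neq \kappa_\omega$, because $P$ is a cofinal $\omega$-sequence in $\kappa_\omega$ lying in $M_\omega[P]$, so $\kappa_\omega$ has cofinality $\omega$ there and cannot itself be a cofinality.

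Next I would locate continuity points of $j_{n,\omega}$ over $\lambda$ and over $\mu$ simultaneously. For $\lambda$: if $\lambda > \kappa_\omega$, use the basic claim on iterated ultrapowers (item (2)), that every element of $M_\omega$ lies in the range of some $j_{n,\omega}$, to fix $n$ and $\lambda_n \in M_n$ with $\lambda = j_{n,\omega}(\lambda_n)$; if $\lambda < \kappa_\omega$, instead choose $n$ with $\kappa_n > \lambda$ and set $\lambda_n = \lambda$, a fixed point. In either case elementarity makes $\lambda_n$ a regular cardinal of $M_n$ with $\lambda_n \neq \kappa_n$ (since $j_{n,\omega}(\kappa_n) = \kappa_\omega \neq \lambda$) and $j_{n,\omega}(\lambda_n) = \lambda$, so by Claim \ref{claim:continuity-points} it is a continuity point: $\lambda = \sup j_{n,\omega}\image \lambda_n$. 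Applying the same dichotomy to $\mu$ and enlarging $n$ yields $\mu_n$, a regular cardinal of $M_n$ with $\mu_n \neq \kappa_n$, $j_{n,\omega}(\mu_n) = \mu$, and $\mu = \sup j_{n,\omega}\image \mu_n$; moreover $\mu_n < \lambda_n$ because $j_{n,\omega}$ is order preserving and $\mu < \lambda$.

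Then I would define $\bar f \colon \mu_n \to \lambda_n$ inside $M_n$ by letting $\bar f(\bar\alpha)$ be the least $\xi < \lambda_n$ with $f(j_{n,\omega}(\bar\alpha)) < j_{n,\omega}(\xi)$. This is well defined because $j_{n,\omega}(\bar\alpha) < \mu < \lambda = \sup j_{n,\omega}\image \lambda_n$, and $\bar f \in M_n$ since both $f$ and the definable embedding $j_{n,\omega}$ are available in $M_n$. The verification that $\bar f$ is cofinal is the crux: were its range bounded by some $\zeta < \lambda_n$, then $f$ would send the set $j_{n,\omega}\image \mu_n$ below $j_{n,\omega}(\zeta) < \lambda$; but $j_{n,\omega}\image \mu_n$ is cofinal in $\mu$ by continuity at $\mu_n$, and $f$ is increasing and cofinal in $\lambda$, so $f$ is unbounded on every cofinal subset of $\mu$, a contradiction. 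Hence $\bar f$ is a cofinal map from $\mu_n$ into $\lambda_n$ with $\mu_n < \lambda_n$, contradicting the regularity of $\lambda_n$ in $M_n$.

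I expect the main obstacle to be the case in which the cofinality $\mu$ is itself at least $\kappa_\omega$: there $\mu$ cannot simply be fixed below a critical point, and one must first argue that it is regular in $M_\omega$ and then reflect it through its own continuity point of $j_{n,\omega}$ in tandem with $\lambda$, taking a single $n$ that works for both. By contrast, the case $\lambda < \kappa_\omega$ is essentially immediate: there $\lambda = \lambda_n$ is fixed and already regular in $M_n$, so the presence of $f$ in $M_n$ directly contradicts its regularity, and the lemma on $M_\omega[P] \cap V_{\kappa_\omega}$ is not even needed.
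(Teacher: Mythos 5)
Your proof is correct and follows essentially the same route as the paper's: both rest on $M_\omega[P]\subseteq\bigcap M_n$ (Lemma \ref{lemma:BD-easy-direction}) to place the witnessing cofinal map in $M_n$, on Claim \ref{claim:continuity-points} applied simultaneously to the preimages of $\lambda$ and of its $M_\omega[P]$-cofinality, and on the observation that this cofinality cannot be $\kappa_\omega$ since $\kappa_\omega$ is singular in $M_\omega[P]$. The only difference is presentational: the paper computes directly that $\bar\rho=\cf^{M_n}(\rho)=\cf^{M_n}(\lambda)=\bar\lambda$, whereas you reach the same contradiction by explicitly pulling the map $f$ back to a cofinal $\bar f\colon\mu_n\to\lambda_n$ in $M_n$.
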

\begin{proof}
Let $n < \omega$ be large enough and let $\bar{\lambda}$ be the pre-image of $\lambda$ under $j_{n,\omega}$ and let $\bar\rho$ be the pre-image of $\rho = \cf^{M_\omega[P]}(\lambda)$. So $\bar\lambda$ is regular in $M_n$ and $\bar\rho \leq \bar \lambda$. Note that $\rho \neq \kappa_\omega$, which is singular in $M_\omega[P]$ and therefore $\bar\rho\neq \kappa_n$.

Since $\bar\lambda, \bar \rho$ are regular cardinals in $M_n$, and different than $\kappa_n$, by Lemma \ref{claim:continuity-points} they are continuity points of the embedding $j_{n,\omega}$:
\[\lambda = j_{n,\omega}(\bar\lambda) = \sup \{j_{n,\omega}(\alpha) \mid \alpha < \bar \lambda\},\] 
\[\rho= j_{n,\omega}(\bar\rho) = \sup \{j_{n,\omega}(\alpha) \mid \alpha < \bar \rho\}.\] 

Thus, 
\[\bar{\rho} =\cf^{M_n} (\rho) = \cf^{M_n} (\cf^{M_\omega[P]} \lambda) = \cf^{M_n} \lambda = \bar\lambda\]
\end{proof}
So, we conclude that $M_\omega \subseteq M_\omega[P]$ have the same cardinals and $P$ witness the singularity of $\kappa_\omega$ in $M_{\omega}[P]$. 

We would like to show that $M_\omega[P]$ is a generic extension of $M_\omega$ (without pointing on an explicit forcing notion), and get some general information about the properties of the forcing. 
\begin{lemma}\label{lemma:chain-condition-Prikry}
$M_\omega[P]$ is a generic extension of $M_\omega$ using a $\kappa_\omega^+$-c.c.\ forcing notion.
\end{lemma}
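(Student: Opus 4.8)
The plan is to avoid exhibiting a forcing notion directly and instead invoke Bukovsk\'y's characterization of set-generic extensions: if $N_0 \subseteq N_1$ are transitive models of $\ZFC$ with the same ordinals, then $N_1$ is a generic extension of $N_0$ by a $\theta$-c.c.\ forcing precisely when the pair has the $\theta$-covering property, i.e.\ every $f \colon \gamma \to \Ord$ with $f \in N_1$ is dominated by some $F \in N_0$ with $\dom F = \gamma$, $f(\xi) \in F(\xi)$, and $|F(\xi)|^{N_0} < \theta$ for all $\xi < \gamma$ (see the discussion of these phenomena in \cite{FuchsHamkins}). Applying this with $N_0 = M_\omega$, $N_1 = M_\omega[P]$ and $\theta = \kappa_\omega^+$, it suffices to verify the $\kappa_\omega^+$-covering property; a single such verification yields \emph{both} that $M_\omega[P]$ is a generic extension and that the forcing is $\kappa_\omega^+$-c.c., without our ever isolating the forcing as an object.

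The engine of the covering argument is a Mathias-style tail-containment fact for the critical sequence. Let $U_\omega = j_{0,\omega}(U)$, the normal measure of $M_\omega$ on $\kappa_\omega$. I claim that for every $A \in U_\omega$ with $A \in M_\omega$ one has $\kappa_n \in A$ for all but finitely many $n$. Indeed, by the representation of elements of $M_\omega$ (the second clause of the Claim above) $A = j_{m,\omega}(\bar A)$ for some $m < \omega$ and $\bar A \in M_m$, and by elementarity $\bar A \in j_{0,m}(U)$. For $n \geq m$ the set $j_{m,n}(\bar A)$ lies in $j_{0,n}(U)$, so $\kappa_n = \crit j_{n,n+1} \in j_{n,n+1}(j_{m,n}(\bar A)) = j_{m,n+1}(\bar A)$; applying $j_{n+1,\omega}$, which fixes $\kappa_n$ since $\kappa_n < \kappa_{n+1} = \crit j_{n+1,\omega}$, gives $\kappa_n \in j_{m,\omega}(\bar A) = A$. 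Thus $P$ diagonalises the filter $U_\omega$ over $M_\omega$.

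With this in hand I would build the cover coordinatewise. Fix $f \colon \gamma \to \Ord$ in $M_\omega[P]$; by Lemma \ref{lemma:BD-easy-direction} we have $f \in \bigcap_n M_n$, and there are a formula $\varphi$ and a parameter $a \in M_\omega$ with $f(\xi) = \eta$ iff $M_\omega[P] \models \varphi(\xi, \eta, a, P)$ for all $\xi, \eta$. The point to establish is a \emph{finite-dependence} (Prikry) property: for each $\xi$ there are $s \in [\kappa_\omega]^{<\omega}$ and $B \in U_\omega$ such that all $\omega$-sequences $t$ extending $s$ with $t(k) \in B$ for $k \geq |s|$ assign the same value to $f(\xi)$; that is, the stem $s$ together with the measure-one set $B$ already decides $\varphi(\xi, \cdot, a, \cdot)$. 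Granting this, set $F(\xi)$ to be the set of values so decided as $s$ ranges over $[\kappa_\omega]^{<\omega}$; this assignment is definable in $M_\omega$ from $a$, and since $|[\kappa_\omega]^{<\omega}|^{M_\omega} = \kappa_\omega$ we obtain $|F(\xi)|^{M_\omega} \leq \kappa_\omega < \kappa_\omega^+$. The tail-containment fact of the previous paragraph guarantees that the stem $s = P \restriction |s|$ together with $B$ is compatible with $P$, so the value decided is $f(\xi)$ itself, whence $f(\xi) \in F(\xi)$ and $F$ covers $f$.

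I expect the main obstacle to be the finite-dependence property, which is exactly the Prikry property phrased without reference to a forcing notion. The plan is to derive it from the iterated-ultrapower picture rather than from a forcing analysis: the critical points $\langle \kappa_n \mid n < \omega\rangle$ behave as indiscernibles for $M_\omega$ relative to $U_\omega$, so any first-order assertion about them reflects, via {\L}o\v{s}'s theorem applied along the tail of the iteration, to a $U_\omega$-large set of coordinates; shrinking finitely many times stabilises the truth value and isolates the dependence on a finite stem. Making this reflection uniform in $\xi$, so that the resulting $F$ is a genuine function of $M_\omega$, is the delicate step, and it is precisely there that the homogeneity of the construction and the tail-containment fact must be combined.
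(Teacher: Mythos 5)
Your reduction to Bukovsk\'y's characterization via the $\kappa_\omega^+$-global covering property is exactly the paper's first move, and your Mathias-style tail-containment observation is correct. But the heart of your covering argument rests on a ``finite-dependence'' claim --- that for each $\xi$ some stem $s$ and measure-one set $B \in U_\omega$ already decide the value $f(\xi)$ --- which you acknowledge you have not proved. This is precisely the Prikry property, and it is the genuinely hard part of any forcing-free treatment; the sketch ``indiscernibility plus {\L}o\v{s} along the tail of the iteration'' does not yet constitute a proof (in particular, stabilising the truth value requires an integration/diagonal-intersection argument over all stems, and making the whole thing uniform in $\xi$ so that $F \in M_\omega$ is exactly the point where the sketch stops). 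As it stands the proposal has a genuine gap at its central step.

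The gap is also avoidable: the covering property does not need the Prikry property at all, and the paper establishes it for the a priori larger model $\bigcap_n M_n$ by a pure \emph{width} argument. Given $f \in \bigcap_n M_n$, for each $n$ one pulls $f$ back along $j_{n,\omega}$ to a partial function $f_n \in M_n$ (this uses only that $j_{n,\omega}$ and $M_\omega$ are definable classes of $M_n$), represents $f_n$ as $j_n(g_n)(\kappa_0,\dots,\kappa_{n-1})$ with $g_n \colon \kappa^n \to V$, and then sets $\bar g(\zeta)$ to be the collection of \emph{all} values $g_n(\eta)(\zeta)$ as $n$ ranges over $\omega$ and $\eta$ over $\kappa^n$. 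Since there are only $\kappa$ many pairs $(n,\eta)$, the set $\bar g(\zeta)$ has size at most $\kappa$, and $g = j_\omega(\bar g)$ covers $f$ coordinatewise with $|g(\zeta)|^{M_\omega} \leq \kappa_\omega$. No decision of values is ever needed --- one simply throws in every candidate value arising from a representing function. If you replace your finite-dependence step with this collection-of-all-candidates device, your argument closes; otherwise you must actually supply a proof of the Prikry property, which is a substantially harder task than the lemma itself.
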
 
\begin{proof}
By Bukovsk\'y Theorem, \cite{Bukovsky1973}, this is equivalent to the following statement: 
For every $f \colon \alpha \to \lambda$, $f \in M_\omega[P]$, there is $g \colon \alpha \to \power(\lambda)$ in $M_\omega$ such that $\forall \zeta < \alpha, f(\zeta) \in g(\zeta)$ and $|g(\zeta)| \leq \kappa_\omega$. 

Using Lemma \ref{lemma:BD-easy-direction}, we know that $M_\omega[P] \subseteq \bigcap M_n$. Let $f \in \bigcap M_n$ be a function from $\alpha$ to $\lambda$ for some ordinals $\alpha, \lambda$. Let us define for each $n$ a function $f_n$ such that if $\zeta = j_{n,\omega}(\bar\zeta), f(\zeta) = j_{n,\omega}(\bar\xi)$ we let $f_n(\bar\zeta) = \bar\xi$ (so $f_n$ is a partial function).

Let $g_n \colon \kappa^n \to V$ represent $f_n$, so $j_n(g_n)(\kappa_0, \dots, \kappa_{n-1}) = f_n$. Finally, let:
\[\bar g (\zeta) = \{g_n(\eta)(\zeta) \mid n < \omega, \eta \in \kappa^n, \zeta \in \dom g(\eta)\},\]     
and let $g = j_\omega(\bar g)$. Let us verify that for every $\zeta < \alpha$, $f(\zeta) \in g(\zeta)$. 

Let $n$ be large enough to that $\zeta, f(\zeta)$ are in the range of $j_{n,\omega}$ and in particular the pre-image of $\zeta$ under $j_{n,\omega}$ is in the domain of $f_n$. In particular, 
\[f(\zeta) = j_{n,\omega}(f_n(\bar\zeta)) \in j_{n,\omega}(\{g_n(\eta)(\zeta) \mid \eta \in j_n(\kappa)^n\})\subseteq j_\omega(\bar g)(\zeta).\]

Since $|\bar{g}(\zeta)| \leq \kappa$ for all $\zeta$, the result follows.
\end{proof}

\begin{remark}\label{remark:chain-condition-from-width}
Let us consider an arbitrary iteration of measures, so we let $V = M_0$ and $M_{n+1} = \Ult(M_n, \mathcal{V}_n)$, with a direct limit $M_\omega$. Any model of $\ZFC$ between $M_\omega$ and $\bigcap M_n$ is a $j_{\omega}(\lambda)$-c.c.\ extension of $M_\omega$, where $\lambda$ bounds the size of the sets of the measures. More precisely, if the \emph{width} of the embedding $j_n$ is $<j_n(\lambda)$, then the forcing is $j_\omega(\lambda)$-c.c.\footnote{Recall that the width of an elementary embedding $j \colon M \to N$ between two models of $\ZFC$ is $\leq\mu$ iff for every $x \in N$ there is a set $a \in M$ with $|a| \leq \mu$ such that $x \in j(a)$. Equivalently, if for every $x \in N$ there is $f \in M$, $|\dom f| \leq \mu$ and $a \in N$ such that $j(f)(a) = x$.}
\end{remark}

Note that while this approach allows us to compute the chain condition of the forcing quite easily, it is still unclear whether one can deduce that the size of the corresponding forcing is $\leq 2^{\kappa}$. 
\begin{question}
Can we show abstractly, without referring to the Prikry forcing, that $M_{\omega}[P]$ is a generic extension of $M_\omega$ using a forcing notion of cardinality $\leq 2^{\kappa_\omega}$?
\end{question}

So far our arguments only used the inclusion $M_\omega[P]\subseteq \bigcap M_n$, and applied absoluteness. In general, the intersection of models of $\ZFC$ does not have to satisfy even $\ZF$. In this case, the model $\bigcap M_n$ is definable in every $M_n$, so we can abstractly get more information, using the methods of Set Theoretic Geology, \cite[Lemma 21]{FuchsHamkinsReits2015}. 

Fortunately, we do not need to analyse this model as, surprisingly, the Bukovsk\'y-Dehornoy Theorem shows that the intersection model is the minimal possible model:
\begin{theorem}[Bukovsk\'{y}-Dehornoy, \cite{Bukovsky1977, Dehornoy1978}]\label{thm:BD-for-vanilla}
$\bigcap M_n = M_\omega[P]$.
\end{theorem}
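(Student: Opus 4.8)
The plan is to prove the nontrivial inclusion $\bigcap M_n \subseteq M_\omega[P]$, since Lemma \ref{lemma:BD-easy-direction} already gives the reverse containment. The strategy is to show that an arbitrary element $x \in \bigcap_n M_n$ can be reconstructed inside $M_\omega$ from the sequence $P = \langle \kappa_n \mid n < \omega\rangle$ together with parameters already living in $M_\omega$. Without loss of generality I would first reduce to the case where $x$ is a set of ordinals, say $x \subseteq \gamma$ for some ordinal $\gamma$, by the usual coding argument: if the claim holds for all bounded sets of ordinals it holds for all sets, since $M_\omega[P]$ is a model of $\ZFC$ and every set is coded by a set of ordinals via a well-ordering in $\bigcap M_n$. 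Fixing such an $x$, the key observation is that $x \in M_n$ for every $n$, so for each $n$ there is a function $h_n \colon \kappa^n \to \power(\gamma)$ in $V$ with $x = j_{n,\omega}(h_n)(\kappa_0,\dots,\kappa_{n-1})$, using part (1) of the Claim applied in $M_\omega$ after pulling back along $j_{n,\omega}$.

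The heart of the argument is a coherence/thinning step: I would show that these representing functions can be chosen so that the traces $h_n(\kappa_0,\dots,\kappa_{n-1})$ stabilize the membership of each fixed ordinal $\xi < \gamma$ in $x$ as $n$ grows. Concretely, for each $\xi < \gamma$ there is (by elementarity and the fact that $x$ is fixed across the $M_n$) a finite stage $n_\xi$ after which the question ``$\xi \in x$'' is decided uniformly: the set $\{(\alpha_0,\dots,\alpha_{n-1}) : \xi \in h_n(\alpha_0,\dots,\alpha_{n-1})\}$ is measured by the iterated measure, so by normality one can shrink to a measure-one set on which the answer is constant below the relevant ordinal. The generic sequence $P$ then selects, for each $\xi$, which measure-one set it threads, and this is exactly the information $M_\omega$ lacks but $P$ supplies. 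The construction assembles, inside $M_\omega$, a name $\dot{x}$ — a function reading off membership of $\xi$ in $x$ from a sufficiently long initial segment of $P$ — so that $x = \dot{x}[P]$ is computable in $M_\omega[P]$.

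I expect the main obstacle to be the \emph{uniformity} of the stabilization: one must argue that the stage $n_\xi$ and the relevant measure-one sets can be chosen by a single function lying in $M_\omega$, rather than separately for each $\xi$ in a way that escapes $M_\omega$. This is where the self-definability of the tail iteration inside each $M_n$ (the remark preceding Lemma \ref{lemma:BD-easy-direction}) is essential: because $M_\omega$ and all the embeddings $j_{n,\omega}$ are definable in $M_n$, the decoding procedure can be expressed by a formula with parameters in $M_\omega$, and $M_\omega$ can quantify over the possible initial segments of $P$. The subtlety is that $P \notin M_\omega$, so one cannot simply ``look up'' the answer; instead one shows that the map $\xi \mapsto (\text{the least } n \text{ at which } P{\restriction}n \text{ decides } \xi \in x)$ is total and that the decision, once made, is correct and $M_\omega$-recoverable. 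Once this reading-off map is shown to be definable in $M_\omega[P]$ from $P$ and finitely many ground-model parameters, we obtain $x \in M_\omega[P]$, completing the nontrivial inclusion and hence the equality $\bigcap M_n = M_\omega[P]$.
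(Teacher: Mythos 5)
There is a genuine gap, and you have in fact pointed at it yourself: the ``uniformity of the stabilization'' that you flag as the main obstacle is exactly the content of the proof, and your proposed resolution does not supply it. The self-definability of the tail iteration inside each $M_n$ only yields the easy inclusion $M_\omega[P]\subseteq\bigcap M_n$ (Lemma \ref{lemma:BD-easy-direction}); it says nothing about how $M_\omega$ or $M_\omega[P]$ gets hold of the countably many representing functions $h_n$, which a priori live only in $V$. The missing idea is the $\sigma$-closure of $M_\omega[P]$ under countable sequences of ordinals: given any $\langle\alpha_n\mid n<\omega\rangle$, one fixes in $V$ a sequence $\vec f=\langle f_n\mid n<\omega\rangle$ of representing functions with $\alpha_n=j_\omega(f_n)(\kappa_0,\dots,\kappa_{m_n-1})$, observes that $j_\omega(\vec f)=\langle j_\omega(f_n)\mid n<\omega\rangle\in M_\omega$, and evaluates at initial segments of $P$. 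Applying $j_\omega$ to the whole $V$-sequence of functions is the single move that transports the countable amount of $V$-information into $M_\omega$; nothing in your sketch performs it, and without it the ``name $\dot x$ assembled inside $M_\omega$'' is never actually constructed.

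Two further points. First, your representation is mis-stated: for $x\in\bigcap M_n$ the Claim gives $x=j_n(h_n)(\kappa_0,\dots,\kappa_{n-1})$ with $h_n\in V$ (not $j_{n,\omega}(h_n)$ evaluated after ``pulling back along $j_{n,\omega}$''; $x$ need not lie in the range of $j_{n,\omega}$ at all). Second, the measure-one thinning step does not work as described: for a fixed $\xi$ the set $\{\vec\alpha:\xi\in h_n(\vec\alpha)\}$ is indeed measured, but making the decision ``constant below the relevant ordinal'' for all $\xi<\gamma$ simultaneously requires intersecting $\gamma$ many measure-one sets, which $\kappa$-completeness does not permit once $\gamma\geq\kappa$ --- and the interesting sets $X$ are unbounded in $\kappa_\omega$ and beyond. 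The paper's decoding device avoids this entirely: set $Y_n=\{\alpha\mid j_{n,\omega}(\alpha)\in X\}\in M_n$ and $Z_n=j_{n,\omega}(Y_n)\in M_\omega$; then $\langle Z_n\mid n<\omega\rangle\in M_\omega[P]$ by the closure lemma, and $\beta\in X$ iff $\beta\in Z_n$ for all large $n$, since $\beta\in X\iff\beta\in Z_n$ whenever $\beta\in\range j_{n,\omega}$. Replacing your thinning step by this pull-back/push-forward approximation, and inserting the closure lemma, closes the argument.
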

We include the proof as its main ideas are going to repeat throughout the other, more complicated, cases.
\begin{proof}
The proof consists of two steps. In the first step we show that the model $M_\omega[P]$ is closed under countable sequences of ordinals. In the second step we use the elementary embeddings in order to approximate an arbitrary set of ordinals from $\bigcap M_n$ using a countable sequence of sets in $M_\omega$. From this approximation, $M_\omega[P]$ can compute $X$.

\begin{lemma}\label{lemma:countable-closure-vanilla-Prikry}
$M_\omega[P]$ is closed under countable sequences of ordinals.
\end{lemma}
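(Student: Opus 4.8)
The plan is to prove the sharper statement that every $\omega$-sequence of ordinals $s = \langle \alpha_n \mid n < \omega\rangle$ lying in $V = M_0$ already belongs to $M_\omega[P]$. Since such sequences are absolute between transitive models and $M_\omega[P] \subseteq \bigcap_n M_n \subseteq V$, this yields the desired closure. The idea is to represent each entry $\alpha_n$ by a ground model function evaluated at finitely many Prikry points, and then to exploit the fact that the index set $\omega$ lies below $\crit j_\omega = \kappa$, so that the whole sequence of representing functions is captured by a single application of $j_\omega$.

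First I would invoke part (2) of the Claim. Each $\alpha_n$, being an ordinal, lies in $M_\omega$, so there are $m_n < \omega$ and a function $f_n \colon \kappa^{m_n} \to \Ord$ in $V$ with
\[ \alpha_n = j_\omega(f_n)(\kappa_0, \dots, \kappa_{m_n - 1}). \]
Working in $V$ and using the axiom of choice there, I can select such data uniformly in $n$, so that the sequence $F := \langle f_n \mid n<\omega\rangle$ is itself a member of $V$.

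The key step is to push $F$ into $M_\omega$. Since $\dom F = \omega$ and $\crit j_\omega = \kappa > \omega$, the embedding acts pointwise on the index set: $j_\omega(F)$ is a sequence of length $j_\omega(\omega) = \omega$, and $j_\omega(F)(n) = j_\omega(f_n)$ for every $n < \omega$. Thus the sequence $\langle j_\omega(f_n) \mid n<\omega\rangle$ — which a priori need not be an element of $M_\omega$, being an $\omega$-sequence of elements of $M_\omega$ — is in fact equal to $j_\omega(F) \in M_\omega$. Note also that each arity $m_n$ is recovered inside $M_\omega$ as the dimension of $\dom j_\omega(f_n)$, so no further parameters are needed. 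Now reconstruct $s$ inside $M_\omega[P]$: both $j_\omega(F) \in M_\omega$ and $P$ lie in $M_\omega[P]$, and for every $n$
\[ \big(j_\omega(F)(n)\big)(P \restriction m_n) = j_\omega(f_n)(\kappa_0,\dots,\kappa_{m_n-1}) = \alpha_n, \]
so $s = \langle (j_\omega(F)(n))(P\restriction m_n) \mid n<\omega\rangle$ is defined from $j_\omega(F)$ and $P$ by a single instance of replacement in $M_\omega[P]$, whence $s \in M_\omega[P]$.

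I expect the only genuine subtlety to be the third step. An arbitrary $\omega$-sequence of elements of $M_\omega$ is typically not in $M_\omega$ — indeed $P$ is exactly such a sequence and is \emph{not} in $M_\omega$, since $\kappa_\omega$ is regular there — so the whole point is that sequences of the special shape $\langle j_\omega(f_n)\rangle$, arising from one $V$-sequence of functions, are precisely the ones that $M_\omega$ recovers. The sequence $P$ then supplies exactly the evaluation points that convert these functions back into the ordinals $\alpha_n$. Everything else is routine bookkeeping with the representation furnished by the Claim.
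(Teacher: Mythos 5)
Your proof is correct and follows essentially the same route as the paper: represent each $\alpha_n$ by a function $f_n \in V$ via part (2) of the Claim, apply $j_\omega$ to the whole sequence $\vec f$ at once, and evaluate along $P$. The details you elaborate --- that $j_\omega(\vec f)(n) = j_\omega(f_n)$ because $\omega < \crit j_\omega$, and that this is what puts the sequence of functions inside $M_\omega$ --- are exactly the (implicit) content of the paper's argument.
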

\begin{proof}
Let $\langle \alpha_n \mid n < \omega\rangle$ be a sequence of ordinals. Fix, in $V$, a countable sequence of functions $f_n \colon \kappa^{m_n} \to \Ord$, such that $j_\omega(f_n)(\kappa_0,\dots, \kappa_{m_n-1}) = \alpha_n$. Let $\vec f = \langle f_n \mid n < \omega\rangle$.

Then $j_\omega(\vec f) \in M_\omega$. Therefore, 
\[\langle \alpha_n \mid n < \omega\rangle = \langle j_\omega(f_n)(P \restriction m_n) \mid n < \omega\rangle \in M_\omega.\] 
\end{proof}

\begin{lemma}\label{lemma:intersection-in-vanilla-Prikry}
Let $X \in \bigcap M_n$ be a set of ordinals. Then $X \in M_\omega[P]$.
\end{lemma}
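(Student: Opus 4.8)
The plan is to establish the reverse inclusion directly for a set of ordinals $X \subseteq \theta$ with $X \in \bigcap_n M_n$ by \emph{pushing $X$ forward} along each embedding $j_{n,\omega}$, assembling the resulting sequence of $M_\omega$-sets inside $M_\omega[P]$ using the countable closure from Lemma~\ref{lemma:countable-closure-vanilla-Prikry}, and then reconstructing $X$ from this sequence as a pointwise limit. Since $M_\omega \subseteq M_\omega[P]$ and each $j_{n,\omega}(X)$ lies in $M_\omega$, the only genuinely new ingredient $M_\omega[P]$ must supply is the \emph{indexing} of these approximations by $P$, and that is exactly what the countable closure buys.

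First I would produce the approximating sequence. For each $n$, since $X \in M_n$, every element of $M_n$ is represented by a function on a finite power of $\kappa$, so there is $h_n \in V$, $h_n \colon \kappa^n \to V$, with $X = j_{0,n}(h_n)(\kappa_0,\dots,\kappa_{n-1})$. Applying $j_{n,\omega}$, and using both $j_{n,\omega}\circ j_{0,n} = j_\omega$ and that each $\kappa_i$ with $i<n$ lies below $\crit j_{n,\omega} = \kappa_n$ and is therefore fixed, I obtain
\[ j_{n,\omega}(X) = j_\omega(h_n)(\kappa_0,\dots,\kappa_{n-1}) = j_\omega(h_n)(P\restriction n). \]
Now $\vec h = \langle h_n \mid n<\omega\rangle \in V$, so $j_\omega(\vec h) \in M_\omega$, and since $\crit j_\omega > \omega$ its restriction to $\omega$ is precisely $\langle j_\omega(h_n) \mid n<\omega\rangle \in M_\omega$. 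Feeding in $P$, the sequence $\langle j_{n,\omega}(X) \mid n<\omega\rangle = \langle j_\omega(h_n)(P\restriction n) \mid n<\omega\rangle$ is definable in $M_\omega[P]$, hence belongs to $M_\omega[P]$.

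It remains to recover $X$ from $\langle j_{n,\omega}(X)\rangle_n$ inside $M_\omega[P]$; I claim $X = \{\gamma : \gamma \in j_{n,\omega}(X) \text{ for all sufficiently large } n\}$, a $\liminf$ that $M_\omega[P]$ can read off from the sequence. Below $\kappa_\omega$ this is immediate: if $\gamma < \kappa_\omega$ then $\gamma < \kappa_n$ for large $n$, and since $j_{n,\omega}$ fixes ordinals below its critical point one checks $j_{n,\omega}(X) \cap \kappa_n = X \cap \kappa_n$, so $\gamma \in j_{n,\omega}(X) \iff \gamma \in X$ for all such $n$; this already yields $X \cap \kappa_\omega \in M_\omega[P]$. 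For the tail I would use that, for $X,\gamma \in M_n$, elementarity of $j_{n,\omega}$ gives $\gamma \in X \iff j_{n,\omega}(\gamma) \in j_{n,\omega}(X)$, and combine it with the fact that every ordinal is \emph{eventually fixed}: the sequence $\langle j_{n,\omega}(\gamma)\rangle_n$ is non-increasing, because $j_{n,\omega}(\gamma) = j_{n+1,\omega}(j_{n,n+1}(\gamma)) \geq j_{n+1,\omega}(\gamma)$, hence eventually constant, and one argues its eventual value is $\gamma$ itself.

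The main obstacle is precisely this last point for ordinals $\gamma \geq \kappa_\omega$: identifying the eventual value of $\langle j_{n,\omega}(\gamma)\rangle_n$ with $\gamma$, equivalently showing the $\liminf$ recovery is correct above $\kappa_\omega$. Eventual constancy does give $j_{n,n+1}(\gamma) = \gamma$ for large $n$, but deducing $j_{n,\omega}(\gamma)=\gamma$ from fixing by the single steps is delicate when $\gamma$ is a limit lying above every critical point, since the ``drop below the critical point'' trick available for $\gamma<\kappa_\omega$ no longer applies. Here I would lean on the continuity-point analysis of Claim~\ref{claim:continuity-points} together with the bounded width of the iteration to control $j_{n,\omega}$ on $[\kappa_\omega,\theta)$, reducing the limit case to regular cardinals, which are continuity points and are fixed once they exceed the width of $j_{n,\omega}$. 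Once the recovery formula is verified, $X \in M_\omega[P]$ follows, and together with Lemma~\ref{lemma:BD-easy-direction} this closes the Bukovsk\'y--Dehornoy theorem.
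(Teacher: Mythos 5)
There is a genuine gap, and it sits exactly at the point you flag as the ``main obstacle'': the recovery formula $X = \{\gamma : \gamma \in j_{n,\omega}(X) \text{ for all large } n\}$ is false for sets that are not bounded below $\kappa_\omega$, and the eventual value of the non-increasing sequence $\langle j_{n,\omega}(\gamma)\rangle_n$ is \emph{not} $\gamma$ in general. Eventual constancy does give $j_{n,m}(\gamma)=\gamma$ for all large $n$ and all finite $m \geq n$, but being fixed by every finite stage does not imply being fixed by the direct limit map. The simplest counterexample is $\gamma=\kappa_\omega$: one has $j_{n,m}(\kappa_\omega)=\kappa_\omega$ for all $n\leq m<\omega$ (it is the supremum of the critical sequence of the tail iteration, definably, in every $M_n$), yet $j_{n,\omega}(\kappa_\omega) \geq j_{n,\omega}(\kappa_n+1) > j_{n,\omega}(\kappa_n) = \kappa_\omega$. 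So for $X=\{\kappa_\omega\}$ your liminf set is $\{j_{0,\omega}(\kappa_\omega)\} \neq X$, and no appeal to continuity points or width can repair this: above $\kappa_\omega$ an ordinal $\gamma$ is eventually \emph{in the range} of $j_{n,\omega}$, but for $\gamma = j_{n,\omega}(\bar\gamma)$ with $\bar\gamma\neq\gamma$ the equivalence $\gamma\in j_{n,\omega}(X) \iff \bar\gamma \in X$ says nothing about whether $\gamma\in X$. (Your argument below $\kappa_\omega$ is fine, but that case is already covered by the earlier lemma that $M_\omega[P]\cap V_{\kappa_\omega}\subseteq M_\omega$.)

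The fix is to pull back before pushing forward, which is what the paper does. Set $Y_n = \{\alpha : j_{n,\omega}(\alpha)\in X\}$; this lies in $M_n$ because $X\in M_n$ and $j_{n,\omega}$ is a definable class of $M_n$. Then put $Z_n = j_{n,\omega}(Y_n)$ and use $\beta\in X \iff \forall^* n\ \beta\in Z_n$. Now the verification is immediate: for large $n$ we have $\beta = j_{n,\omega}(\bar\beta)$ for some $\bar\beta$, and $\beta\in Z_n \iff \bar\beta\in Y_n \iff j_{n,\omega}(\bar\beta)=\beta\in X$ --- the pullback builds the condition ``$j_{n,\omega}(\alpha)\in X$'' into $Y_n$, so you never need $\bar\beta\in X \iff \beta\in X$. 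Your first step (assembling the countable sequence of $M_\omega$-sets inside $M_\omega[P]$ via representing functions and the $\sigma$-closure lemma) is correct and is exactly how the paper gets $\langle Z_n\mid n<\omega\rangle$ into $M_\omega[P]$; only the choice of approximating sets needs to change.
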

\begin{proof}
Let 
\[Y_n = \{\alpha \mid j_{n,\omega}(\alpha) \in X\}, \quad Z_n = j_{n,\omega}(Y_n).\]
By the $\sigma$-closure, $\langle Z_n \mid n < \omega\rangle \in M_\omega$. Now,
\[\beta \in X \iff \forall^* n \beta \in Z_n,\]
where $\forall^*$ means for all large $n$. Indeed, if $\beta \in \range j_{n,\omega}$ then $\beta \in X \iff \beta \in Z_n$.
\end{proof}
\end{proof}

From the proof one can extract a more concrete definition for the model $M_\omega[P]$: 
\begin{proposition}
$M_{\omega}[P]$ is the least transitive class $C$ that contains $M_\omega\cup\{ P\}$ and closed under the operations:
\begin{enumerate}
\item If $\vec g = \langle g_n \mid n < \omega\rangle \in C$ such that $\dom g_n = \kappa_\omega^n$ then $\langle g_n(P \restriction n) \mid n < \omega\rangle$ is in $C$. 
\item If $\langle X_n \mid n < \omega\rangle \in C$ then $X = \{z \mid \forall^* n, z\in X_n\}$. 
\item If $X = \langle A, E\rangle \in C$ and $E$ is well founded and extensional, then the Mostowski collapse of $X$ is in $C$.
\end{enumerate}
\end{proposition}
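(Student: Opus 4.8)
The plan is to prove the two inclusions $C \subseteq M_\omega[P]$ and $M_\omega[P] \subseteq C$, where $C$ denotes the least transitive class containing $M_\omega \cup \{P\}$ and closed under the three operations. Such a least class exists, since transitivity, containing $M_\omega \cup \{P\}$, and closure under the three (deterministic) operations are all preserved under arbitrary intersections.

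For the inclusion $C \subseteq M_\omega[P]$, I would note that $M_\omega[P]$ is a transitive model of $\ZFC$ containing $M_\omega \cup \{P\}$, so by minimality of $C$ it suffices to check that $M_\omega[P]$ is closed under each operation. Each is a $\ZFC$-provably total definable operation in the parameter $P$: operation (1) sends $\vec g$ to $\langle g_n(P\restriction n)\rangle$, defined by recursion from $\vec g$ and $P$; operation (2) sends $\langle X_n\rangle$ to $\bigcup_m \bigcap_{n\ge m} X_n$; and operation (3) is the Mostowski collapse, which exists inside any model of $\ZF$ for a set-sized well founded extensional relation (well-foundedness of a set relation being absolute). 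Hence $M_\omega[P]$, closed under all definable operations with parameters from it, is closed under (1)--(3).

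For $M_\omega[P] \subseteq C$, I would re-run the proof of Theorem \ref{thm:BD-for-vanilla}, checking that each step is an instance of one of the three operations, first for sets of ordinals. Fix a set of ordinals $X \in \bigcap M_n = M_\omega[P]$ and, following Lemma \ref{lemma:intersection-in-vanilla-Prikry}, put $Y_n = \{\alpha \mid j_{n,\omega}(\alpha)\in X\}$ and $Z_n = j_{n,\omega}(Y_n)$. Since $X \in M_n$ we have $Y_n \in M_n$, so the first Claim lets us write $Y_n = j_n(p_n)(\kappa_0,\dots,\kappa_{n-1})$ with $p_n\colon \kappa^n \to V$ in $V$; as $j_{n,\omega}$ fixes $\kappa_0,\dots,\kappa_{n-1}$ and $j_{n,\omega}\circ j_n = j_\omega$, this yields the key identity
\[ Z_n = j_\omega(p_n)(P\restriction n). \]
Taking $\vec g = j_\omega(\langle p_n \mid n<\omega\rangle) = \langle j_\omega(p_n)\mid n<\omega\rangle \in M_\omega \subseteq C$, with $\dom g_n = \kappa_\omega^n$, operation (1) places $\langle Z_n\rangle = \langle g_n(P\restriction n)\rangle$ in $C$, and operation (2) then places $X = \{\beta \mid \forall^* n\ \beta\in Z_n\}$ in $C$, the equivalence $\beta\in X \Leftrightarrow \forall^* n\ \beta\in Z_n$ being exactly the computation in Lemma \ref{lemma:intersection-in-vanilla-Prikry}. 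Note that, unlike the sequences appearing in Lemma \ref{lemma:countable-closure-vanilla-Prikry}, this $\langle Z_n\rangle$ uses precisely $n$ coordinates, so operation (1) applies verbatim with $P\restriction n$ and no reindexing is needed. To pass to an arbitrary $x \in M_\omega[P]$, I would work inside $M_\omega[P]\models\ZFC$ to fix an ordinal $\theta$, a bijection $e\colon \theta \to \mathrm{trcl}(\{x\})$, and the relation $E = \{(\alpha,\beta)\mid e(\alpha)\in e(\beta)\}$ on $\theta$; then $\langle\theta,E\rangle$ is well founded and extensional and $x$ is the value at $e^{-1}(x)$ of its collapse, so operation (3) puts $x$ into $C$ once $\langle\theta,E\rangle$ is known to lie in $C$.

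The step I expect to require the most care, and the main obstacle, is exactly this last point. The previous paragraph literally produces the \emph{set of ordinals} coding $E$ (e.g.\ $\hat E = \{\Gamma(\alpha,\beta)\mid (\alpha,\beta)\in E\}$ for a fixed Gödel pairing $\Gamma\in M_\omega$), not the relation $E$ nor the ordered pair $\langle\theta,E\rangle$ that operation (3) demands. I would therefore interpose a short closure lemma showing that $C$ is closed under the Gödel-style decoding and pairing needed to reconstruct $\langle\theta,E\rangle$ from $\theta$ and $\hat E$; morally this holds because $C\supseteq M_\omega$ already contains the relevant definable coding functions and the reconstructions are, like (1)--(3), absolute definable operations, but confirming that they are genuinely generated by iterating the three listed operations (rather than tacitly added) is the delicate bookkeeping. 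Everything else is a direct transcription of the proof of Theorem \ref{thm:BD-for-vanilla}.
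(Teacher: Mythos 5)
Your proposal is correct and follows exactly the route the paper intends: the paper gives no separate proof of this proposition, stating only that it is extracted from the proof of Theorem \ref{thm:BD-for-vanilla}, and your two inclusions are precisely that extraction. The one point you flag as the main obstacle --- recovering the structure $\langle\theta,E\rangle$ from a G\"odel-coded set of ordinals --- can be sidestepped: run the $Y_n, Z_n$ argument of Lemma \ref{lemma:intersection-in-vanilla-Prikry} directly on $E$ as a set of \emph{pairs} of ordinals, since pairs of ordinals lie in $M_\omega$ and are in the range of $j_{n,\omega}$ for all large $n$, so operations (1) and (2) yield $E$ itself with no decoding. What remains is only the formation of the ordered pair $\langle\theta,E\rangle$ from $\theta$ and $E$, which is not literally one of the three listed operations; the proposition is evidently meant to be read with $C$ also closed under such rudimentary (G\"odel) operations, and under that reading your argument is complete.
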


\section{Gitik-Sharon forcing}\label{section:GS-forcing}
Let us consider now the diagonal supercompact forcing, introduced by Gitik and Sharon in \cite{GitikSharon2008}. This forcing is using a stronger type of large cardinal axioms --- supercompactness. 
\begin{definition}
Let $\kappa \leq \lambda$ be cardinals. Let $P_\kappa \lambda$ be the set of all subsets of $\lambda$ of cardinality $<\kappa$. 

A $\kappa$-complete ultrafilter $\mathcal{U}$ on $P_\kappa\lambda$ is \emph{fine} if for every $\alpha \in \lambda$, the set $\{x \in P_\kappa\lambda\mid \alpha \in x\} \in \mathcal{U}$. $\mathcal{U}$ is \emph{normal} if it is fine and for every choice function $f \colon A \to \lambda$, where $A \in \mathcal{U}$, there is an $\mathcal{U}$-large set $B \subseteq A$, such that $f\restriction B$ is constant.
\end{definition}
\begin{lemma}[Reinhardt, Magidor]
There is a normal measure on $P_\kappa\lambda$ if and only if there is an elementary embedding $j \colon V \to M$, with critical point $\kappa$ such that $M$ is closed under $\lambda$-sequences and $j(\kappa) > \lambda$.
\end{lemma}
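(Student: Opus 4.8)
The plan is to prove both directions through the ultrapower construction, with normality of the measure corresponding exactly to the identification of the \emph{seed} $[\mathrm{id}]_{\mathcal{U}}$ with the pointwise image $j\image\lambda$.

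For the forward direction, suppose $\mathcal{U}$ is a normal measure on $P_\kappa\lambda$ and form $j \colon V \to M = \Ult(V,\mathcal{U})$. That $\crit j = \kappa$ follows from $\kappa$-completeness and fineness in the usual way. For $j(\kappa) > \lambda$ I would use the seed $d = [\mathrm{id}]_{\mathcal{U}}$: fineness gives $j(\alpha) \in d$ for every $\alpha < \lambda$, so $j\image\lambda \subseteq d$, while {\L}o\v{s}'s theorem together with $|x| < \kappa$ for $x \in P_\kappa\lambda$ gives $M \models |d| < j(\kappa)$; since the witnessing bijection lies in $M \subseteq V$ we conclude $\lambda = |j\image\lambda| \leq |d| < j(\kappa)$. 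The role of normality is to upgrade $j\image\lambda \subseteq d$ to an equality: any $a \in d$ is represented by a function $f$ with $f(x) \in x$ for $\mathcal{U}$-almost all $x$, that is, a choice function, and normality makes $f$ constant on a large set, so $a = j(\gamma)$ for some $\gamma < \lambda$. Hence $d = j\image\lambda \in M$.

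The closure step is where the argument requires the most care. Given a sequence $s = \langle a_\xi \mid \xi < \lambda\rangle$ of elements of $M$, choose representatives $a_\xi = [f_\xi]$ and set $h(x) = \langle f_\xi(x) \mid \xi \in x\rangle$; then $[h] \in M$ is the function $j(\xi) \mapsto a_\xi$ with domain $d = j\image\lambda$. The key observation I would isolate is that $j\restriction\lambda \in M$: since $d \in M$ is a set of ordinals with $\otp(d) = \lambda$, its increasing enumeration is computed inside $M$, and because $j\restriction\lambda$ is itself an increasing bijection from $\lambda$ onto $d$ it must coincide with that enumeration. Composing, $s = [h] \circ (j\restriction\lambda) \in M$, so $M$ is closed under $\lambda$-sequences.

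For the reverse direction, given $j$ with the stated properties I would define $\mathcal{U} = \{X \subseteq P_\kappa\lambda \mid j\image\lambda \in j(X)\}$. One first checks the definition is legitimate: $\lambda$-closure of $M$ puts $j\restriction\lambda$, hence $j\image\lambda$, into $M$, and $M \models |j\image\lambda| \leq \lambda < j(\kappa)$ places $j\image\lambda$ into $j(P_\kappa\lambda) = (P_{j(\kappa)}j(\lambda))^M$. Ultrafilter-ness is immediate, $\kappa$-completeness follows because $\crit j = \kappa$ lets $j$ commute with intersections of fewer than $\kappa$ sets, and fineness holds since $j(\alpha) \in j\image\lambda$ for each $\alpha < \lambda$. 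Normality is the payoff: if $f$ is a choice function then $j(f)(j\image\lambda) \in j\image\lambda$ equals some $j(\gamma)$, and pulling back $\{x \mid f(x) = \gamma\}$ lands in $\mathcal{U}$. I expect the main obstacle throughout to be bookkeeping the two meanings of cardinality and order type, computed in $M$ versus in $V$, and in particular justifying that $j\restriction\lambda \in M$, which is the single fact that both the closure direction and the well-definedness of $\mathcal{U}$ turn on.
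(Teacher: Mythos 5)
Your proof is correct and is the standard argument: the paper itself states this lemma without proof (it is cited as classical), but the remark immediately following it in the paper --- that $\mathcal{U} = \{X \subseteq P_\kappa\lambda \mid j\image\lambda \in j(X)\}$ is a normal measure with seed $j\image\lambda$ --- is exactly your reverse direction, and your forward direction (seed analysis $[\mathrm{id}]_{\mathcal{U}} = j\image\lambda$ via normality, $j\restriction\lambda \in M$ via the increasing enumeration of the seed, and closure via $[h]\circ(j\restriction\lambda)$) is the standard ultrapower argument. All the key steps, including the two that genuinely require care ($j\restriction\lambda \in M$ and the $V$-versus-$M$ cardinality bookkeeping for $j(\kappa) > \lambda$), are handled correctly.
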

Let us recall that given an elementary embedding $j \colon V \to M$, such that $\crit j = \kappa$, $j(\kappa) > \lambda$ and ${}^\lambda M \subseteq M$, the ultrafilter $\mathcal{U} = \{X \subseteq P_\kappa \lambda \mid j \image \lambda \in j(X)\}$ is a normal measure on $P_\kappa \lambda$. In other words, the \emph{seed} of $\mathcal{U}$ is $j\image \lambda$. 

\begin{remark}
Throughout this section we will always assume that $\kappa$ carries a sequence of normal measures $\mathcal{V}_n$ on $P_\kappa \kappa^{+n}$. Except for a couple of changes in the notations, there is no harm in reducing the hypothesis to $\mathcal{V}_n$ being $\kappa$-complete fine measure on $P_\kappa \kappa^{+n}$.  
\end{remark}
We would like to define an iteration using this sequence of supercompact measures. 

\[\begin{matrix}
V &=& M_0 & j_{0,0} = id\\  
M_{n + 1} &=& \Ult(M_n, j_n(\mathcal{V}_n)), & j_{n,n+1} \colon M_n \to M_{n+1}\\ 
&\vdots&\\ 
M_\omega & =& \lim \langle M_n, j_{n,m} \mid n \leq m < \omega\rangle  & j_{n,\omega} \colon M_n \to M_\omega 
\end{matrix}\]
Either Gaifman's arguments, or the modern version using the completeness of the measures show that:
\begin{lemma}
$M_\omega$ is well founded.
\end{lemma}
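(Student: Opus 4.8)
The plan is to reduce well-foundedness of the direct limit to the countable completeness of the measures $\mathcal{V}_n$, exploiting that a $\kappa$-complete ultrafilter is in particular \emph{externally} countably complete: any countable family of its members has nonempty intersection, since $\omega < \kappa$ and the defining clause of $\kappa$-completeness quantifies over arbitrary families of size $<\kappa$, internal or not. I would first dispatch the finite stages by induction on $n$, the crucial point being the single elementary step: if $M_n$ is well founded and $W$ is an ultrafilter of $M_n$ that is externally countably complete, then $\Ult(M_n,W)$ is well founded. Indeed, an infinite $E$-descending chain $[f_{k+1}]\mathrel{E}[f_k]$ produces measure-one sets $B_k = \{x \mid f_{k+1}(x) \in f_k(x)\} \in W$; by external countable completeness $\bigcap_k B_k \neq \emptyset$, and any $x$ in the intersection yields an infinite $\in$-descending chain $\langle f_k(x) \mid k<\omega\rangle$ inside $M_n$, contradicting well-foundedness. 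Thus it suffices to know that each step measure $j_n(\mathcal{V}_n)$ is externally countably complete, which follows from the countable completeness of $\mathcal{V}_n$ in $V$ via the standard preservation of countable completeness under the elementary embedding $j_n = j_{0,n}$.

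The substantial part is the limit stage. Here I would use the {\L}o\v{s}-style representation of the direct limit: every element of $M_\omega$ has the form $j_{0,\omega}(f)(\vec a)$, with $f \in V$ and $\vec a$ a finite tuple of generators $a_i = j_{i+1,\omega}(s_i)$ coming from the seeds $s_i$ of the successive ultrapowers, and membership $j_{0,\omega}(f)(\vec a)\in j_{0,\omega}(g)(\vec b)$ is governed by a measure-one condition with respect to a finite-support product of the underlying measures $\mathcal{V}_i$. Given a putative infinite descending sequence $\langle x_k \mid k<\omega\rangle$ in $M_\omega$, I would write $x_k = j_{0,\omega}(f_k)(\vec a_k)$; only countably many generators occur, indexed by some $I \subseteq \omega$, and each relation $x_{k+1}\in x_k$ becomes a measure-one set $B_k$ in the corresponding product of the $\mathcal{V}_i$, $i \in I$.

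The hard part, and the reason the statement is genuinely about limits rather than a triviality, is that the descending sequence may draw on cofinally many coordinates $i \in I$, so one cannot simply pull it back to a single finite stage $M_n$ and invoke the induction. I would overcome this by a Fubini-style diagonalization: using the countable completeness of each individual $\mathcal{V}_i$ in $V$, one builds a single assignment of the generators by actual objects of $V$ lying in the countable intersection $\bigcap_k B_k$, simultaneously realizing every relation $x_{k+1}\in x_k$. Such a realization yields an infinite $\in$-descending chain inside $V$, the desired contradiction. The entire difficulty is localized to this simultaneous-realization step; alternatively one can run Gaifman's original argument, reflecting the putative ill-foundedness into a countable elementary substructure and deriving the contradiction there, but the completeness argument isolates the obstacle more cleanly to the single Fubini over the $\omega$-indexed product.
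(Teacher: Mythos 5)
Your argument is correct and is exactly the ``modern version using the completeness of the measures'' that the paper invokes for this lemma without supplying any details (the lemma is stated with only a reference to Gaifman's argument or the completeness argument). The finite-stage induction via external countable completeness, together with the representation of $M_\omega$-membership by measure-one sets in finite-support products of the $\mathcal{V}_i$ and the Fubini-style diagonalization producing a single point realizing all countably many conditions, is the standard way to fill in that citation, so there is nothing to compare against in the text itself.
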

As usual, we let $j_n = j_{0,n}$ for $n \leq \omega$.
 
Unlike the case of the Prikry forcing, this time the seeds of the embeddings are moved by the later steps of the iterations. Thus, we need to define the sequence $P$ in a more precise way.

Let \[p_n = j_{n+1, \omega} \left(j_{n,n+1}\image j_n(\kappa^{+n})\right) = j_{n,\omega}\image j_n(\kappa^{+n})\]
and let $P = \langle p_n \mid n < \omega\rangle$\footnote{Here, if we assume that $\mathcal{V}_n$ are merely $\kappa$-complete fine ultrafilters, instead of normal ones, we will need to change the definition of $p_n$ to be $j_{n+1, \omega} \left([id]_{j_n(\mathcal{V}_n)}\right)$.}.

Let us collect a couple of useful properties of the iteration.
\begin{lemma}\label{lemma:critical-points-GS}
For every $n < m \leq \omega$, $\crit j_{n,m} = j_n(\kappa)$. 
\end{lemma}
\begin{lemma}\label{lemma:representing-elements-GS}
For every $x \in M_n$ there is a function $f \colon \prod_{m < n} P_\kappa \kappa^{+m} \to V$ such that 
\[x = j_n(f)(\bar{p}_0,\dots, \bar{p}_{n-1}),\]
where $\bar p_m = j_{n,m}\image j_m(\kappa^{+m})$.
In particular, for every $x \in M_\omega$ there is a natural number $n$ and $f \colon \prod_{m < n} P_\kappa \kappa^{+m} \to V$ such that $x = j_\omega(f)(p_0,\dots, p_{n-1})$.
\end{lemma}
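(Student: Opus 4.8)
The plan is to prove the main assertion by induction on $n$ and then to read off the ``in particular'' clause by factoring an arbitrary $x \in M_\omega$ through some $M_n$. The one tool I would isolate first is a continuity observation: if $j \colon M \to N$ is elementary and $B \in M$ is a set of ordinals with $|B|^M < \crit j$, then $j(B) = j\image B$ (enumerate $B$ in an order type below $\crit j$; since $j$ fixes that index set pointwise, it sends each listed element to its image and nothing else). I would apply this only to the seed sets $\bar p_m = j_{m,n}\image j_m(\kappa^{+m})$, so the preliminary point is that these really are small. Because $\mathcal V_m$ concentrates on $P_\kappa \kappa^{+m}$, the ultrapower by $j_m(\mathcal V_m)$ satisfies $j_{m,m+1}(j_m(\kappa)) > j_m(\kappa^{+m})$, i.e.\ $j_m(\kappa^{+m}) < j_{m+1}(\kappa)$; combined with Lemma \ref{lemma:critical-points-GS} this yields $|\bar p_m|^{M_n} < j_{m+1}(\kappa) \le j_n(\kappa) = \crit j_{n,m'}$ whenever $m < n < m' \le \omega$. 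Hence every later embedding acts pointwise on the earlier seeds, and in particular the seeds propagate: $j_{n,m'}\bigl(j_{m,n}\image j_m(\kappa^{+m})\bigr) = j_{m,m'}\image j_m(\kappa^{+m})$.

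For the induction itself, the base case $n=0$ is trivial (empty product). For the step I would take $x \in M_{n+1} = \Ult(M_n, j_n(\mathcal V_n))$ and use the fundamental theorem on ultrapowers to write $x = j_{n,n+1}(g)(s_n)$, where $g \in M_n$ has domain $P_{j_n(\kappa)} j_n(\kappa^{+n})$ and $s_n = j_{n,n+1}\image j_n(\kappa^{+n})$ is the seed --- which is exactly the $m=n$ seed set as seen in $M_{n+1}$. The induction hypothesis applied to $g$ gives $h$ with $g = j_n(h)(\bar p_0, \dots, \bar p_{n-1})$, and I would then combine by setting $f(a_0,\dots,a_n) = h(a_0,\dots,a_{n-1})(a_n)$, fixing a default value where this is undefined. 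By elementarity of $j_{n+1}$ one gets $j_{n+1}(f)(c_0,\dots,c_n) = j_{n+1}(h)(c_0,\dots,c_{n-1})(c_n)$; applying $j_{n,n+1}$ to the representation of $g$ and using the seed-propagation above turns $x = j_{n,n+1}(g)(s_n)$ into $x = j_{n+1}(f)(\bar p_0,\dots,\bar p_n)$ (seeds now taken in $M_{n+1}$), closing the induction.

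For the final clause I would pick $n$ and $\bar x \in M_n$ with $x = j_{n,\omega}(\bar x)$, represent $\bar x = j_n(f)(\bar p_0,\dots,\bar p_{n-1})$, and push through $j_{n,\omega}$; smallness of the seeds again makes $j_{n,\omega}$ act pointwise on each, so $j_{n,\omega}(\bar p_m) = j_{m,\omega}\image j_m(\kappa^{+m}) = p_m$ and $x = j_\omega(f)(p_0,\dots,p_{n-1})$. I expect the genuine obstacle to be precisely the smallness bookkeeping of the first paragraph: unlike the vanilla Prikry iteration, where each seed is a single ordinal $\kappa_n$ and automatically below every later critical point, here the seeds are sets of size $\kappa^{+m}$, and one must verify they lie below the critical points of all subsequent embeddings. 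This is the only place the degree of supercompactness enters --- it is exactly the inequality $j_m(\kappa^{+m}) < j_{m+1}(\kappa)$ --- and it is what lets each seed set behave, for the purposes of the representation, just as the single ordinals did in the Prikry case.
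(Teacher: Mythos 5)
Your proof is correct. The paper actually states this lemma without proof, and your argument --- induction on $n$ using {\L}o\v{s}'s theorem at each ultrapower step, plus the observation that each seed set $\bar p_m$ has size below all later critical points (via $j_m(\kappa^{+m}) < j_{m+1}(\kappa)$), so that later embeddings act on it pointwise --- is precisely the standard argument the paper implicitly relies on: the paper's own displayed identity $p_n = j_{n+1, \omega}\left(j_{n,n+1}\image j_n(\kappa^{+n})\right) = j_{n,\omega}\image j_n(\kappa^{+n})$ is exactly your seed-propagation step, and you also correctly read the statement's typo $j_{n,m}$ as $j_{m,n}$.
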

\begin{lemma}
Let $\lambda$ be a regular cardinal in $M_n$,  $\lambda \notin [j_n(\kappa), j_n(\kappa^{+\omega}))$. Then, $\lambda$ is a continuity point of $j_{n,\omega}$.
\end{lemma}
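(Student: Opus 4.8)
The plan is to mimic the proof of Claim~\ref{claim:continuity-points} from the Prikry case, replacing ``functions on finite powers of $\kappa_n$'' by ``functions on finite products of the $P_\kappa\kappa^{+m}$''. I would first split into the two cases packaged by the hypothesis $\lambda \notin [j_n(\kappa), j_n(\kappa^{+\omega}))$. If $\lambda < j_n(\kappa) = \crit j_{n,\omega}$ (Lemma~\ref{lemma:critical-points-GS}), then $j_{n,\omega}\restriction\lambda$ is the identity, so $\lambda$ is a fixed point and hence a continuity point, and there is nothing more to do. The substance is the case $\lambda \geq j_n(\kappa^{+\omega})$, where I must show that every $\alpha < j_{n,\omega}(\lambda)$ is bounded below $\sup j_{n,\omega}\image\lambda$.

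For that, I would first record the \emph{tail} version of the representation Lemma~\ref{lemma:representing-elements-GS}. Viewed inside $M_n$, the iteration $M_n \to M_\omega$ is itself a Gitik--Sharon iteration, namely the iteration of the supercompact measures $\langle j_n(\mathcal{V}_m) \mid n \leq m < \omega\rangle$ living on the $P_{j_n(\kappa)}(j_n(\kappa^{+m}))$, whose seeds are exactly the $p_m$ for $m \geq n$. Applying Lemma~\ref{lemma:representing-elements-GS} inside $M_n$ therefore gives, for each $\alpha \in M_\omega$, a natural number $k$ and a function $g \in M_n$ with
\[
\dom g = \prod_{n \leq m < n+k} P_{j_n(\kappa)}\big(j_n(\kappa^{+m})\big), \qquad \alpha = j_{n,\omega}(g)(p_n, \dots, p_{n+k-1}).
\]
For $\alpha < j_{n,\omega}(\lambda)$, {\L}o\v{s}' theorem lets me assume (after discarding a measure-zero set) that $\range g \subseteq \lambda$.

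The crux is then a cardinality count. In $M_n$ the factor $P_{j_n(\kappa)}(j_n(\kappa^{+m}))$ has cardinality $j_n\big((\kappa^{+m})^{<\kappa}\big)$, which under GCH (or any arithmetic keeping these values below $\kappa^{+\omega}$) equals $j_n(\kappa^{+m})$; so $\dom g$ has cardinality $j_n(\kappa^{+(n+k-1)}) < j_n(\kappa^{+\omega}) \leq \lambda$. Since $\lambda$ is regular in $M_n$ and $|\range g| \leq |\dom g| < \lambda$, the ordinal $\gamma = \sup \range g$ is $< \lambda$. By {\L}o\v{s}, $\alpha = j_{n,\omega}(g)(p_n,\dots,p_{n+k-1}) \leq j_{n,\omega}(\gamma) < j_{n,\omega}(\gamma+1)$ with $\gamma+1 < \lambda$, so $\alpha < \sup j_{n,\omega}\image\lambda$. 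As $\alpha$ was arbitrary, $j_{n,\omega}(\lambda) = \sup j_{n,\omega}\image\lambda$, i.e.\ $\lambda$ is a continuity point.

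The main obstacle I anticipate is precisely this cardinality step, and it is also where the excluded interval enters: it is essential that $\dom g$, a \emph{finite} product of sets of size $< j_n(\kappa^{+\omega})$, stay bounded below $\lambda$, which forces $\lambda \geq j_n(\kappa^{+\omega})$. For $\lambda$ inside $[j_n(\kappa), j_n(\kappa^{+\omega}))$ the domain can be cofinal in $\lambda$ and the argument genuinely breaks, matching the expectation that exactly these cardinals are affected by the forcing. A secondary point to get right is the bookkeeping for the tail representation (that the seeds of the iteration $M_n \to M_\omega$ are the $p_m$ for $m\geq n$, and that the count $|P_\kappa\kappa^{+m}| = (\kappa^{+m})^{<\kappa}$ transfers through $j_n$ by elementarity), but this is routine once the Gitik--Sharon iteration is recognized internally to $M_n$.
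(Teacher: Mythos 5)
Your proof is correct and follows essentially the same route as the paper's: the fixed-point case below the critical point, and for $\lambda \geq j_n(\kappa^{+\omega})$ the representation of any $\alpha < j_{n,\omega}(\lambda)$ by a function whose domain has cardinality $< \lambda$, so that regularity of $\lambda$ in $M_n$ bounds $\sup\range g$ below $\lambda$. The only differences are cosmetic --- the paper treats $n=0$ and waves at the tail case you spell out --- and your GCH hedge in the count $|P_{j_n(\kappa)}(j_n(\kappa^{+m}))| = j_n(\kappa^{+m})$ is unnecessary, since inaccessibility of $\kappa$ already gives $(\kappa^{+m})^{<\kappa} = \kappa^{+m}$ by Hausdorff's formula.
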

\begin{proof}
This is a corollary of Lemma \ref{lemma:critical-points-GS} and Lemma \ref{lemma:representing-elements-GS}. Let us prove the lemma for $n = 0$. The general case is similar, with slightly more complicated notations. If $\lambda < \kappa$ then $\lambda$ is a fixed point of $j_{\omega}$. If $\lambda > \kappa^{\omega}$, then let $\alpha < j_{\omega}(\lambda)$. So, $\alpha$ is represented by some function $f$ as above. But, $|\dom f| \leq \kappa^{+n} < \lambda$, so $\sup \range f  = \gamma < \lambda$, and therefore $\alpha < j_{\omega}(\gamma) < j_\omega(\lambda)$. 
\end{proof}
So, the proof of Lemma \ref{lemma:preserving-cardinals-Prikry} goes throughout unchanged and we conclude that every $M_{\omega}$-regular cardinal $\lambda$ which is not in the interval $[j_\omega(\kappa), j_\omega(\kappa^{+\omega}))$ remains regular in $M_\omega[P]$. 
\begin{lemma}
In $M_\omega[P]$, $\forall n < \omega,\,\cf j_\omega(\kappa^{+n}) = \omega$.
\end{lemma}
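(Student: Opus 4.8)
The plan is to exhibit, for each fixed $n<\omega$, an explicit cofinal map of ordertype $\omega$ into $j_\omega(\kappa^{+n})$ that lives in $M_\omega[P]$, built directly from the sequence $P = \langle p_m \mid m<\omega\rangle$. The natural candidate comes from the seeds themselves: recall $p_m = j_{m,\omega}\image j_m(\kappa^{+m})$, which is a set of ordinals of $M_\omega$-cardinality $<j_\omega(\kappa)$, sitting below $j_\omega(\kappa^{+m})$. First I would argue that the ordinals $\sup p_m$ (equivalently, suitable ordinals extracted from $p_m$) form an increasing sequence cofinal in $j_\omega(\kappa^{+n})$ for each $n$, and that the relevant tails are definable from $P$ inside $M_\omega[P]$.

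Concretely, for $m\geq n$ the supercompactness of $\mathcal{V}_m$ gives that $j_m(\kappa^{+m})$, hence its image $p_m = j_{m,\omega}\image j_m(\kappa^{+m})$, is cofinal in $j_m(\kappa^{+m})$ below the next critical point, and these initial segments climb as $m$ increases. The key point is that $\sup_{m<\omega}\big(\sup p_m\big) = j_\omega(\kappa^{+\omega}) = \sup_n j_\omega(\kappa^{+n})$, while for fixed $n$ the sequence $\langle \sup(p_m \cap j_\omega(\kappa^{+n})) \mid n\le m<\omega\rangle$ witnesses $\cf^{M_\omega[P]} j_\omega(\kappa^{+n}) \le \omega$. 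To see cofinality into $j_\omega(\kappa^{+n})$ itself, I would use Lemma \ref{lemma:representing-elements-GS}: any $\alpha < j_\omega(\kappa^{+n})$ is of the form $j_\omega(f)(p_0,\dots,p_{k-1})$ for some $k$ and some $f$ with domain $\prod_{m<k}P_\kappa\kappa^{+m}$; since the relevant coordinate lives in $P_\kappa\kappa^{+m}$, the value is bounded by an ordinal below $j_\omega(\kappa^{+n})$ that is itself approximated by the seeds, so $\alpha$ lies below some $\sup p_m$. Because $P\in M_\omega[P]$ and $M_\omega\subseteq M_\omega[P]$, the approximating $\omega$-sequence is in $M_\omega[P]$, giving $\cf^{M_\omega[P]} j_\omega(\kappa^{+n}) = \omega$ (it cannot be finite since $j_\omega(\kappa^{+n})$ is a limit ordinal).

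The step I expect to be the main obstacle is verifying the cofinality \emph{upward} — that the seeds actually reach arbitrarily high in $j_\omega(\kappa^{+n})$ rather than merely climbing through the cardinals $j_\omega(\kappa^{+m})$ for $m<n$. This requires using the fineness and normality of the measures carefully: one must see that $p_m = j_{m,\omega}\image j_m(\kappa^{+m})$ is unbounded in $j_{m,\omega}(j_m(\kappa^{+m})) = j_\omega(\kappa^{+m})$ in the sense that sups of its images dominate every ordinal of the form $j_\omega(f)(\dots)$ with the appropriate domain, which is exactly where Lemma \ref{lemma:representing-elements-GS} and the bound $|\dom f|\le\kappa^{+(m-1)}$ do the work. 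The bookkeeping of which seed $p_m$ dominates a given representing function, and confirming the chain of inequalities $\alpha < j_\omega(\gamma) \le \sup p_m$ for an appropriate $\gamma < \kappa^{+n}$, is the delicate part; the rest follows by the same continuity-point and $\sigma$-closure machinery already established for the vanilla Prikry case.
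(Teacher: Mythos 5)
You identify the same witnessing sequence as the paper --- $\alpha_m = \sup\left(p_m \cap j_\omega(\kappa^{+n})\right)$ for $n \le m < \omega$, computable from $P$ and the parameter $j_\omega(\kappa^{+n}) \in M_\omega$ --- but your plan for the crucial cofinality step rests on a false inequality. You propose to show $\alpha < j_\omega(\gamma) \le \sup p_m$ for some $\gamma < \kappa^{+n}$, with the bound on $|\dom f|$ ``doing the work'' as in the continuity-point lemma. This cannot work: the pointwise image $j_\omega \image \kappa^{+n}$ is bounded \emph{strictly} below $j_\omega(\kappa^{+n})$, since $j_\omega \image \kappa^{+n} \subseteq j_{n,\omega}\image j_n(\kappa^{+n})$ and the supremum of the latter is $< j_\omega(\kappa^{+n})$ (as $j_{n,n+1}$ is discontinuous at $j_n(\kappa^{+n})$). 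Note that $\kappa^{+n}$ lies exactly in the interval $[\kappa, \kappa^{+\omega})$ that the paper's continuity lemma excludes, and the continuity computation genuinely fails there: a representing function may have domain $\prod_{m<k} P_\kappa \kappa^{+m}$ of size $\kappa^{+(k-1)} \ge \kappa^{+n}$ for large $k$, so $\sup \range f$ need not be below $\kappa^{+n}$. Concretely, any element of $p_{n+1} \cap j_\omega(\kappa^{+n})$ above $\sup j_{n,\omega}\image j_n(\kappa^{+n})$ (such elements exist, since $\alpha_{n+1} > \alpha_n$) admits no $\gamma < \kappa^{+n}$ with $\alpha < j_\omega(\gamma)$. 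The correct use of Lemma \ref{lemma:representing-elements-GS} goes through the \emph{range} of the embeddings, not the pointwise image of $\kappa^{+n}$: every $\alpha \in M_\omega$ equals $j_{m,\omega}(\bar\alpha)$ for some finite $m$ and $\bar\alpha \in M_m$; if $\alpha < j_\omega(\kappa^{+n})$ then $\bar\alpha < j_m(\kappa^{+n})$ by order preservation, so for any such $m \ge n$ we get $\alpha \in j_{m,\omega}\image j_m(\kappa^{+n}) = p_m \cap j_\omega(\kappa^{+n})$, hence $\alpha \le \alpha_m$. That is the paper's argument.

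A second gap: for the sequence to witness cofinality $\omega$ you also need each $\alpha_m$ to be \emph{strictly} below $j_\omega(\kappa^{+n})$, which your proposal never establishes (the sentence asserting that $p_m$ ``is cofinal in $j_m(\kappa^{+m})$ below the next critical point'' does not express this and, read literally, is not correct). This is precisely where supercompactness enters, and where the situation differs from vanilla Prikry forcing, in which $j_{m,m+1}$ \emph{is} continuous at $j_m(\kappa^{+n})$ for $n \ge 1$ and hence $\kappa^+$ stays regular: for $m \ge n$, the map $j_{m,m+1}$ is an ultrapower of $M_m$ by a measure on $P_{j_m(\kappa)} j_m(\kappa^{+m})$, so the set $j_{m,m+1}\image j_m(\kappa^{+n})$ belongs to $M_{m+1}$ (by the closure of the ultrapower) and has $M_{m+1}$-cardinality $< j_{m+1}(\kappa)$; its supremum therefore cannot equal the $M_{m+1}$-regular cardinal $j_{m+1}(\kappa^{+n})$, and pushing forward with $j_{m+1,\omega}$ gives $\alpha_m < j_\omega(\kappa^{+n})$. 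With these two repairs your outline coincides with the paper's proof.
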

\begin{proof}
Look at $\alpha_m = \sup j_{m,\omega} \image j_m(\kappa^{+n})$. For every $m \geq n$, this is an ordinal below $j_\omega(\kappa^n)$, as $j_m(\kappa^{+n})$ is a discontinuity point of $j_{m,m+1}$. But, for every ordinal $\gamma < j_\omega(\kappa^{+n})$ there is some large enough $m$ such that $\gamma \in \range j_{m,\omega}$ and thus $\gamma < \alpha_m$. Therefore, $\sup_{m \geq n} \alpha_m = j_\omega(\kappa^{+n})$. 

The sequence $\langle \alpha_m \mid m <\omega\rangle$ can be computed from $P$: $\alpha_m = \sup \left(p_m \cap j_\omega(\kappa^{+n})\right)$.
\end{proof}
We conclude that the successor of $j_\omega(\kappa)$ in $M_\omega[P]$ is $j_\omega(\kappa^{+\omega+1})$.  

By Remark \ref{remark:chain-condition-from-width}, $M_\omega[P]$ is $j_\omega(\kappa^{+\omega+1})$-c.c.\ generic extension of $M_\omega$.

We are now ready for the proof of the Bukovsk\'y-Dehornoy Theorem for the Gitik-Sharon forcing. The proof is essentially the same as the one for the Prikry forcing, so we will sketch it.
\begin{theorem}\label{thm:BD-for-Gitik-Sharon}
$M_\omega[P] = \bigcap M_n$.
\end{theorem}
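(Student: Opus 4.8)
The plan is to mirror the two-step structure of the vanilla Prikry proof (Theorem~\ref{thm:BD-for-vanilla}), adapting each step to the supercompact setting. The easy inclusion $M_\omega[P] \subseteq \bigcap M_n$ already follows by the same reasoning as Lemma~\ref{lemma:BD-easy-direction}: each tail model $M_m$ (for $m \geq n$), each embedding $j_{m,\omega}$, and hence $P$ itself are all definable inside $M_n$, so $M_\omega[P] \subseteq M_n$ for every $n$. The content is therefore the reverse inclusion $\bigcap M_n \subseteq M_\omega[P]$, which I would establish by first proving a closure lemma for $M_\omega[P]$ and then using it to approximate an arbitrary set of ordinals in $\bigcap M_n$ from within $M_\omega[P]$.

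\emph{Step 1: closure of $M_\omega[P]$.} I would first show that $M_\omega[P]$ is closed under $\kappa_\omega$-sequences of ordinals, where $\kappa_\omega = j_\omega(\kappa)$ (not merely under countable sequences, as in the Prikry case). Let $\langle \alpha_\xi \mid \xi < \kappa_\omega\rangle$ be such a sequence. By Lemma~\ref{lemma:representing-elements-GS}, each $\alpha_\xi = j_\omega(f_\xi)(p_0,\dots,p_{n_\xi-1})$ for some $n_\xi < \omega$ and some $f_\xi \colon \prod_{m<n_\xi} P_\kappa\kappa^{+m} \to \Ord$ in $V$. The key point is that there are only $\kappa_\omega$-many indices but the functions $f_\xi$ live in $V$; I would collect the sequence $\vec f = \langle f_\xi \mid \xi < \kappa_\omega \rangle$ and argue that (an appropriate reindexing of) it is already in the range of $j_\omega$ — this is where $\kappa$-completeness and the closure of the supercompact embeddings do the work, letting us pull $\vec f$ back to a single object below the relevant critical point. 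Applying $j_\omega$ and evaluating at the coordinates of $P$ recovers $\langle \alpha_\xi \mid \xi < \kappa_\omega\rangle$ inside $M_\omega[P]$.

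\emph{Step 2: approximating sets in $\bigcap M_n$.} Given a set of ordinals $X \in \bigcap M_n$, I would imitate Lemma~\ref{lemma:intersection-in-vanilla-Prikry}, setting
\[
Y_n = \{\alpha \mid j_{n,\omega}(\alpha) \in X\}, \qquad Z_n = j_{n,\omega}(Y_n).
\]
Each $Z_n$ lies in $M_\omega$, and by the Step~1 closure the sequence $\langle Z_n \mid n < \omega\rangle$ belongs to $M_\omega[P]$. Then for $\beta \in \range j_{n,\omega}$ one has $\beta \in X \iff \beta \in Z_n$, so $\beta \in X \iff \forall^* n\, (\beta \in Z_n)$, and $X$ is recovered from $\langle Z_n \mid n<\omega\rangle$ by the $\forall^*$ operation inside $M_\omega[P]$.

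The main obstacle is Step~1, and specifically verifying that a $\kappa_\omega$-indexed family of $V$-side representing functions can be absorbed into $M_\omega$ via $P$. In the vanilla Prikry case one only needed countable closure, and the representing functions had domains of the form $\kappa^n$, so a single sequence $\langle f_n \mid n<\omega\rangle$ sufficed and $j_\omega(\vec f)$ was immediate. Here the index set has size $\kappa_\omega$ and the representing functions have domains $\prod_{m<n} P_\kappa\kappa^{+m}$, so I must be careful that the diagonal object assembling all the $f_\xi$ genuinely has a preimage under $j_\omega$ and that evaluating $j_\omega$ of it at $P\restriction n$ reproduces the intended values; this is exactly the place where the normality and completeness of the $\mathcal{V}_n$, together with the structure of $P$ established in the preceding lemmas, must be invoked.
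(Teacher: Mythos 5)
Your Step 2 and the easy inclusion are exactly the paper's argument, and Step 2 only requires that $M_\omega[P]$ be closed under \emph{countable} sequences: the object you need to capture is $\langle Z_n \mid n < \omega\rangle$, an $\omega$-sequence of elements of $M_\omega$. The genuine problem is Step 1, where you aim instead for closure under $\kappa_\omega$-sequences of ordinals. That statement is false. Indeed $M_\omega[P]\subseteq\bigcap M_n\subseteq M_1$, and $M_1$ is the ultrapower of $V$ by a normal measure on $P_\kappa\kappa$, essentially a normal ultrapower on $\kappa$; since $M_1$ thinks $j_1(\kappa)$ is inaccessible while $\cf^V(j_1(\kappa))\le|j_1(\kappa)|^V\le 2^\kappa<j_1(\kappa)<\kappa_\omega$, any cofinal subset of $j_1(\kappa)$ of order type $\cf^V(j_1(\kappa))$ is a sequence of ordinals of length $<\kappa_\omega$ that cannot lie in $M_1$, hence not in $M_\omega[P]$. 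The mechanism you propose also breaks down at exactly the point you flag: if $\vec g\in V$ has domain $\mu$, then $j_\omega(\vec g)$ has domain $j_\omega(\mu)$ and satisfies $j_\omega(\vec g)(j_\omega(\xi))=j_\omega(g_\xi)$, so recovering $\langle j_\omega(g_\xi)\mid\xi<\mu\rangle$ inside $M_\omega[P]$ requires having $j_\omega\restriction\mu$ there; this is free for $\mu\le\kappa$ (where $j_\omega\restriction\mu$ is the identity) and for $\mu=\omega$, but for $\mu>\kappa$ it is precisely what fails, and no amount of completeness or normality of the $\mathcal{V}_n$ rescues it.

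The fix is to discard the overreach and prove countable closure verbatim as in Lemma \ref{lemma:countable-closure-vanilla-Prikry}: given $\langle\alpha_n\mid n<\omega\rangle$, choose by Lemma \ref{lemma:representing-elements-GS} functions $g_n$ with $\alpha_n=j_\omega(g_n)(p_0,\dots,p_{m_n-1})$; since $j_\omega(\omega)=\omega$, we get $j_\omega(\langle g_n\mid n<\omega\rangle)=\langle j_\omega(g_n)\mid n<\omega\rangle\in M_\omega$, and evaluating at $P\restriction m_n$ recovers the sequence inside $M_\omega[P]$. With that, your Step 2 closes the proof and coincides with the paper's. (The paper does record closure under $\kappa$-sequences, but as a \emph{consequence} of the theorem --- each $M_n$ is closed under $\kappa$-sequences of ordinals, so the intersection is --- not as an ingredient of its proof; and the right bound is $\kappa$, not $\kappa_\omega$.)
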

\begin{proof}
First, we need to show that $M_\omega[P]$ is closed under countable sequences of ordinals. Indeed, if $\langle \alpha_n \mid n < \omega\rangle$ is a countable sequence of ordinals, then for each $n$, pick a function $g_n \colon \prod_{m < m_n} P_\kappa \kappa^{+m}$ such that
\[\alpha_n = j_\omega(g_n)(p_0,\dots, p_{m_n-1}),\]
using Lemma \ref{lemma:representing-elements-GS}. Then, since $j_{\omega}(\langle g_n \mid n < \omega\rangle) = \langle j_\omega(g_n) \mid n < \omega\rangle \in M_\omega$, we get that:
\[\langle \alpha_n \mid n < \omega\rangle = \langle j_\omega(g_n)(P \restriction m_n) \mid n < \omega\rangle \in M_\omega[P].\]

Next, given a set of ordinals $X \in \bigcap M_n$, let us define the sets:
\[Y_n = \{\alpha \mid j_{n,\omega}(\alpha) \in X\}\quad, Z_n = j_{n,\omega}(Y_n)\]
and verify that $\alpha \in X$ iff for all large $n$, $\alpha \in Z_n$.
\end{proof}

One of the interesting features of the Gitik-Sharon forcing relates to the behavior of $\kappa^{+\omega+1}$. In particular, if $\kappa^{+\omega}$ is strong limit cardinal then it is a fixed point of $j_{\omega}$. Thus, under suitable cardinal arithmetic assumption, the Gitik-Sharon forcing cannot introduce certain sufficiently absolute objects.    

Let us start with a simple observation.
\begin{claim}\label{claim:special-tree-in-GS}
Let us assume that $\kappa^{+\omega}$ is a strong limit cardinal. 

There is no special Aronszajn tree on $\kappa^{+\omega+1}$ in $V$ if and only if there is no special Aronszajn tree on the successor of $j_\omega(\kappa)$ in $M_\omega[P]$.
\end{claim}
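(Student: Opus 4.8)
The statement is an equivalence about special Aronszajn trees, and the natural strategy is to exploit that the relevant cardinals are, under the strong-limit assumption, fixed points (or near-fixed points) of $j_\omega$, so that the tree structure transfers rigidly between $V$, $M_\omega$, and $M_\omega[P]$. First I would record the key arithmetic: since $\kappa^{+\omega}$ is a strong limit cardinal, it is a fixed point of $j_\omega$, and by the computation preceding the claim the successor of $j_\omega(\kappa)$ in $M_\omega[P]$ equals $j_\omega(\kappa^{+\omega+1})$. Moreover $M_\omega[P]$ and $M_\omega$ agree on cardinals (shown earlier), so this successor is the same when computed in either model. The plan is to show that a special Aronszajn tree on $\kappa^{+\omega+1}$ exists in $V$ if and only if one exists on $j_\omega(\kappa^{+\omega+1})$ in $M_\omega[P]$, and I would prove each direction by transferring the relevant object along $j_\omega$ and then into or out of the intersection model.

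\textbf{The forward direction.}
Suppose there \emph{is} a special Aronszajn tree $T$ on $\kappa^{+\omega+1}$ in $V$. Then $j_\omega(T)$ is, by elementarity, a special Aronszajn tree on $j_\omega(\kappa^{+\omega+1})$ in $M_\omega$, and since $M_\omega \subseteq M_\omega[P]$ with the same cardinals and the same relevant successor cardinal, $j_\omega(T)$ remains a special Aronszajn tree there: being special is witnessed by a specializing function into a smaller cardinal, and that witness is preserved upward. Conversely, to prove the contrapositive of the forward direction, I would show that if there is no special Aronszajn tree on $\kappa^{+\omega+1}$ in $V$, then there is none in $M_\omega[P]$. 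Here the point is that $\kappa^{+\omega}$ is a fixed point of $j_\omega$, so $V_{\kappa^{+\omega}}$ is fixed, and a tree on $j_\omega(\kappa^{+\omega+1})$ of height $j_\omega(\kappa^{+\omega+1})$ has levels of size at most $\kappa^{+\omega}$; any such tree together with a specializing function is an object that, via Lemma~\ref{lemma:representing-elements-GS} and the closure properties of $M_\omega[P]$, can be pulled back to an object in $V$ of the same type. The upshot is that a specialized Aronszajn tree in $M_\omega[P]$ would produce one in $V$, contradicting the hypothesis.

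\textbf{The main obstacle.}
The hard part will be the direction that extracts a $V$-object from an $M_\omega[P]$-object: a special Aronszajn tree living in the intersection model $\bigcap M_n$ need not a priori sit inside any single $M_n$, nor inside $V$, because $M_\omega[P]$ is a proper forcing extension of $M_\omega$. The key leverage is that the tree and its specializing function have size $j_\omega(\kappa^{+\omega+1})$ but their \emph{ranges and node sets} involve ordinals below a fixed point, so I would argue that the property ``there is no special Aronszajn tree'' is sufficiently absolute between $V$ and $M_\omega[P]$ precisely because $\kappa^{+\omega}$ is strong limit and fixed: the existence of a specializing function is a $\Pi^1$-type statement over $H(\kappa^{+\omega+1})$, and the relevant $H$ is not disturbed by the forcing. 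Concretely I would use that $M_\omega[P]$ is a $j_\omega(\kappa^{+\omega+1})$-c.c.\ extension of $M_\omega$ (from Remark~\ref{remark:chain-condition-from-width}), so any tree and specialization in $M_\omega[P]$ can be approximated by a name of the right size, and then transferred back through $j_\omega$. I expect the bookkeeping on exactly which objects descend to $V$ — as opposed to merely to $M_\omega$ — to be the most delicate point, and I would resolve it by invoking the fixed-point property to identify $V_{\kappa^{+\omega}}$ with its image.
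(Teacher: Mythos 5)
Your setup and the direction from $V$ to $M_\omega[P]$ match the paper: apply $j_\omega$, note $j_\omega(\kappa^{+\omega+1})=\kappa^{+\omega+1}$ is the relevant successor, and use that being special is upwards absolute from $M_\omega$ to $M_\omega[P]$. The problem is the other direction, where you identify as ``the main obstacle'' that a special Aronszajn tree in $M_\omega[P]$ ``need not a priori sit inside any single $M_n$, nor inside $V$.'' This is false, and recognizing that it is false is the entire content of this direction. By Lemma \ref{lemma:BD-easy-direction}, $M_\omega[P]\subseteq\bigcap M_n\subseteq M_0=V$, so the tree $T$ and its specializing function $f\colon T\to j_\omega(\kappa)$ are literally elements of $V$. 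Since $\kappa^{+\omega}$ is a strong limit, $|j_\omega(\kappa)|^V=\kappa^{+\omega}$ while the height of $T$ is $\bigl(j_\omega(\kappa)^+\bigr)^{M_\omega[P]}=\kappa^{+\omega+1}$, which is regular in $V$; hence the very same pair $(T,f)$ witnesses a special Aronszajn tree on $\kappa^{+\omega+1}$ in $V$. That is the whole proof.

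The replacement machinery you propose does not work. ``Approximating the tree by a name of size $j_\omega(\kappa^{+\omega+1})$ using the chain condition and transferring it back through $j_\omega$'' is not a valid operation: an arbitrary name in $M_\omega$ need not lie in the range of $j_\omega$, and even if you produced a name in $M_\omega$ (or an object in $M_\omega$), that is not yet an object of $V$ with the required properties --- the direction you need goes from the extension $M_\omega[P]$ into $V$, and $j_\omega$ points the wrong way. Likewise, Lemma \ref{lemma:representing-elements-GS} represents elements of $M_\omega$, not of $M_\omega[P]$, so it cannot be used to ``pull back'' the tree. None of this is needed once you use the inclusion $M_\omega[P]\subseteq V$; I would encourage you to internalize that the intersection-model viewpoint makes all sets of the generic extension actual sets of $V$, which is precisely the leverage this claim (and Claim \ref{claim:cc-in-GS} after it) is designed to exploit.
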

\begin{proof}
Let us assume that there is such a tree $T$ in $M_\omega[P]$. So, this is a tree of height $\left(j_\omega(\kappa)^{+}\right)^{M_\omega[P]} = \kappa^{+\omega+1}$ such that there is a function $f\colon T \to j_\omega(\kappa)$ which is injective on chains. Since $M_\omega[P] \subseteq V$, $T, f \in V$ and since $|j_\omega(\kappa)|^V = \kappa^{+\omega}$, $T$ is a special Aronszajn tree in $V$. 

The other direction is similar: if $T$ is a special Aronszajn tree in $V$, then $j_\omega(T)$ is a special Aronszajn tree in $M_\omega$. But being special is upwards absolute, so it is special in $M_\omega[P]$ as well. 
\end{proof}

Theorem \ref{thm:BD-for-Gitik-Sharon} implies that $M_\omega[P]$ is closed under $\kappa$-sequences with respect to $V$, since it is an intersection of models which are closed under $\kappa$-sequences of ordinals. The following claim (and its variant) was used in \cite{EskewHayut2021} in order to derive instances of Chang's Conjecture in the Gitik-Sharon extension.
\begin{claim}\label{claim:cc-in-GS}
Let $\kappa^{+\omega}$ be a strong limit. Let us assume that Chang's Conjecture $(\kappa^{+\omega+1}, \kappa^{+\omega})\chang (\rho^+, \rho)$ holds for some $\rho < \kappa$. 

Then $M_\omega[P] \models (j_\omega(\kappa)^+, j_\omega(\kappa))\chang (\rho^+,\rho)$.
\end{claim}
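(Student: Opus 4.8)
The plan is to transfer Chang's Conjecture from $V$ to $M_\omega[P]$ by exploiting two features established above: the model $M_\omega[P]$ sits inside $V$ and is closed under $\kappa$-sequences there (a consequence of Theorem \ref{thm:BD-for-Gitik-Sharon}), and the cardinal arithmetic is pinned down by the strong-limit hypothesis, namely $|j_\omega(\kappa)|^V = \kappa^{+\omega}$, $j_\omega(\kappa^{+\omega}) = \kappa^{+\omega}$, and $\left(j_\omega(\kappa)^+\right)^{M_\omega[P]} = j_\omega(\kappa^{+\omega+1}) = \kappa^{+\omega+1}$ (as in the discussion preceding Claim \ref{claim:special-tree-in-GS}). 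I will use the closure formulation of Chang's Conjecture: $(\mu^+,\mu)\chang(\rho^+,\rho)$ asserts that every algebra on $\mu^+$ in a countable language has, for any fixed predicate of size $\mu$, a subalgebra of size $\rho^+$ meeting that predicate in a set of size $\rho$.

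First I would take an arbitrary structure $\mathfrak{A}\in M_\omega[P]$ with universe $j_\omega(\kappa^{+\omega+1})=\kappa^{+\omega+1}$, a distinguished predicate for $j_\omega(\kappa)$, and countably many Skolem functions, and regard it as a structure of $V$ (legitimate since $M_\omega[P]\subseteq V$). In $V$ the predicate $j_\omega(\kappa)$ has size $\kappa^{+\omega}$, so I fix a bijection $c\colon \kappa^{+\omega}\to j_\omega(\kappa)$ in $V$ and expand $\mathfrak{A}$ to $\mathfrak{A}^+$ by adding $c$, $c^{-1}$, and a predicate for the initial segment $\kappa^{+\omega}=j_\omega(\kappa^{+\omega})$. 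Now I apply the hypothesis $(\kappa^{+\omega+1},\kappa^{+\omega})\chang(\rho^+,\rho)$ in $V$ to $\mathfrak{A}^+$, with distinguished predicate $\kappa^{+\omega}$, obtaining an elementary substructure $\mathfrak{B}\prec\mathfrak{A}^+$ with $|\mathfrak{B}|=\rho^+$ and $|\mathfrak{B}\cap\kappa^{+\omega}|=\rho$. Since $\mathfrak{B}$ is closed under $c$ and $c^{-1}$, the bijection carries $\mathfrak{B}\cap\kappa^{+\omega}$ onto $\mathfrak{B}\cap j_\omega(\kappa)$, so $|\mathfrak{B}\cap j_\omega(\kappa)|^V=\rho$ as well, and the reduct of $\mathfrak{B}$ to the language of $\mathfrak{A}$ is an elementary substructure of $\mathfrak{A}$.

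It remains to move this substructure into $M_\omega[P]$. Let $B$ be the universe of $\mathfrak{B}$; it is a subset of $\kappa^{+\omega+1}$ of $V$-cardinality $\rho^+$, and since $\kappa$ is inaccessible we have $\rho^+<\kappa$. Enumerating $B$ in order type $\rho^+$ gives a $\rho^+$-sequence of ordinals in $V$, which lies in $M_\omega[P]$ by $\kappa$-closure; hence $B\in M_\omega[P]$, and the same closure provides the witnessing bijections showing $|B|^{M_\omega[P]}=\rho^+$ and $|B\cap j_\omega(\kappa)|^{M_\omega[P]}=\rho$ (the $M_\omega[P]$-cardinalities cannot drop below the $V$-cardinalities since $M_\omega[P]\subseteq V$, and cannot exceed them since the enumerations are present). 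Finally, the relation ``$(B,\dots)\prec\mathfrak{A}$'' is first order and hence absolute between $V$ and $M_\omega[P]$ once $B$ and $\mathfrak{A}$ both belong to $M_\omega[P]$, so $B$ is the required subalgebra inside $M_\omega[P]$.

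The main obstacle is exactly this descent step: Chang's Conjecture is not upward absolute to inner models in general, so producing the substructure in $V$ is not by itself enough. What makes it go through is the fortunate smallness $\rho^+<\kappa$ together with the $\kappa$-closure of $M_\omega[P]$ in $V$, which lets a $V$-substructure of size $<\kappa$ be reflected back into $M_\omega[P]$ with its cardinalities intact. A secondary point to handle carefully is the shift in the second coordinate from $\kappa^{+\omega}$, the predicate governed by the $V$-hypothesis, to $j_\omega(\kappa)$, the predicate demanded in $M_\omega[P]$: because $j_\omega(\kappa^{+\omega})=\kappa^{+\omega}$ has been collapsed to cardinality $j_\omega(\kappa)$, the two predicates are interchangeable via $c$, and this is the only place where the strong-limit (fixed-point) arithmetic is essential.
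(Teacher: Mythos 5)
Your proof is correct and follows essentially the same route as the paper: apply Chang's Conjecture in $V$ to the algebra (viewed there via $M_\omega[P]\subseteq V$ and the fixed-point arithmetic $j_\omega(\kappa)^{+M_\omega[P]}=\kappa^{+\omega+1}$), then pull the resulting substructure of size $\rho^+<\kappa$ back into $M_\omega[P]$ using its closure under $\kappa$-sequences. The only difference is that you spell out the bookkeeping for the second coordinate (the bijection between $j_\omega(\kappa)$ and $\kappa^{+\omega}$) and the absoluteness of elementarity, which the paper's two-line proof leaves implicit.
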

\begin{proof}
Let $\mathcal{A}$ be an algebra in $M_\omega[P]$ on $j_{\omega}(\kappa)^+ = \kappa^{+\omega+1}$. Then, by Chang's Conjecture in $V$, there is $\mathcal{B} \prec \mathcal{A}$ of order type $\rho^+$. But $\mathcal{B} \in M_\omega[P]$, by the closure of the model. 
\end{proof}
Let us remark that the hypothesis of the claim follows from the assumption that $\kappa$ is $\kappa^{+\omega+1}$-supercompact.

Claim \ref{claim:cc-in-GS} fails if $\kappa^{+\omega}$ is not a strong limit. In this case, the Gitik-Sharon forcing adds a good scale (see Definition \ref{def:good-scale}), which implies that every instance of Chang's Conjecture as in the claim does not hold. 
\subsection{Scales}
This subsection deals with the applications of the Bukovsk\'y-Dehornoy Theorem to the behavior of scales at a Prikry type extension. This type of application was used in \cite{BenNeriaHayutUnger}, and the basic ideas are taken from there. Here we apply it for the Gitik-Sharon forcing, in order to derive the preservation of a bad scale in the generic extension by the Gitik-Sharon forcing.

The applications in this subsection used only the fact that the generic extension $M_\omega[P]$ is closed under countable sequences.

Scales are one of the basic objects in the Shelah's PCF theory, see \cite{ShelahCardinalArithmetic, AbrahamMagidor}. Let us present a special case.
\begin{definition}[Shelah]
Let $\lambda$ be a singular cardinal of countable cofinality and let $\langle \lambda_n \mid n < \omega\rangle$ be a sequence of regular cardinals converging to $\lambda$. A \emph{scale} on $\prod \lambda_n$ is a sequence of functions $\langle f_\alpha \mid \alpha < \mu\rangle$, $f_\alpha \in \prod \lambda_n$ such that:
\begin{itemize}
\item For all $\alpha < \beta$, $\{n \mid f_\alpha(n) \leq f_\beta(n)\}$ is co-finite. 
\item For all $g \in \prod \lambda_n$, there is $\alpha < \mu$ such that $\{n \mid g(n) \leq f_\alpha(n)\}$ is co-finite.
\end{itemize}
\end{definition}
We will denote the assertion "$\{n \mid f(n) \leq g(n)\}$ is co-finite" by $f \leq^* g$ and we call the set $n$ of values such that $f(n) > g(n)$, the set of violations of the inequality. 

It is clear that every product carries a scale of some length. 
\begin{lemma}[Shelah]
The lengths of any two scales on the same product have the same cofinality.
\end{lemma}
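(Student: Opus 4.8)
The plan is to prove that the cofinality of the length of a scale is actually an invariant of the product $\prod_n \lambda_n$ that makes no reference to the scale at all, namely the cofinality of the preorder $(\prod_n \lambda_n, \leq^{*})$. Let $c$ denote the least cardinality of a subset $D \subseteq \prod_n \lambda_n$ that is $\leq^{*}$-cofinal, i.e.\ such that every $g \in \prod_n \lambda_n$ satisfies $g \leq^{*} d$ for some $d \in D$; this $c$ depends only on $\langle \lambda_n \mid n < \omega\rangle$. I would then show that every scale $\langle f_\alpha \mid \alpha < \mu\rangle$ satisfies $\cf(\mu) = c$. Since the right-hand side is the same for all scales on the product, this immediately yields that any two scales have lengths of equal cofinality.

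First I would check $c \leq \cf(\mu)$. Fix a cofinal sequence $\langle \alpha_i \mid i < \cf(\mu)\rangle$ in $\mu$ and consider the subfamily $\{f_{\alpha_i} \mid i < \cf(\mu)\}$, of cardinality at most $\cf(\mu)$. Given any $g \in \prod_n \lambda_n$, cofinality of the scale gives $\alpha < \mu$ with $g \leq^{*} f_\alpha$; choosing $i$ with $\alpha \leq \alpha_i$ and using that the scale is $\leq^{*}$-increasing yields $g \leq^{*} f_\alpha \leq^{*} f_{\alpha_i}$. Hence this subfamily is $\leq^{*}$-cofinal, so $c \leq \cf(\mu)$.

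The main point is the reverse inequality $c \geq \cf(\mu)$, and here some care is needed because the scale is only assumed $\leq^{*}$-increasing rather than strictly increasing. Suppose toward a contradiction that $D$ is $\leq^{*}$-cofinal with $|D| < \cf(\mu)$. For each $d \in D$ pick $\alpha_d < \mu$ with $d \leq^{*} f_{\alpha_d}$, and set $\beta = \sup_{d \in D} \alpha_d$; since $|D| < \cf(\mu)$ we have $\beta < \mu$. Because the scale is $\leq^{*}$-increasing, $f_{\alpha_d} \leq^{*} f_\beta$ for every $d$, whence $d \leq^{*} f_\beta$ for all $d \in D$. Now define $h \in \prod_n \lambda_n$ by $h(n) = f_\beta(n) + 1$; this lies in the product since each $\lambda_n$ is an infinite cardinal. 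Then $h(n) > f_\beta(n)$ for every $n$, so $h \not\leq^{*} f_\beta$, and by transitivity $h \not\leq^{*} d$ for any $d \in D$ (otherwise $h \leq^{*} d \leq^{*} f_\beta$). This contradicts the cofinality of $D$, so no $\leq^{*}$-cofinal set of size $< \cf(\mu)$ exists and $c \geq \cf(\mu)$.

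Combining the two inequalities gives $c = \cf(\mu)$ for an arbitrary scale, and since $c$ is computed purely from $\prod_n \lambda_n$, the conclusion follows. The only genuinely delicate step is the reverse inequality: the trick that makes it work is that a single function $h$ sitting pointwise above $f_\beta$ simultaneously defeats every element of the putatively cofinal set $D$, precisely because all of $D$ lies $\leq^{*}$-below the single bound $f_\beta$. This is also the step where one must be mindful that $\leq^{*}$ is only a preorder, so no appeal to a strict increase of the scale is available.
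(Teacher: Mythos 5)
Your proof is correct. The paper states this lemma as a background fact of Shelah's and gives no proof of it, so there is nothing to compare against; your argument --- showing that for any scale $\langle f_\alpha \mid \alpha < \mu\rangle$ the cofinality $\cf(\mu)$ equals the cofinality of the preorder $(\prod_n \lambda_n, \leq^*)$, an invariant of the product alone --- is the standard one, and you handle the only delicate point correctly: since the paper's scales are merely $\leq^*$-increasing, you defeat a too-small cofinal family $D$ by bounding it with a single $f_\beta$ and passing to the pointwise successor $h(n) = f_\beta(n)+1$, rather than appealing to any strict increase along the scale.
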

In particular, it makes sense to assume that the length of the scale is always a regular cardinal. 
\begin{theorem}[Shelah]
For every singular cardinal $\lambda$ of countable cofinality, there is a sequence $\langle \lambda_n \mid n < \omega\rangle$ of regular cardinals and a scale $\langle f_\alpha \mid \alpha < \lambda^+\rangle$ on $\prod \lambda_n$.
\end{theorem}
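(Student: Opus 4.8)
The plan is to reduce the statement to a single fact from Shelah's pcf theory and then quote it. A scale of length $\lambda^+$ on $\prod_n \lambda_n$ is exactly a $<^*$-increasing, $<^*$-cofinal sequence of length $\lambda^+$, i.e.\ a witness that $\cf(\prod_n \lambda_n, <^*) = \lambda^+$. So it suffices to produce regular cardinals $\lambda_n \nearrow \lambda$ with $\cf(\prod_n \lambda_n, <^*) = \lambda^+$. I would fix first an arbitrary increasing sequence of regular cardinals $\langle \mu_n \mid n < \omega \rangle$ cofinal in $\lambda$ with $\mu_0 > \omega$, so that $A = \{\mu_n \mid n < \omega\}$ is \emph{progressive}, $|A| < \min A$, and pcf theory applies to it.

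The easy half is the lower bound $\cf(\prod_n \mu_n, <^*) \geq \lambda^+$, which I would prove by a nested diagonalization. Given any family $\{f_\xi \mid \xi < \lambda\} \subseteq \prod_n \mu_n$, define $g(n) = \sup\{f_\xi(n) \mid \xi < \mu_{n-1}\} + 1$. Since $\mu_n$ is regular and $|\mu_{n-1}| = \mu_{n-1} < \mu_n$, we get $g(n) < \mu_n$, so $g \in \prod_n \mu_n$; and since every $\xi < \lambda$ satisfies $\xi < \mu_{n-1}$ for all large $n$, we get $f_\xi <^* g$ for every $\xi$. Hence no family of size $\leq \lambda$ is cofinal, and the cofinality is at least $\lambda^+$. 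In pcf terms this says that every infinite $B \subseteq A$ has $\max \operatorname{pcf}(B) \geq \lambda^+$, so that $J_{<\lambda^+}[A]$ is exactly the ideal $J^{\mathrm{bd}}$ of finite subsets of $A$, and in particular $A \notin J_{<\lambda^+}[A]$.

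The hard half is to pin the cofinality down to exactly $\lambda^+$, and here I would invoke the structure theory of pcf (Shelah; see Abraham--Magidor, \cite{AbrahamMagidor, ShelahCardinalArithmetic}). The crucial input is the theorem that $\lambda^+ \in \operatorname{pcf}(A)$, together with the generator theorem: there is an infinite $B_{\lambda^+} \subseteq A$ (a generator for $\lambda^+$) such that $\operatorname{tcf}\big(\prod B_{\lambda^+} / J^{\mathrm{bd}}\big) = \lambda^+$. Enumerating $B_{\lambda^+}$ increasingly as $\langle \lambda_n \mid n < \omega \rangle$, this is still a sequence of regular cardinals cofinal in $\lambda$ (any infinite subset of $A$ is cofinal in $\lambda$), and the true-cofinality statement is precisely the assertion that $\prod_n \lambda_n$ carries a $<^*$-increasing cofinal sequence of length $\lambda^+$, i.e.\ a scale of length $\lambda^+$. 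The main obstacle is exactly this last input: that $\lambda^+$ is attained in $\operatorname{pcf}(A)$ and admits a generator. This is the genuine heart of pcf theory, resting on the normality of the ideals $J_{<\nu}[A]$, the existence of exact upper bounds, and Shelah's trichotomy theorem; I would not reprove it here but cite it.
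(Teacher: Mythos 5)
The paper does not prove this statement at all: it is quoted as a black-box theorem of Shelah, with the reader referred to \cite{ShelahCardinalArithmetic, AbrahamMagidor}, and nothing in the paper depends on how it is proved. Your proposal is a correct and honest expansion of that citation. The elementary half is right: since $A=\{\mu_n\mid n<\omega\}$ has order type $\omega$ and is cofinal in $\lambda$, every infinite $B\subseteq A$ is again cofinal, your diagonal function $g$ shows that no family of size $\le\lambda$ is cofinal in $\prod B$ modulo any ultrafilter extending the cofinite filter, and hence $J_{<\lambda^+}[A]$ is the ideal of finite sets and $\prod A/\mathrm{fin}$ is $\lambda^+$-directed (which is also what lets you upgrade a cofinal family of size $\lambda^+$ to a $<^*$-increasing scale, a point worth making explicit). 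You then correctly isolate the two genuinely nontrivial inputs --- that $\lambda^+\in\operatorname{pcf}(A)$ and that it admits a generator $B_{\lambda^+}$ with $\operatorname{tcf}\bigl(\prod B_{\lambda^+}/\mathrm{fin}\bigr)=\lambda^+$ --- and cite them rather than reprove them; that is exactly where the content of Shelah's theorem lives, and it is the same content the paper is outsourcing. One small observation: the hypothesis $\mu_0>\omega$ does not by itself make $A$ progressive in the usual sense ($|A|<\min A$ already holds since $|A|=\omega<\mu_0$), but you should also arrange $\mu_0>\operatorname{cf}(\lambda)=\omega$, which you did, so this is only a matter of phrasing. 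In short: correct, and strictly more informative than the paper's treatment, at the cost of importing the pcf machinery by citation.
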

The notion of scales is much wider than this limited definition. For our purposes, we would like to focus on scales of minimal length, and consider ones with a better behaviour.
\begin{definition}\label{def:good-scale}
Let $\mathcal S = \langle f_\alpha \mid \alpha < \lambda^+\rangle$ be scale. An ordinal $\beta < \lambda^+$ of uncountable cofinality is \emph{good} if there is $n < \omega$ and $A \subseteq \beta$ cofinal such that for every $\alpha_0 < \alpha_1$ in $A$, $\{m \mid f_{\alpha_0}(m) > f_{\alpha_1}(m)\}\subseteq n$. 

An ordinal $\beta$ is \emph{bad}, if it is not good.

A scale $\mathcal{S}$ is a \emph{good} if there are club many good points. A scale is \emph{bad} if it has stationarily many bad ordinals. 
\end{definition} 
The notion of bad (and good) points in a scale can be traced back to \cite{ShelahBadPoints}. The connections between the notions of good scales to other anti-compactness principles such as square principles were summarized and investigated, for example, in the seminal paper \cite{CummingsForemanMagidor}.

\begin{remark}[Shelah]\label{remark:same-bad-points}
If $\mathcal{S}, \mathcal{S}'$ are both scales on the same product of length $\lambda^+$, then their sets of good ordinals agree up to a non-stationary error.
\end{remark}
\begin{proof}
Let $\mathcal{S} = \langle f_\alpha \mid \alpha < \lambda^+\rangle, \mathcal{S}' = \langle f'_\alpha \mid \alpha < \lambda^+\rangle$. Let $C$ be the club of all ordinals $\delta$ such that for all $\alpha < \delta$, there is $\alpha' < \delta$ such that $f_\alpha \leq^* f'_{\alpha'}$ and vise verse. Then, an ordinal $\delta \in C$ is good in $\mathcal{S}$ if and only if it is good in $\mathcal{S}'$: Take $A \subseteq \delta$ witnessing $\delta$ being good in $\mathcal{S}$. Then, by induction, pick a sequence of ordinals $\alpha_i < \alpha_i' < \alpha_{i+1} < \cdots$ such that $\alpha_i \in A$ for all $i$ and $f_{\alpha_i} \leq^* f'_{\alpha'_i} \leq^* f_{\alpha_{i+1}}$. 

For every $i$, there is a bound for the error in the inequality $f_{\alpha_i} \leq^* f_{\alpha_i'}$ and $f'_{\alpha_i'} \leq^* f_{\alpha_{i+1}}$, $n_i$. As $\cf \delta > \omega$, there is some $n_*$ such that $A' = \{\alpha_i' \mid n_i = n_*\}$ is unbounded. So, $A'$ witnesses $\delta$ being good from $\mathcal{S}'$.   
\end{proof}
In the paper \cite{GitikSharon2008}, Gitik and Sharon solved Woodin's question whether it is consistent that in a successor of a singular, $\SCH$ fails but there is no weak square. They achieved that by showing that in the model obtained by forcing with the Gitik-Sharon forcing, starting with a supercompact cardinal $\kappa$ such that $2^\kappa > \kappa^{+\omega+1}$, there are bad scales on the successor of $\kappa$. In our framework, this theorem translates to the following statement about $M_\omega[P]$.
\begin{theorem}[Gitik-Sharon]
Assume that in $V$ there is a scale on $\prod \kappa^{+n}$ in which the set of bad points $S$ contains stationarily many ordinals of cofinality $<\kappa$. Then, there is a bad scale in the successor of $j_\omega(\kappa)$ in $M_\omega[P]$. 
\end{theorem}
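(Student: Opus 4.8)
The plan is to transfer the scale through the iteration embedding $j_\omega$ and then control its behaviour in the intersection model $M_\omega[P]=\bigcap M_n$, exploiting the fact that the cofinalities that matter can only move in a very restricted way. Write the given scale as $\mathcal S=\langle f_\alpha\mid \alpha<\kappa^{+\omega+1}\rangle$ on $\prod_n\kappa^{+n}$ with bad set $S$, and set $S^*=\{\beta\in S\mid \omega<\cf\beta<\kappa\}$, which is stationary by hypothesis. First I would push everything up: in $M_\omega$, $j_\omega(\mathcal S)$ is a scale on $\prod_n j_\omega(\kappa^{+n})$ of length $j_\omega(\kappa^{+\omega+1})$, with bad set $j_\omega(S)$, and by elementarity $j_\omega(S^*)=j_\omega(S)\cap\{\beta\mid \omega<\cf^{M_\omega}\beta<j_\omega(\kappa)\}$ is stationary in $M_\omega$. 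Since the successor of $j_\omega(\kappa)$ in $M_\omega[P]$ is exactly $j_\omega(\kappa^{+\omega+1})$, the length is already the right one, and it remains to realise $j_\omega(\mathcal S)$ as a genuine bad scale in $M_\omega[P]$.

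Two facts drive the argument. First, because $M_\omega\subseteq M_\omega[P]$ cofinalities can only drop; by the cardinal computations of this section an $M_\omega$-regular cardinal below $j_\omega(\kappa)$ is \emph{preserved}, whereas every ordinal in the collapsed interval $[j_\omega(\kappa),j_\omega(\kappa^{+\omega}))$ has cardinality $j_\omega(\kappa)$ in $M_\omega[P]$ and hence, since $\cf^{M_\omega[P]}j_\omega(\kappa)=\omega$, cofinality $\omega$ there (pull an $\omega$-cofinal sequence of $j_\omega(\kappa)$ through a surjection). Second, $M_\omega[P]$ is closed under $\omega$-sequences from $V$, so the product $\prod_n j_\omega(\kappa^{+n})$ and its eventual-domination order are computed identically in $M_\omega[P]$ and $V$. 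This invariance is what lets me convert $j_\omega(\mathcal S)$ into an honest scale on a product of $M_\omega[P]$-regular cardinals cofinal in $j_\omega(\kappa)$ — the original factors $j_\omega(\kappa^{+n})$ being singular of cofinality $\omega$ in $M_\omega[P]$, they must be reindexed — and to check cofinality of the reindexed sequence by dominating the new $\omega$-sequences.

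For badness I would fix $\beta\in j_\omega(S^*)$ and put $\delta:=\cf^{M_\omega}\beta$, an uncountable regular cardinal below $j_\omega(\kappa)$; by the dichotomy $\delta$ is preserved, so $\cf^{M_\omega[P]}\beta=\delta$ stays uncountable and $\beta$ remains eligible to be good or bad. Consider the pointwise supremum $s^\beta$, $s^\beta(n)=\sup_{\alpha<\beta}f_\alpha(n)$, which lies in $M_\omega$. Modulo restricting to the club of points where $s^\beta$ is an exact upper bound, badness of $\beta$ in $M_\omega$ means $\rho_n:=\cf^{M_\omega}(s^\beta(n))\neq\delta$ for infinitely many $n$. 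For each such $n$ the dichotomy forces $\cf^{M_\omega[P]}(s^\beta(n))\in\{\rho_n,\omega\}$, and both differ from $\delta$ as $\delta$ is uncountable. Thus the coordinate cofinalities of the relevant upper bound cannot stabilise at $\delta$ in $M_\omega[P]$, which is incompatible with goodness: $\beta$ is bad in $M_\omega[P]$ as well.

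Finally, $j_\omega(S^*)$ must still be stationary in $M_\omega[P]$. Here I would use that points of cofinality $<\kappa$ are continuity points of $j_\omega$; a standard pressing-down argument then shows $j_\omega(S^*)$ is stationary \emph{in $V$}, and since every club of $M_\omega[P]$ is a club of the larger model $V$, it stays stationary in $M_\omega[P]$. The hard part is the middle step: controlling the exact upper bound across the two models. The cofinality dichotomy itself is clean, but to make it bite one must reconcile the pointwise supremum $s^\beta$ with a genuine exact upper bound and argue that goodness of $\beta$ in $M_\omega[P]$ would indeed manifest as eventual coordinate-cofinality $\delta$ of such a bound. This reconciliation is precisely where the $\omega$-closure of $M_\omega[P]$ over $V$ — equivalently, the invariance of $\omega$-indexed products — is essential, and it is the technical heart of the proof.
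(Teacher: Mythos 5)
Your overall strategy runs in the opposite direction from what the conclusion requires, and the step you yourself flag as ``the technical heart'' is precisely the one that is missing. The theorem asks for a bad scale at the successor of $j_\omega(\kappa)$ \emph{in $M_\omega[P]$}, i.e.\ a scale on a product of $M_\omega[P]$-regular cardinals cofinal in $j_\omega(\kappa)$. The object you transfer upward, $j_\omega(\mathcal S)$, lives on $\prod_n j_\omega(\kappa^{+n})$, and every factor $j_\omega(\kappa^{+n})$ lies in the collapsed interval $[j_\omega(\kappa),j_\omega(\kappa^{+\omega}))$ and is singular of cofinality $\omega$ in $M_\omega[P]$; you acknowledge that the product ``must be reindexed'' but never produce the reindexing, and there is no canonical way to do it from $j_\omega(\mathcal S)$ alone. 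The paper's proof goes the other way: it starts from an \emph{arbitrary} scale $\mathcal T=\langle g_\alpha\mid\alpha<\mu_*\rangle$ of $M_\omega[P]$ on $\prod_n\mu_n$ with $\mu_n=j_n(\kappa^{+n+1})$ (these are regular in $M_\omega[P]$ and cofinal in $j_\omega(\kappa)$), projects it \emph{down} to a scale $\bar{\mathcal T}$ on $\prod_n\kappa^{+n+1}$ in $V$ via $h_\alpha(n)=\min\{\beta<\kappa^{+n+1}\mid g_\alpha(n)<j_n(\beta)\}$ (using that $\kappa^{+n+1}$ is a continuity point of $j_n$), and only then compares with $\mathcal S$ through Remark \ref{remark:same-bad-points}. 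That downward projection is the content you would have to supply, and your upward transfer does not substitute for it.

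The badness-preservation step is also not sound as written. First, ``cardinality $j_\omega(\kappa)$ in $M_\omega[P]$ implies cofinality $\omega$'' is false (an ordinal of cardinality $\lambda$ with $\cf\lambda=\omega$ can have any cofinality $\le\lambda$; a surjection does not carry a cofinal sequence of $\lambda$ to a cofinal sequence of the ordinal), so your dichotomy for $\cf^{M_\omega[P]}(s^\beta(n))$ does not hold for arbitrary ordinals in the collapsed interval. Second, goodness of $\beta$ is not characterized by the coordinate cofinalities of the pointwise supremum $s^\beta$, which is a least upper bound but in general not an exact upper bound; the equivalence you invoke is between goodness and the existence of an eub with coordinate cofinalities $\cf\beta$, and reconciling eubs across the two models is exactly what you defer. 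The mechanism the paper actually uses is different and more elementary: $M_\omega[P]=\bigcap M_n$ is closed under $\kappa$-sequences from $V$ (not merely $\omega$-sequences), so if $\beta_\alpha$ with $\cf^V\beta_\alpha<\kappa$ were good for $\mathcal T$ in $M_\omega[P]$, a witnessing cofinal set $A\subseteq\beta_\alpha$ of size $<\kappa$ can be chosen inside $\{\beta_i\mid i<\alpha\}$ and pulled back to $V$, where it would witness goodness of $\alpha$ for $\bar{\bar{\mathcal T}}$ and hence for $\mathcal S$, a contradiction. Note that your argument never uses the hypothesis that the bad points have cofinality $<\kappa$, which is needed precisely so that these witnesses are small enough to move between the models; that alone signals that the approach cannot close as stated. (The final stationarity transfer is the one step that is essentially fine: clubs of $M_\omega[P]$ are clubs in $V$, which is also how the paper concludes.)
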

\begin{proof}
Following the ideas from Claims \ref{claim:special-tree-in-GS} and \ref{claim:cc-in-GS}, we would like to somehow use the scale from $V$ in order to verify that a corresponding scale from $M_\omega[P]$ cannot be good.

Let us note that if $\kappa^{+\omega}$ is a strong limit, this is rather trivial, but we are mostly interested in the case that $2^\kappa$ is large. In this case, $\langle \kappa^{+n} \mid n < \omega\rangle$ is not necessarily cofinal at $j_\omega(\kappa)$, as even $j_1(\kappa)$ might be larger than all of those cardinals. Thus, we study the product of $\mu_n = \left(j_n(\kappa^{+n + 1})\right)^{M_\omega}$.
\begin{claim}
For every $n < \omega$, $\kappa^{+n+1}$ is a continuity point of $j_n$.
\end{claim}
Note that $\mu_n$ is a regular cardinal in $M_n$, and by absoluteness, it is regular in $M_\omega[P]$ as well. Moreover, since $j_{n+1}(\kappa) > j_n(\kappa^{+n+1})$, $\sup \mu_n = j_\omega(\kappa)$. 

Pick a scale on $\prod \mu_n$, $\mathcal T = \langle g_\alpha \mid \alpha < \mu_*\rangle$. Even though $\mu_n$ might not be regular in $V$, the two defining properties of a scale (weakly increasing and cofinal) holds in $V$ for $\mathcal{T}$ since $M_\omega[P]$ is closed under $\omega$-sequences or ordinals.

We would like to collapse $\mathcal T$ to a scale in $V$. Recall that for all $n$, $\sup j_n\image \kappa^{+n+1} = \mu_n$. Let us define for every $\alpha$, 
\[h_\alpha(n) = \min \{\beta < \kappa^{+n+1} \mid g_\alpha(n) < j_n(\beta)\},\]
and let $\bar{\mathcal T}$ be $\langle h_\alpha \mid \alpha < \mu_*\rangle$.  
\begin{claim}
$\bar{\mathcal T}$ is a scale in $V$ on $\prod \kappa^{+n+1}$. 
\end{claim}
\begin{proof}
Let $\alpha < \beta$. Since $g_\alpha(n) \leq g_\beta(n)$ for almost all $n$, we conclude that $h_\alpha(n) \leq h_\beta(\alpha)$ for almost all $n$. 

Let $h$ be arbitrary. Then, let us look at the $\tilde{h}(n) = j_n(h(n))$. Since $M_\omega[P]$ is closed under countable sequences, $\tilde{h} \in M_\omega[P]$. Therefore, there is $\alpha$ such that $\tilde{h}(n) \leq g(\alpha)$ for almost all $n$. We conclude that $h(n) \leq h_\alpha(n)$ for almost all $n$.\end{proof}
In particular, $\cf^V \mu_* = \kappa^{+\omega+1}$. Fix in $V$ an arbitrary continuous sequence of ordinals $\vec\beta=\langle \beta_i \mid i < \kappa^{+\omega+1}\rangle$ cofinal at $\mu_*$, and let $\bar{\bar{\mathcal{T}}} = \langle h_{\beta_i} \mid i < \kappa^{+\omega+1}\rangle$. Fix a scale $\mathcal{S}$ as in the hypothesis of the theorem. Let $C$ be the club from the proof of Remark \ref{remark:same-bad-points} for the scales $\mathcal{S}, \bar{\bar{\mathcal{T}}}$. 
\begin{claim}
For every $\alpha \in C$ of cofinality $<\kappa$, if $\alpha$ is bad for $\mathcal{S}$ in $V$ then $\beta_\alpha$ it is bad for $\mathcal{T}$ in $M_\omega[P]$. 
\end{claim}
\begin{proof}
Assume that this is not the case. Since $\cf \alpha^V < \kappa$ and the sequence $\vec\beta$ is continuous, $\cf^V \beta_\alpha < \kappa$. By the closure of $M_\omega[P]$, $\cf^{M_\omega[P]}\beta_\alpha < \kappa$ and moreover, there is $B \subseteq \beta_\alpha$ in $M_\omega[P]$, cofinal and contained in $\{\beta_i \mid i < \alpha\}$. Pick $A \subseteq B$ witnessing $\beta_\alpha$ being good for $\mathcal{T}$. 

Then, $A$ witnesses that $\beta_\alpha$ is good for $\bar{\mathcal T}$, which means that $\bar{A} = \{i \mid \beta_i \in A\}$ witness that $\alpha$ is good for $\bar{\bar{\mathcal{T}}}$  
and thus by Remark \ref{remark:same-bad-points}, $\alpha$ is good from $\mathcal{S}$.  
\end{proof}  
Finally, if $D$ is a club in $M_\omega[P]$ then $D\in V$ and it is a closed and unbounded subset of the ordinal $\mu_*$. We conclude that $\{\alpha \mid \beta_\alpha \in D\}\in V$ is a club in $\kappa^{+\omega+1}$. Together with the previous claim, we see that the set of bad points in $\mathcal{T}$ has to be stationary in $M_\omega[P]$.
\end{proof}
The standard proofs of properties of scales in Prikry type extensions need some bounding lemmas. Looking closely into the arguments, one can identify the parallel parts: the bounding lemmas correspond to the construction of the scale $\bar{\bar{\mathcal{T}}}$ from $\mathcal{T}$. Nevertheless, using the Bukovsk\'y-Dehornoy method, we do not need to talk about names and use the strong Prikry Property is order to partially realize them. 

Finally, we can prove:
\begin{theorem}[Gitik-Sharon]
Let $\kappa$ be a supercompact cardinal such that $2^\kappa \geq \kappa^{+\omega+2}$. Then, there is a generic extension in which $\kappa$ is a strong limit singular, there is a bad scale on $\kappa^{+}$ and $2^\kappa > \kappa^+$.
\end{theorem}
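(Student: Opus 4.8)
The plan is to verify the three conclusions directly inside the intersection model $M_\omega[P]$ and then read them off for a genuine generic extension of $V$ through the elementarity of $j_\omega\colon V\to M_\omega$. By Theorem~\ref{thm:BD-for-Gitik-Sharon}, $M_\omega[P]=\bigcap_n M_n$ is a generic extension of $M_\omega$, and since $j_\omega$ is elementary, $M_\omega$ satisfies the hypotheses of the theorem with $j_\omega(\kappa)$ in place of $\kappa$. The Gitik--Sharon forcing is (cone) homogeneous, so it suffices to show that $M_\omega[P]$ satisfies: $j_\omega(\kappa)$ is a strong limit singular cardinal, $2^{j_\omega(\kappa)}>j_\omega(\kappa)^{+}$, and there is a bad scale on $j_\omega(\kappa)^{+}$; pulling back along $j_\omega$ then yields the corresponding statement forced over $V$.

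Singularity is already available: the critical sequence $\langle j_n(\kappa)\mid n<\omega\rangle$ is cofinal in $j_\omega(\kappa)$ and, exactly as in the computation of $\cf^{M_\omega[P]}j_\omega(\kappa^{+n})=\omega$, is recoverable from $P$ (e.g.\ as $\sup(p_m\cap j_\omega(\kappa))$), so $\cf^{M_\omega[P]}j_\omega(\kappa)=\omega$. For the strong limit property I would repeat the Prikry argument establishing $M_\omega[P]\cap V_{j_\omega(\kappa)}\subseteq M_\omega$: any $X$ of rank below $j_\omega(\kappa)$ has rank below some $j_n(\kappa)=\crit j_{n,\omega}$, hence $X\in M_n$ and $X=j_{n,\omega}(X)\in M_\omega$. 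Thus no new bounded subsets of $j_\omega(\kappa)$ appear, and since $j_\omega(\kappa)$ is inaccessible in $M_\omega$ it remains a strong limit in $M_\omega[P]$.

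For the continuum at $j_\omega(\kappa)$, recall that the successor of $j_\omega(\kappa)$ in $M_\omega[P]$ is $j_\omega(\kappa^{+\omega+1})$ and that, by Remark~\ref{remark:chain-condition-from-width}, $M_\omega[P]$ is a $j_\omega(\kappa^{+\omega+1})$-c.c.\ extension, so every $M_\omega$-cardinal $\ge j_\omega(\kappa^{+\omega+1})$ is preserved and $j_\omega(\kappa^{+\omega+2})=(j_\omega(\kappa)^{++})^{M_\omega[P]}$. By elementarity $2^{j_\omega(\kappa)}\ge j_\omega(\kappa^{+\omega+2})$ in $M_\omega$, and an injection of $j_\omega(\kappa^{+\omega+2})$ into $\power(j_\omega(\kappa))^{M_\omega}\subseteq\power(j_\omega(\kappa))^{M_\omega[P]}$ lies in $M_\omega[P]$; hence $2^{j_\omega(\kappa)}\ge j_\omega(\kappa)^{++}>j_\omega(\kappa)^{+}$. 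Together with the previous paragraph this is precisely the failure of $\SCH$ at the strong limit singular cardinal $j_\omega(\kappa)$.

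The bad scale I would obtain from the Gitik--Sharon bad-scale theorem proved above, which produces a bad scale on $j_\omega(\kappa)^{+}$ out of a scale in $V$ on $\prod_n\kappa^{+n}$ whose set of bad points of cofinality $<\kappa$ is stationary. The entire theorem therefore reduces to verifying this \emph{ground-model} hypothesis, i.e.\ the failure of approachability at $\kappa^{+\omega+1}$ (by Remark~\ref{remark:same-bad-points}, having one scale with a stationary bad set is equivalent to $\neg\mathrm{AP}_{\kappa^{+\omega+1}}$), and this is the step I expect to be the main obstacle. The bare hypotheses do not decide it: Solovay's theorem on $\SCH$ above a supercompact forces $(\kappa^{+\omega})^{\aleph_0}=\kappa^{+\omega+1}$, so the PCF structure of $\prod_n\kappa^{+n}$ is tame and no bad scale can be extracted from a long-scale argument alone. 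I would therefore interpose, before the iteration, a preparatory forcing that makes approachability fail at $\kappa^{+\omega+1}$ (kills $\square^{*}_{\kappa^{+\omega}}$) while preserving both the supercompactness of $\kappa$---so that the measures $\mathcal V_n$ and hence the whole iteration survive---and the inequality $2^{\kappa}\ge\kappa^{+\omega+2}$. The crux is the simultaneous preservation of full supercompactness of $\kappa$ and of non-approachability at $\kappa^{+\omega+1}$: compactness-type reflection tends to reinstate approachability, so showing that the two coexist (and that the preparation leaves the hypotheses of the iteration intact) is the genuinely approachability-theoretic heart of the Gitik--Sharon result, and the main difficulty of the proof.
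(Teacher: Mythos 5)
Your overall architecture matches the paper's: verify everything inside $M_\omega[P]$ and transfer to $V$ by elementarity. Two comments on that transfer. First, you do not need (and have not established) any homogeneity of the forcing: by Bukovsk\'y's theorem $M_\omega[P]$ is a generic extension of $M_\omega$ by some poset, so by the forcing theorem \emph{some condition} forces the conjunction ``$j_\omega(\kappa)$ is a strong limit singular, $2^{j_\omega(\kappa)}>j_\omega(\kappa)^+$, and there is a bad scale on $j_\omega(\kappa)^+$''; elementarity of $j_\omega$ then gives a poset and a condition in $V$ forcing the same about $\kappa$, and one forces below that condition. This is exactly the paper's (two-line) proof. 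Your verifications of singularity, the strong limit property, and $2^{j_\omega(\kappa)}>j_\omega(\kappa)^+$ inside $M_\omega[P]$ are correct and agree with the computations already carried out in Section \ref{section:GS-forcing}.

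The genuine gap is in your last paragraph. The hypothesis of the bad-scale theorem --- a scale on $\prod_n\kappa^{+n}$ with stationarily many bad points of cofinality $<\kappa$ --- \emph{does} follow outright from the supercompactness of $\kappa$ (indeed from $\kappa^{+\omega+1}$-supercompactness); no preparatory forcing is needed, and interposing one would leave the theorem unproved as stated, since it asserts the conclusion from supercompactness plus $2^\kappa\ge\kappa^{+\omega+2}$ alone. The standard reflection argument: let $\lambda=\kappa^{+\omega+1}$, let $j\colon V\to M$ be a $\lambda$-supercompact embedding, let $\vec f$ be any scale on $\prod_n\kappa^{+n}$ of length $\lambda$, and set $\delta=\sup j\image\lambda$. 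Then $\cf^M(\delta)=\lambda<j(\kappa)$, and $\delta$ is a bad point of $j(\vec f)$ in $M$: if it were good, the interleaving argument of Remark \ref{remark:same-bad-points} (using $\cf\delta>\omega$ to stabilize the finite error sets against the cofinal set $j\image\lambda$) would show that the coordinatewise suprema $\sup j\image\kappa^{+m}$ all have the \emph{uniform} cofinality $\lambda$ for large $m$, whereas $\cf^M(\sup j\image\kappa^{+m})=\kappa^{+m}$. Since $\delta\in j(C)$ for every club $C\subseteq\lambda$ in $V$, the set of bad points of cofinality $<\kappa$ is stationary. Your stated obstruction conflates two different sources of badness: Solovay's theorem does force $\SCH$ above $\kappa$ and hence blocks the ``long scale'' route, but goodness of points is a structural, approachability-type property orthogonal to the value of $(\kappa^{+\omega})^{\aleph_0}$, and it is precisely this reflection argument --- not a preparation --- that Gitik and Sharon use. (Compare the analogous remark after Claim \ref{claim:cc-in-GS}, where the Chang's-Conjecture hypothesis is likewise derived from $\kappa^{+\omega+1}$-supercompactness.)
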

\begin{proof}
In $M_\omega$, there is a forcing notion adding $P$, by Bukovsk\'y Theorem, and since $M_\omega[P]$ satisfies the conclusion of the theorem (the failure of $\SCH$ and the existence of a bad scale), by the forcing theorem, there is a condition forcing that. 

By elementarity, the same holds in $V$: there is a forcing notion and a condition in it forcing the failure of $\SCH$ together with the existence of a bad scale on $\kappa$.  
\end{proof}

By combining interleaved collapses in the Gitik-Sharon forcing, one can obtain a model in which $\SCH$ fails at $\aleph_{\omega^2}$ and there is a bad scale on $\aleph_{\omega^2}$. In the next section we will address the issue of adding collapses to a forcing notion in which the Bukovsk\'y-Dehornoy Theorem holds, thus allowing us to obtain the full result.  
\section{Combining iterated ultrapowers with forcings}\label{section:EBPF}
In this section, we will give a couple of examples for extensions of an iterated ultrapower, $M_\omega$, using an object which can be obtained only in a generic extension of $V$. While an additional level of complexity is added to the whole process, still a few key components are preserved. Our goal model can be presented as the intersection of a (definable) decreasing sequence of models in a generic extension, so many of the arguments from the previous sections will be applicable here as well. 

The most notable change is that we are losing the elementary embeddings between the models in the chain. This makes the proof of the parallel intersection theorems more involved.   

We will deal with two main cases: the Extender Based Prikry Forcing and adding interleaved collapses.
\subsection{Extender Based Prikry forcing}\label{subsection:BD-for-EBPF}
In this section, we follow closely results and ideas from Merimovich, \cite{Merimovich2007, Merimovich2021} in order to derive a Bukovsk\'y-Dehornoy theorem for extender based Prikry forcing.
 
There are several definitions for \emph{extenders} in the literature, see for example \cite{GitikHandbook,Kanamori}. For our purposes, a $(\kappa,\lambda)$-extender $E$ is a combinatorial (set) object coding an elementary embedding $j \colon V \to M$ with $\crit j = \kappa$, $j(\kappa) > \lambda$, $M$ is closed under sequences of length $\kappa$ and for every $x \in M$ there are $f \colon \kappa \to V$ in $V$ and $\gamma < \lambda$ such that $j(f)(\gamma) = x$. In particular, the width of the embedding $j$ is $\leq\kappa$. 

\begin{lemma}[$\kappa$-directness]\label{lemma:kappa-directness}
Let $E$ be a $(\kappa,\lambda)$-extender and $j\colon V \to M$ be the derived elementary embedding.
 
For every $A\subseteq \lambda$, $|A| \leq \kappa$ there is $\gamma < \lambda$ such that for every $\delta \in A$ there is $f_\delta \colon \kappa \to \kappa$ such that $j(f_\delta)(\gamma) = \delta$. 
\end{lemma}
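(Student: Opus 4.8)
The plan is to exploit the single-generator property of $E$ --- that every element of $M$ is of the form $j(f)(\gamma)$ for a single function $f \colon \kappa \to V$ in $V$ and a single seed $\gamma < \lambda$ --- together with the $\kappa$-closure of $M$. The whole issue is to produce \emph{one} $\gamma$ that works uniformly for every $\delta \in A$, and the mechanism for this is to package the entire set $A$ into a single object of $M$ and then generate that object from one seed.

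First I would fix, in $V$, a surjection $e \colon \kappa \to A$ (possible since $|A| \le \kappa$) and regard it as a function $s = e \colon \kappa \to \lambda$. Because $M$ is closed under $\kappa$-sequences and each value $e(i)$ is an ordinal (hence an element of $M$), the sequence $s$ belongs to $M$. I deliberately index $A$ by $\kappa$ rather than by its order type: the order type of $A$ may exceed $\kappa$, whereas I need the indices to lie below $\crit j = \kappa$ so that $j$ fixes them.

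Next I would apply the generating property of $E$ to $s \in M$ to obtain $f \colon \kappa \to V$ in $V$ and a single $\gamma < \lambda$ with $j(f)(\gamma) = s$; this $\gamma$ is the seed the lemma asks for. To recover an individual $\delta \in A$, I choose $i_\delta < \kappa$ with $e(i_\delta) = \delta$ and define $f_\delta \colon \kappa \to \kappa$ by setting $f_\delta(\xi) = f(\xi)(i_\delta)$ whenever $f(\xi)$ is a function with $i_\delta \in \dom f(\xi)$ and $f(\xi)(i_\delta) < \kappa$, and $f_\delta(\xi) = 0$ otherwise; the final truncation guarantees that $f_\delta$ takes values in $\kappa$. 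Elementarity of $j$ (equivalently, {\L}o\v{s}'s theorem for $E$) then yields $j(f_\delta)(\gamma) = j(f)(\gamma)(j(i_\delta))$, and since $i_\delta < \kappa = \crit j$ we have $j(i_\delta) = i_\delta$; as $s = j(f)(\gamma)$ is a function with domain $\kappa$, with $i_\delta \in \dom s$ and $s(i_\delta) = \delta < \lambda < j(\kappa)$, all three case conditions hold at $\gamma$, so the value is exactly $\delta$.

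The only genuine obstacle is producing the uniform seed, and it is dissolved by the two ingredients above: $\kappa$-closure lets $A$ be coded as the single element $s \in M$, and the single-generator property of the extender applied to $s$ delivers one $\gamma$ for all of $A$ at once. Everything after that --- the component-extraction step and the truncation into $\kappa$ --- is routine bookkeeping through elementarity, the one point to watch being that the indices must be chosen below $\kappa$ so that they are fixed by $j$.
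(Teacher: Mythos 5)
Your proof is correct and follows essentially the same route as the paper: use $\kappa$-closure of $M$ to place an enumeration of $A$ into $M$, apply the extender's generating property to that single sequence to get one seed $\gamma$, and then extract each $\delta$ componentwise via elementarity. The only difference is that you spell out the bookkeeping (truncation into $\kappa$, fixing of indices below the critical point) that the paper leaves implicit.
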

\begin{proof}
Since $M$ is closed under $\kappa$-sequences, $A\in M$ and moreover, some enumeration $\langle \delta_i \mid i < \kappa\rangle \in M$. Thus, there is $\gamma < \lambda$ and $g \colon \kappa \to V$ such that $j(g)(\gamma) = \langle \delta_i \mid i < \kappa\rangle$. 

So, define $f_{\delta_i}(\zeta) = g(\zeta)(i)$ and the result follows from elementarity.
\end{proof}

As a definable elementary embedding, the elementary embedding derived from an extender can be iterated and the direct limit of such an iteration is well founded. 

Fix a $(\kappa, \lambda)$-extender $E$, and let $\langle M_n, j_{n,m} \mid n \leq m \leq \omega\rangle$ be the corresponding iteration of $E$. 

\begin{lemma}\label{lemma:representing-elements-EBPF-Momega}
For every $x \in M_\omega$ there are $n < \omega$, $f \colon \kappa^{n} \to V$ and $\gamma_0, \dots, \gamma_{n-1}$, such that $\gamma_k < j_k(\lambda) < j_{k+1}(\kappa)$ and $x = j_\omega(f)(\gamma_0,\dots, \gamma_{n-1})$. 
\end{lemma}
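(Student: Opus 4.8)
The plan is to adapt the two-step representation results already established for the Prikry and Gitik--Sharon iterations (the earlier Claim and Lemma \ref{lemma:representing-elements-GS}) to the extender setting, the only difference being that each ultrapower step is now generated by a single ordinal below the length of the extender rather than by the critical point itself. The first move is to reduce to a finite stage: since $M_\omega$ is the direct limit of the linear system $\langle M_n, j_{n,m}\rangle$, every $x \in M_\omega$ has the form $x = j_{n,\omega}(\bar x)$ for some $n < \omega$ and $\bar x \in M_n$. So it suffices to produce the representation for elements of $M_n$ and then push it through $j_{n,\omega}$.

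The core step is an induction on $n$ showing that every $\bar x \in M_n$ can be written as $\bar x = j_n(f)(\gamma_0, \dots, \gamma_{n-1})$ with $f \colon \kappa^n \to V$ and $\gamma_k < j_k(\lambda)$ for each $k < n$. The base case $n = 0$ is trivial. For the inductive step, note that $j_{n,n+1}$ is the ultrapower embedding by $j_n(E)$, which $M_n$ sees as a $(j_n(\kappa), j_n(\lambda))$-extender; hence the width property (equivalently Lemma \ref{lemma:kappa-directness}), applied inside $M_n$, yields for any $y \in M_{n+1}$ a function $g \in M_n$, $g \colon j_n(\kappa) \to M_n$, and a generator $\gamma_n < j_n(\lambda)$ with $y = j_{n,n+1}(g)(\gamma_n)$. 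Applying the induction hypothesis to $g$ gives $g = j_n(h)(\gamma_0, \dots, \gamma_{n-1})$, where $h \colon \kappa^n \to V$ has values that are functions from $\kappa$ into $V$. The crucial observation is that each earlier generator satisfies $\gamma_k < j_k(\lambda) < j_{k+1}(\kappa) \le j_n(\kappa) = \crit j_{n,n+1}$ (using that the extender $j_k(E)$ sends its critical point above $j_k(\lambda)$, together with the computation of critical points as in Lemma \ref{lemma:critical-points-GS}), so $j_{n,n+1}$ fixes $\gamma_0, \dots, \gamma_{n-1}$. Consequently $j_{n,n+1}(g) = j_{n+1}(h)(\gamma_0, \dots, \gamma_{n-1})$, and setting $f(\alpha_0, \dots, \alpha_n) = h(\alpha_0, \dots, \alpha_{n-1})(\alpha_n)$, elementarity delivers $y = j_{n+1}(f)(\gamma_0, \dots, \gamma_n)$, which completes the induction.

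Finally I would transfer to $M_\omega$. Given $x = j_{n,\omega}(\bar x)$ with $\bar x = j_n(f)(\gamma_0, \dots, \gamma_{n-1})$, apply $j_{n,\omega}$: since $j_{n,\omega} \circ j_n = j_\omega$ and, again because $\gamma_k < j_{k+1}(\kappa) \le j_n(\kappa) = \crit j_{n,\omega}$, each $\gamma_k$ is fixed, one obtains $x = j_\omega(f)(\gamma_0, \dots, \gamma_{n-1})$ with $\gamma_k < j_k(\lambda) < j_{k+1}(\kappa)$, exactly as required.

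The main obstacle is bookkeeping rather than conceptual: one must verify that the generators introduced at earlier stages lie below the critical point of every later ultrapower step, so that they are not displaced when the representations are folded together and then pushed into the direct limit. This hinges precisely on the chain of inequalities $\gamma_k < j_k(\lambda) < j_{k+1}(\kappa)$, which is exactly the constraint recorded in the statement and which propagates through the iteration from the extender's defining property $j(\kappa) > \lambda$.
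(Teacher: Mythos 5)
Your argument is correct: the paper states Lemma \ref{lemma:representing-elements-EBPF-Momega} without proof (as it does the analogous representation facts for the Prikry and Gitik--Sharon iterations), and your induction over the finite stages --- using elementarity to transfer the extender's width/generator property to $M_n$, noting that earlier generators lie below $\crit j_{n,n+1} = j_n(\kappa)$ because $\gamma_k < j_k(\lambda) < j_{k+1}(\kappa)$, uncurrying, and then pushing through $j_{n,\omega}$, which also fixes the generators --- is exactly the standard argument the paper is implicitly invoking. No gaps: the only points needing care (that the representation property of $j_n(E)$ is first-order in the parameter $E$ and hence transfers to $M_n$, and that the inequalities $j_k(\lambda) < j_{k+1}(\kappa)$ follow by elementarity from $j(\kappa) > \lambda$) are handled correctly in your write-up.
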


\begin{lemma}\label{lemma:width-of-iteration}
The width of the embedding $j_{n,m}$ for $n < m \leq \omega$ is $\leq j_{n}(\kappa)$.
\end{lemma}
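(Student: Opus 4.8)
The plan is to avoid composing the single-step widths and instead read the bound directly off the representation of elements provided by Lemma~\ref{lemma:representing-elements-EBPF-Momega}, using the functional characterisation of width recalled in the footnote to Remark~\ref{remark:chain-condition-from-width}: the width of $j_{n,m}$ is $\leq \mu$ precisely when every $x \in M_m$ can be written as $j_{n,m}(f)(a)$ for some $f \in M_n$ with $|\dom f| \leq \mu$ and some $a \in M_m$. The key observation is that the tail segment $\langle M_{n+k}, j_{n+k,n+l} \mid k \leq l \leq \omega\rangle$ is, internally to $M_n$, nothing but the iteration of the $(j_n(\kappa), j_n(\lambda))$-extender $j_n(E)$ over $M_n$. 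Hence the statement of Lemma~\ref{lemma:representing-elements-EBPF-Momega}, applied inside $M_n$ to $j_n(E)$ (equivalently, obtained by applying $j_n$ to the version of that lemma holding in $V$), describes every element of $M_m$ for $n < m \leq \omega$ in terms of functions coming from the domain model $M_n$.

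Concretely, I would argue as follows. By the (internal) representation, every $x \in M_m$ has the form $x = j_{n,m}(f)(\gamma_0, \dots, \gamma_{l-1})$, where $f \in M_n$ is a function with $\dom f = (j_n(\kappa))^l$ for some $l < \omega$, and the $\gamma_k$ are the generators of the tail iteration, each lying below the image of $j_n(\lambda)$ at its stage, hence below the next critical point and so below $j_m(\kappa) = j_{n,m}(j_n(\kappa))$. In $M_n$ the domain $(j_n(\kappa))^l$ has cardinality $j_n(\kappa)$, while the finite tuple $a = (\gamma_0, \dots, \gamma_{l-1})$ is a \emph{single} element of $M_m$ lying in $(j_{n,m}(j_n(\kappa)))^l = \dom\bigl(j_{n,m}(f)\bigr)$. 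This is exactly the functional characterisation of width with $\mu = j_n(\kappa)$, so the width of $j_{n,m}$ is $\leq j_n(\kappa)$.

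The reason for routing the argument through the representation lemma, rather than through an induction on $m$, is the genuine obstacle here: the single-step widths do \emph{not} stay bounded by $j_n(\kappa)$, since the width of $j_{m',m'+1}$ is $j_{m'}(\kappa)$, which grows with $m'$, and a naive composition of these would yield only the useless bound $j_{m-1}(\kappa)$. The representation lemma circumvents this precisely because it uses one function from the domain model $M_n$ together with finitely many generators: the later, larger generators are all collected into the single finite-tuple parameter $a \in M_m$, rather than being absorbed into the function's domain. The only points requiring care are the uniform treatment of $m < \omega$ and $m = \omega$, and the verification that each generator $\gamma_k$ lies below $j_m(\kappa)$ so that $a$ is a legitimate argument of $j_{n,m}(f)$; both are immediate from the proof of Lemma~\ref{lemma:representing-elements-EBPF-Momega}, where the $k$-th generator lies below $j_k(\lambda) < j_{k+1}(\kappa) \leq j_m(\kappa)$.
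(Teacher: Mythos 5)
Your proof is correct, but it takes a genuinely different route from the paper's. The paper proves the lemma by induction on $m$ using the \emph{covering} form of width: given $x \in M_{m+1}$, it first covers $x$ by a set $y \in M_m$ with $|y| \leq j_m(\kappa)$, then covers the single object $y$ by a set $z \in M_n$ of size $\leq j_n(\kappa)$ all of whose members have size $\leq j_n(\kappa)$, and finally takes $w = \bigcup z$, which still has size $\leq j_n(\kappa)$ in $M_n$ and satisfies $x \in j_{n,m+1}(w)$; the case $m = \omega$ then reduces to finite stages via the direct limit. That union trick is exactly how the paper defuses the obstacle you correctly identify (the single-step widths $j_{m'}(\kappa)$ grow with $m'$): the large intermediate cover is a \emph{single} element of $M_m$, so covering it from $M_n$ costs only $j_n(\kappa)$. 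Your argument instead reads the bound off the representation of elements of the tail iteration, viewed inside $M_n$ as the iteration of $j_n(E)$, together with the functional characterisation of width from the footnote; the growing generators are absorbed into the finite-tuple parameter $a \in M_m$ rather than into the function's domain. What each approach buys: yours is shorter and conceptually transparent, but it leans on Lemma~\ref{lemma:representing-elements-EBPF-Momega} relativised to $M_n$ and extended to finite stages $m < \omega$ --- a statement the paper asserts without proof and only for $M_\omega$, so you are implicitly importing (the standard proof of) that representation; the paper's induction is self-contained, using only the single-step width hypothesis on $E$ and elementarity. The two arguments are morally dual presentations of the same fact, corresponding to the two equivalent definitions of width given in the footnote.
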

\begin{proof}
First, for $m = n+1$, this follows from our initial hypothesis on the extender $E$, using elementarity.

Let us assume that the width of the embedding $j_{n,m}$ is $\leq j_n(\kappa)$. Let $x \in M_{m+1}$. Since the width of $j_{m,m+1}$ is $j_m(\kappa)$, there is $y \in M_m$ such that $|y| \leq j_m(\kappa)$ and $x \in j_{m,m+1}(y)$. 

Since the width of $j_{n,m}$ is $j_n(\kappa)$, we conclude that there is $z \in M_n$ such that $|z| \leq j_n(\kappa)$ and $y \in j_{n,m}(z)$. By elementarity, taking $z' = \{y' \in z \mid |y'| \leq j_n(\kappa)\}$ we conclude that $y \in j_{n,m}(z')$. Finally, let $w = \bigcup z'$ --- this set is a union of $\leq j_n(\kappa)$ many sets of cardinality $\leq j_n(\kappa)$, so $|w| \leq j_n(\kappa)$. 

So, $x \in j_{n,m+1}(w)$ since $x \in j_{m,m+1}(y)$ and $j_{m,m+1}(y) \in j_{m,m+1}(j_{n,m}(z))$.

So, by induction, the claim holds for all $n < m < \omega$. 

Now, let us deal with the width of $j_{n,\omega}$. If $x \in M_\omega$, then there is $m > n$ (without loss of generality) and $\bar x \in M_m$ such that $x = j_{m,\omega}(\bar x)$. Since $j_{n,m}$ has width $j_n(\kappa)$, there is $y \in M_n$ such that $\bar x \in j_{n,m}(y)$. 

So, $x \in j_{n,\infty}(y)$, as wanted. 
\end{proof}

Merimovich proved that one can obtain a generic for the extender based Prikry forcing (defined in a proper way), but forcing with the direct extension order taking an iterated ultrapower and adding the generator. The following theorem works the details for the Bukovsk\'y-Dehornoy intersection theorem for this forcing, again without explicitly defining the forcing notion.
\begin{theorem}\label{thm:BD-for-EBPF}
Let $H \subseteq \mathbb{P}^*$, be a $V$-generic filter for the forcing $\mathbb{P}^*=\{f \colon \lambda \to \kappa^{<\omega} \mid |f| \leq \kappa\}$, ordered by reverse inclusion. 

Let us define inductively
\[H_{n+1} = \text{upwards closure of } \{j_{n,n+1}(p) ^\smallfrown \{\langle j_{n,n+1}(\alpha),\alpha\rangle \mid \alpha \in \dom p\} \mid p \in H_n\}\]

Let $G \colon j_{\omega}(\lambda) \times \omega \to j_\omega(\kappa)$ be defined by:
\[\begin{matrix}
G(\alpha, n) = \gamma \iff & & \\ 
\exists m < \omega, & p \in H_m, & \alpha \in \dom j_{m,\omega}(p), &  n \in \dom (j_{m,\omega}(p)(\alpha)) \\ 
\gamma = j_{m,\omega}(p)(\alpha)(n) 

\end{matrix}
\] 
Then $M_\omega[G] = \bigcap M_n[H_n]$. Moreover, in $M_\omega[G]$ $\cf j_\omega(\kappa) = \omega$ and $2^{j_\omega(\kappa)} = j_\omega(\lambda)$.
\end{theorem}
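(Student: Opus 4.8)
The plan is to run the two-step template of Theorems \ref{thm:BD-for-vanilla} and \ref{thm:BD-for-Gitik-Sharon}, adjusted to the fact that the models in the chain are now genuine forcing extensions $M_n[H_n]$ rather than inner models. First I would prove the easy inclusion $M_\omega[G] \subseteq \bigcap_n M_n[H_n]$ by definability, then establish $\sigma$-closure of $M_\omega[G]$, then prove the hard inclusion $\bigcap_n M_n[H_n] \subseteq M_\omega[G]$ by an approximation argument, and finally read off the two cardinal-arithmetic statements. For the easy inclusion, the tail of the iteration and the maps $j_{m,\omega}$ are definable inside $M_n$, so $M_\omega \subseteq M_n$ as a definable subclass. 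An induction on the defining clause for $H_{m+1}$ shows $H_m \in M_n[H_n]$ for every $m \geq n$: given $H_m \in M_n[H_n]$, the set $\{j_{m,m+1}(p)^\smallfrown\{\langle j_{m,m+1}(\alpha),\alpha\rangle \mid \alpha \in \dom p\}\mid p \in H_m\}$ is definable from $H_m$ and the definable class map $j_{m,m+1}$, and its upward closure in $(\mathbb{P}^*)^{M_{m+1}} \in M_{m+1}\subseteq M_n$ lies in $M_n[H_n]$. Since the pushforward construction only appends coordinates and preserves previously decided values, every value $G(\alpha,k)$ is witnessed by some $p \in H_m$ with $m\geq n$, so $G$ is computable from $\langle H_m \mid m \geq n\rangle$ and $G \in M_n[H_n]$; hence $M_\omega[G] \subseteq M_n[H_n]$, and the same bookkeeping gives $M_{m+1}[H_{m+1}]\subseteq M_m[H_m]$, so the chain is decreasing.

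For $\sigma$-closure I would argue exactly as in Lemma \ref{lemma:countable-closure-vanilla-Prikry}. Given an $\omega$-sequence $\langle \alpha_i \mid i<\omega\rangle$ of ordinals of $M_\omega[G]$, Merimovich's reduction (the Prikry property for the direct-extension order) together with Lemma \ref{lemma:representing-elements-EBPF-Momega} lets me write each $\alpha_i = j_\omega(F_i)(G\restriction a_i)$ for a function $F_i \in V$ and a finite set of coordinates $a_i$. Fixing $\langle\langle F_i,a_i\rangle \mid i<\omega\rangle \in V$ and applying $j_\omega$, the sequence $\langle j_\omega(F_i)\mid i<\omega\rangle$ lies in $M_\omega$; evaluating it against the single object $G \in M_\omega[G]$ returns $\langle \alpha_i \mid i<\omega\rangle \in M_\omega[G]$.

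The hard inclusion is where the loss of embeddings between the $M_n[H_n]$ bites, and it is the main obstacle. Let $X \in \bigcap_n M_n[H_n]$ be a set of ordinals. In the earlier proofs one sets $Y_n = \{\alpha\mid j_{n,\omega}(\alpha)\in X\}$ and $Z_n = j_{n,\omega}(Y_n)$, but here $Y_n \in M_n[H_n]$ lies outside $M_n$, so $j_{n,\omega}(Y_n)$ is not literally defined. The remedy is to pass to names: fix $\dot Y_n \in M_n$ with $Y_n = (\dot Y_n)_{H_n}$, push it forward to the $M_\omega$-name $j_{n,\omega}(\dot Y_n)$ for $j_{n,\omega}((\mathbb{P}^*)^{M_n})$, and interpret it by the $M_\omega$-generic filter $\bar G_n$ induced on that poset by $j_{n,\omega}[H_n]$. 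Setting $Z_n = (j_{n,\omega}(\dot Y_n))_{\bar G_n}$, elementarity of $j_{n,\omega}$ on $M_n$ together with the forcing theorem gives, for every $\beta = j_{n,\omega}(\alpha)$ in $\range j_{n,\omega}$, that $\beta\in Z_n \iff \alpha\in Y_n \iff \beta\in X$. The crux --- the step that genuinely requires the Merimovich-style analysis rather than a single elementary embedding --- is to check that $\bar G_n$ is $M_\omega$-generic (here one uses the $\kappa$-directedness of Lemma \ref{lemma:kappa-directness}) and that $\bar G_n \in M_\omega[G]$; this is precisely what the deterministic pushforward definition of the $H_m$ and of $G$ is designed to secure. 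Granting it, $Z_n \in M_\omega[G]$, so $\langle Z_n \mid n<\omega\rangle \in M_\omega[G]$ by $\sigma$-closure, and since every $\beta$ below a fixed bound lies in $\range j_{n,\omega}$ for all large $n$, we conclude $\beta \in X \iff \forall^* n\ \beta\in Z_n$, whence $X \in M_\omega[G]$.

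Finally the two extra assertions. For cofinality, $\sigma$-closure places the critical sequence $\langle j_n(\kappa)\mid n<\omega\rangle$ into $M_\omega[G]$; since every $\delta<j_\omega(\kappa)$ is $j_{m,\omega}(\bar\delta)$ for some $\bar\delta<j_m(\kappa)$ and is then fixed from stage $m+1$ on (as $\crit j_{m+1,\omega} = j_{m+1}(\kappa)>j_m(\kappa)$), we have $j_\omega(\kappa)=\sup_n j_n(\kappa)$, so this sequence witnesses $\cf^{M_\omega[G]} j_\omega(\kappa)=\omega$. For the power set, the inequality $2^{j_\omega(\kappa)}\geq j_\omega(\lambda)$ comes from reading off, for each coordinate $\alpha<j_\omega(\lambda)$, the distinct subset of $j_\omega(\kappa)$ coded by $\langle G(\alpha,k)\mid k<\omega\rangle$; the reverse inequality follows from Lemma \ref{lemma:width-of-iteration} and Remark \ref{remark:chain-condition-from-width}, which present $M_\omega[G]$ as a $j_\omega(\kappa)^+$-c.c.\ extension of $M_\omega$, so that a nice-name count in $M_\omega$ (using $\lambda^\kappa=\lambda$) bounds $2^{j_\omega(\kappa)}$ by $j_\omega(\lambda)$.
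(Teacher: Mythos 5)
Your architecture matches the paper's (easy inclusion by definability, $\sigma$-closure, then evaluation of the pushed-forward names $j_{n,\omega}(\dot Y_n)$), but the step you yourself identify as the crux is exactly where the argument is missing, and it cannot be waved through. You reduce the hard inclusion to the claim that the filter $\bar G_n$ generated by $j_{n,\omega}\image H_n$ is an element of $M_\omega[G]$, and justify it only by saying the pushforward construction ``is designed to secure'' this. It is not. The object $G$ is merely the matrix of column values $\langle G(\alpha,k)\rangle$; membership of a condition $q$ of size $j_\omega(\kappa)$ in $\bar G_n$ requires not just that each $q(\alpha)$ be an initial segment of $G(\alpha)$, but that the lengths $|q(\alpha)|$ be \emph{synchronized} across $\dom q$ so that $q$ lies above a single $j_{m,\omega}(p)$ with $p \in H_m$ --- and this synchronization data is precisely what is not transparently encoded in $G$. (Note also that the most natural way to see $\bar G_n \in M_\omega[G]$ is that $\bar G_n \in \bigcap_m M_m[H_m]$ and then to invoke the theorem being proved, which is circular.) The paper does not recover the full generic filter at all: for each $\kappa_\omega$-sized set of coordinates $s$ arising as the trace of an elementary submodel, it isolates an auxiliary tree Prikry forcing $\mathbb{Q}_s$ with measure $U(s)$ and proves (Lemma \ref{lemma:EBPF-realizing-names}, via Merimovich's Mathias-like criterion) that the associated ``Prikry sequence'' of conditions $p_n = p^\smallfrown\langle t_0,\dots,t_{n-1}\rangle$ compatible with $G$ is unique up to a finite shift, hence recoverable in $M_\omega[G]$, and that its tail lies in the pushforward of $H_n$; the set $X$ is then read off as $\{\alpha \mid$ for some such sequence, $p_n \Vdash \alpha \in \sigma_n$ for all large $n\}$. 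That lemma is the real mathematical content of the theorem, and your proposal replaces it with an unproved (and in the stated generality, dubious) assertion.

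Two smaller points. For the genericity of the pushed-forward filter over $M_\omega$, the relevant tool is not the $\kappa$-directedness of Lemma \ref{lemma:kappa-directness} but the width bound of Lemma \ref{lemma:width-of-iteration} combined with the $\kappa^+$-distributivity of $\mathbb{P}^*$; and your $\sigma$-closure sketch elides the actual work, which is to use an elementary submodel $N$ of size $\kappa$ and $\kappa$-directedness to replace the countably many generators $\gamma_0,\dots,\gamma_{n-1}$ of Lemma \ref{lemma:representing-elements-EBPF-Momega} by the images $\rho, j_1(\rho),\dots$ of a \emph{single} ordinal $\rho$, since only then do the evaluation points form a tail of the single column $G(j_\omega(\rho))$. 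Finally, your upper bound on $2^{j_\omega(\kappa)}$ via a $j_\omega(\kappa)^+$-c.c.\ presentation of $M_\omega[G]$ over $M_\omega$ conflicts with the paper's closing remark for this section, which points out that the chain-condition argument of Lemma \ref{lemma:chain-condition-Prikry} does not transfer to the extender-based setting; a direct counting of subsets of $j_\omega(\kappa)$ inside $V[H]$ (under the implicit arithmetic hypotheses on $\lambda$) is the safer route.
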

\begin{proof}
First, the verification that $\cf^{M_\omega[G]} j_\omega(\kappa) = \omega$ and $G$ forms a scale of length $j_\omega(\lambda)$ on $j_\omega(\kappa)$ is straight-forward. Let us focus in the proof of the intersection theorem.

As in Theorem \ref{thm:BD-for-vanilla}, we need to show first that the model is closed under countable sequences.
\begin{lemma}\label{lemma:EBPF-sigma-closed}
$M_\omega[G]$ is closed under $\omega$-sequences.
\end{lemma}
\begin{proof}
Since $M_\omega[G]$ is a model of $\ZFC$, it is enough to prove the claim for sequences of ordinals. 

The crux of the argument is \cite[Corollary 2.6]{Merimovich2007}.

Fix $\alpha$ an ordinal in $M_\omega[G]$. So, there is a function $f \colon \kappa^n \to \Ord$ and $\gamma_0,\dots, \gamma_{n-1}$ as in Lemma \ref{lemma:representing-elements-EBPF-Momega}, such that $j_{\omega}(f)(\gamma_0,\dots, \gamma_{n-1}) = \alpha$.

Work in $V$. Fix an elementary substructure $N \prec H(\chi)$ for some large $\chi$, such that $\kappa \subseteq N$, $|N| = \kappa$, $E, \alpha \in N$. Let us apply the $\kappa$-directness of $E$, in the sense of Lemma \ref{lemma:kappa-directness}, and obtain an ordinal $\rho$ such that for every $\beta \in N$ there is $h$ such that $j(h)(\rho) = \beta$. In particular, this is true for $\gamma_0$, so there is $h_0$ such that $j(h_0)(\rho) = \gamma_0$. As $N$ knows that the width of $j$ is $\kappa$, there is $a \subseteq \lambda$ in $N$ of cardinality $\kappa$ such that $\gamma_1 \in j(a)$. In particular, since $a \subseteq N$, there is $h_1$ such that $j_2(h_1)(j_1(\rho)) = \gamma_1$. Let us claim that we can "trace back" $h_1$ to a function in $V$. Indeed, in $N$ there is $\bar\rho < \lambda$ which is Rudin-Keisler above $a$. In particular, there is $\bar{h}_2 \in M_1^N$ such that $j_{1,2}(\bar h_2)(\bar \rho) = \gamma_2$. But, $\bar{h}_2 = j_1(\bar{\bar{h}}_2)(\bar\gamma_2)$ for some $\bar\gamma_2 \in N$ and thus we conclude that all those computations can be made using only $\rho$.  

Continue this way, we conclude that there is a function $h \colon \kappa^n \to \Ord$ such that $j_\omega(h)(\rho, j_1(\rho),j_2(\rho),\dots, j_{n-1}(\rho)) = \alpha$.

Now, let $\langle\alpha_n \mid n < \omega\rangle$ be a sequence of ordinals. Applying the above arguments with $N$ containing the sequence $\langle \alpha_n \mid n < \omega\rangle$ instead of just a single ordinal $\alpha$, we obtain $\rho$ and a sequence of functions $\langle h_n \mid n < \omega\rangle$ such that for all $n$, $j_n(h_n)(\rho,j_1(\rho),\dots, j_{m_n-1}(\rho)) = \alpha_n$. 

Since the sequence of images of $\rho$ is a final segment of $G(j_\omega(\rho))$, we conclude that $\langle \alpha_n \mid n < \omega\rangle \in M_\omega[G]$. 
\end{proof}

Let $X \in \bigcap M_n[H_n]$ be a set of ordinals. For each $n$, let us pick a $j_n(\mathbb{P}^*)$-name $\dot \tau_n$ for 
\[Y_n = \{\alpha \mid j_{n,\omega}(\alpha) \in X\}.\]
So, $\dot\tau_n^{H_n} = Y_n$. 

Let $\sigma_n = j_{n,\omega}(\tau_n) \in M_\omega$. Since $M_\omega[G]$ is closed under $\omega$-sequences, the sequence of names $\langle \sigma_n \mid n < \omega\rangle$ is a member of $M_\omega[G]$.

The problem is the we do not have access to the actual generics $H_n$ from which the names were realized. In order to overcome this, we will need to isolate a relevant version of the Prikry forcing that hides inside our forcing and show that its generic is unique (up to shifts). Here we must diverge from the thesis of the paper and work with forcing notion. As we would like to avoid cluttering this part of the proof with definitions, we refer the reader to \cite[Section 1.2]{GitikHandbook} for the definition of Prikry forcing on trees. 

Let $s \subseteq j_{\omega}(\lambda)$ be a set of size $\leq \kappa_\omega$ in $M_\omega$, such that $s$ is the intersection with $\lambda$ of an elementary submodel of $H^{M_\omega}(\chi)$. Let $U(s)$ be the corresponding measure: $A \in U(s) \iff \{\langle k(\alpha),\alpha\rangle\mid \alpha \in A\} \in k(A)$ for $k = j_{j_\omega(E)}$. 

Let $\mathbb{Q}_s$ be the tree Prikry forcing defined using the measure $U(s)$. Clearly, $U(s)$ is the image of some measure of the form $U(\bar s)$ on $M_n$ for some $n < \omega$.
 
\begin{lemma}\label{lemma:EBPF-realizing-names}
There is a sequence $\vec t = \langle t_n \mid n < \omega\rangle$ which is generic for $\mathbb{Q}_s$, and 
a condition $p \in j_{\omega}(\mathbb{P}^*)$, such that $p^\smallfrown \langle t_0, \dots, t_{n-1}\rangle$ are compatible with $G$ for all $n$.  

Moreover, the sequence $\vec t$ is unique, up to a finite shift.
\end{lemma}
\begin{proof}
This follows from the Merimovich's criteria, \cite{Merimovich2021}:

First, in order to get such a sequence pick any $p \in \bigcup j_{n,\omega}\image H_n$ such that $\dom p \supseteq s$. Let $p \in j_{n,\omega}(H_n)$. Take, $t_m$ to be the added coordinates in step $n + m$, namely for $\alpha \in \dom j_{n,m}(\bar{p})$ we let $t_m(j_{m,\omega}(\alpha)) = \alpha$.  

Let $p, p'$ be conditions in $j_\omega(\mathbb{P}^*)$ with domain $s$, such that there are sequences $\langle t_n \mid n < \omega\rangle$, $\langle t_n' \mid n < \omega\rangle$ are generic for $\mathbb{Q}_s$, and for all $\alpha \in s$, 
\[p(\alpha) ^\smallfrown \langle t_n(\alpha) \mid n < \omega\rangle =  
p'(\alpha) ^\smallfrown \langle t'_n(\alpha) \mid n < \omega\rangle =  G(\alpha).\]

Let us show that there is $k$ such that for all large $n$, $t_{k + n} = t'_{k'+n}$ and 
\[p^\smallfrown \langle t_0, \dots, t_{k-1}\rangle = {p'} ^\smallfrown \langle t_0', \dots, t'_{k'-1}\rangle.\]

Pick in $M_\omega$ a function $g$ enumerating $s$. For all large $n$, $g \image t_n(\kappa_\omega) = \dom t_n$. In particular, by comparing the values at $\kappa_\omega$, we obtain the possible value of $k' - k$. Moreover, for all large $n$, 
\[\forall \alpha \in \dom t_n,\, t_n(\alpha) > \max(p(\alpha), p'(\alpha)) \text{ and } t_n(\alpha) < t_{n+1}(\kappa_\omega).\]
This is easily obtained by taking the right $U(s)$-large tree. Therefore, for such $n$-s, the value of $t_n(\alpha)$ is the unique ordinal in $G(\alpha)$ which is between $G(\kappa)_n$ and $G(\kappa)_{n+1}$.
\end{proof}

Given a condition $p$ and a sequence $\vec t$ witnessing the validity of the lemma for $s$, let $p_n = p^\smallfrown \langle t_0, \dots, t_{n-1}\rangle$. We call $\langle p_n \mid n < \omega\rangle$ a Prikry sequence for $s$. The lemma indicates that the Prikry sequence is unique, up to an initial segment. We will assume always that $|p_n(\kappa_\omega)| = n$. This makes the Prikry sequences to agree up to an initial segment, without a shift.

\begin{lemma}
Let $\langle p_n \mid n < \omega\rangle$ be a Prikry sequence for some $s$. Then, for all large $n$, $p_n \in j_{n,\omega}(H_n)$.
\end{lemma}
\begin{proof}
Pick $n$ large enough so that $s = j_{n,\omega}(\bar s)$. Let $q \in H_n$ with domain $\bar{s}$. Then, the canonical sequence of generators, starting with $j_{n,\omega}(q)$ is a Prikry sequence for $s$. In particular, letting $\langle q_m \mid m < \omega\rangle$ be the corresponding sequence (adding dummy conditions in the beginning, if needed), we know that for all large $n$, $q_n = p_n$, as we set the shift using the $\kappa_\omega$ coordinate.

Since $q_n \in H_n$, once we go past the dummy coordinates, the conclusion follows.
\end{proof}
Let us claim that $X$ is the set of all $\alpha$ such that there is a condition $p$ and $\vec t$ compatible with $G$, and for all large $n$, 
\[p_n \Vdash \alpha \in \sigma_n\]
Indeed, if $\alpha \in X$, the existence of such $p$ is clear. Otherwise, since for all large $n$, $p_n$ comes from $H_n$, it cannot force contradictory information.
\end{proof}

It is interesting to try to see where the chain condition proof fails. Indeed, any ordinal can still be captured by a set of size $\kappa$. The problem is both the generic for $\mathbb{P}^*$ and the fact that there is no elementary embedding from $V[H]$ to $M_\omega[G]$. 
\subsection{Interleaved Collapses}\label{subsection:BD-for-IC}
In this section, we will describe a situation in which interleaved collapses can be incorporated into the BD setting. Let us discuss first the simple setting of the vanilla Prikry forcing using a normal measure $U$. Let $j \colon V \to M$ be the ultrapower map using the normal measure. 

The following lemma is well known.
\begin{lemma}\label{lemma:guiding generic}
If $2^\kappa = \kappa^+$, then there is an $M$-generic filter for $\Col(\kappa^+, <j(\kappa))$ in $V$. 
\end{lemma}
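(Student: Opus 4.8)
The plan is to build the $M$-generic filter for $\Col(\kappa^+, <j(\kappa))$ by a diagonal intersection argument, exploiting the hypothesis $2^\kappa = \kappa^+$ to enumerate, in $V$, all the dense sets of this poset that live in $M$. First I would observe that every element of $M$ is of the form $j(f)(\kappa)$ for some $f \colon \kappa \to V$ in $V$, and that by normality (or just by $2^\kappa = \kappa^+$ together with the closure $^\kappa M \subseteq M$) the poset $\Col(\kappa^+, <j(\kappa))$ as computed in $M$ has only $j(\kappa^+) = j(\kappa)$ many maximal antichains \emph{from $M$'s point of view}, but more importantly, from $V$'s point of view, the collection of dense subsets of this poset that belong to $M$ has size $|j(\kappa)|^V$. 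The key arithmetic input is that $2^\kappa = \kappa^+$ forces $|j(\kappa)|^V = \kappa^+$: indeed $j(\kappa) < (2^\kappa)^+ = \kappa^{++}$ since $j(\kappa)$ is represented by functions $\kappa \to \kappa$ modulo $U$, of which there are $2^\kappa = \kappa^+$ many, so $|j(\kappa)| \le \kappa^+$, and hence also $|{}^{<\kappa}j(\kappa) \cap M| \le \kappa^+$, giving that $\Col(\kappa^+,<j(\kappa))^M$ has at most $\kappa^+$ dense sets in $M$.

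Next I would enumerate these dense sets as $\langle D_\alpha \mid \alpha < \kappa^+ \rangle$ in $V$ and construct a decreasing chain of conditions $\langle p_\alpha \mid \alpha < \kappa^+\rangle$ meeting each $D_\alpha$. The single nontrivial point is to ensure the chain can be continued through limit stages of cofinality $\kappa^+$ — but there are no such limits below $\kappa^+$, so the only limits to worry about have cofinality $\le \kappa$. Here I would use the $<\kappa^+$-closure of the Levy collapse $\Col(\kappa^+,<j(\kappa))$, namely its $\le\kappa$-closure in $M$, to take lower bounds at limit stages of cofinality $\le \kappa$. Since ${}^\kappa M \subseteq M$, any descending $\kappa$-sequence of conditions that I build in $V$ actually lies in $M$, so $M$ can find a lower bound, and I can pick that lower bound (or any $V$-choice of one) to continue. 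This is exactly the place where the closure of $M$ is indispensable.

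The \emph{main obstacle} is the interaction between two facts: I am building the filter in $V$, so I may use all of $V$'s resources to enumerate the dense sets and to make choices, but the lower bounds at limits must exist in $M$, which is only $\kappa$-closed, not $\kappa^+$-closed. The construction succeeds precisely because the limit stages I encounter below $\kappa^+$ all have cofinality $\le \kappa$, matching the closure of $M$. Let $G = \{q \mid \exists \alpha,\ p_\alpha \le q\}$ be the upward closure of the chain. By construction $G$ is a filter meeting every dense $D \in M$, hence $M$-generic. Finally I would note that $G \in V$ since the whole construction — the enumeration, the chain, and the limits — was carried out inside $V$.
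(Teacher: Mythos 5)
Your proposal is essentially the paper's own argument: use $2^\kappa=\kappa^+$ to see that $|j(\kappa)|^V=\kappa^+$, enumerate in $V$ the relevant dense sets/antichains of $\Col(\kappa^+,<j(\kappa))^M$ in order type $\kappa^+$, and build a descending $\kappa^+$-chain meeting them, using the $\kappa^+$-closure of the collapse in $M$ together with ${}^\kappa M\subseteq M$ to find lower bounds at the limit stages (all of cofinality $\le\kappa$), then take the upward closure. The only quibble is your count of the dense sets: the bound $\kappa^+$ does not follow from the poset having $V$-size $\kappa^+$, but rather from $|\mathcal{P}^M(\Col(\kappa^+,<j(\kappa)))|^V=|j(\kappa^+)|^V\le|(\kappa^+)^\kappa|=\kappa^+$ (the paper avoids even this by meeting only the maximal antichains, of which there are $j(\kappa)$ many in $M$ by the chain condition and the inaccessibility of $j(\kappa)$ there).
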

\begin{proof}
Let us count the maximal antichains of the forcing $\Col(\kappa^+, <j(\kappa)$ in $M$. By the chain condition and since $j(\kappa)$ is inaccessible in $M$, there are $j(\kappa)$ such antichains in $M$. In $V$, $|j(\kappa)| = |\kappa^\kappa| = \kappa^+$. Let $\vec A = \langle A_\alpha \mid \alpha < \kappa^+\rangle$ be an enumeration of all maximal antichains in $M$, $\vec A \in V$.  

The forcing $\Col(\kappa^+, <j(\kappa))^M$ is $\kappa^+$-closed in $M$, and since $M$ is closed under $\kappa$-sequences, it is $\kappa^+$-closed in $V$ as well. Let us define in $V$ a decreasing sequence of conditions, $\langle p_\alpha \mid \alpha < \kappa^+\rangle$ with the property that $p_\alpha \leq q_\alpha \in A_\alpha$. This can be done, using the closure of the forcing (from the point of view of $V$) at limit steps. 

Let $K$ be the upwards closure of $\langle p_\alpha \mid \alpha < \kappa^+\rangle$. Then  $K$ is $M$-generic filter.
\end{proof}
While it seems like the argument relies on the chain condition of the forcing, a similar argument works for the forcing $\Col(\kappa^+, j(\kappa))$, as the number of dense open sets is still $\kappa^+$ from the point of view of $V$. It does not work if $2^\kappa > \kappa^+$. Yet, by carefully constructing the model, at some cases one can obtain an $M$-generic filter in those cases as well. 

From this point, we will only assume that there is an $M$-generic filter for the collapse $\Col(\kappa^+, <j(\kappa))$, but modifying this to other forcing notions that admit a guiding generic does not change the argument.

Let us consider the direct system of ultrapower embeddings, $j_{n, m} \colon M_n \to M_m$, $m \leq \omega$. Let $K_{n + 1} = j_n(K)$. 

\begin{lemma}
For each $n \leq m$, $K_{n + 1}$ is $M_m$ generic for the forcing $\Col(\kappa_n^+, <\kappa_{n+1})$.
\end{lemma}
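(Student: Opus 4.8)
The plan is to get the genericity of $K_{n+1}=j_n(K)$ over $M_{n+1}$ for free from elementarity, and then to push it up the chain to all later models $M_m$ using the fact that the iteration produces a \emph{decreasing} sequence of models. (The meaningful range is $n+1\le m\le\omega$: since $K_{n+1}=j_n(K)\in M_n$ and $M_m\subseteq M_n$ for $m\le n$, the filter $K_{n+1}$ lies in $M_m$ for every $m\le n$ and so cannot be generic there.)

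First I would apply $j_n=j_{0,n}$ to the hypothesis. By Lemma~\ref{lemma:guiding generic} (and the standing assumption that a guiding generic exists) the filter $K$ is $M_1$-generic for $\mathbb{Q}:=\Col(\kappa^+,<\kappa_1)$, where $M_1=\Ult(V,U)$ and $\kappa_1=j_{0,1}(\kappa)$. The assertion ``$K$ is a filter on $\mathbb{Q}$ meeting every dense $D\subseteq\mathbb{Q}$ with $D\in M_1$'' is first order over $V$ with parameters $U,K,\mathbb{Q}$ (membership in $\Ult(V,U)$ is definable from $U$, and the sets $D$ quantified over all have bounded rank). Hence elementarity of $j_n$ gives that $K_{n+1}$ is $j_n(M_1)$-generic for $j_n(\mathbb{Q})$. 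Now I would identify the images: $j_n(\Ult(V,U))=\Ult(M_n,j_n(U))=M_{n+1}$, and $j_n(\kappa^+)=(\kappa_n^+)^{M_n}$. The one nontrivial point is the identity
\[ j_n(\kappa_1)=j_{j_n(U)}(\kappa_n)=j_{n,n+1}(\kappa_n)=\kappa_{n+1}, \]
obtained by applying $j_n$ to ``$\kappa_1=j_U(\kappa)$'' and using that $j_{n,n+1}$ is exactly the ultrapower embedding of $M_n$ by $j_n(U)$, which sends $\kappa_n$ to $\kappa_{n+1}$. Thus $j_n(\mathbb{Q})=\Col(\kappa_n^+,<\kappa_{n+1})$ and $K_{n+1}$ is $M_{n+1}$-generic for it, settling $m=n+1$.

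Next I would verify that $\Col(\kappa_n^+,<\kappa_{n+1})$ is computed identically in $M_{n+1}$ and in each $M_m$ with $n+1\le m\le\omega$. Since $\crit j_{n+1,m}=\kappa_{n+1}$, the models $M_{n+1}$ and $M_m$ have the same $V_{\kappa_{n+1}}$; in particular they agree on $\mathcal{P}(\kappa_n)$, on the value $\kappa_n^+$, and on the set of conditions, each of which is a partial function of size $\le\kappa_n$ with domain and range below $\kappa_{n+1}$, hence an element of $V_{\kappa_{n+1}}$. So there is a single partial order $\mathbb{Q}_n:=\Col(\kappa_n^+,<\kappa_{n+1})$ common to all these models.

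Finally the propagation is purely formal. The iteration gives $M_{n+1}\supseteq M_{n+2}\supseteq\cdots\supseteq M_\omega$, so $M_m\subseteq M_{n+1}$ for every $m$ with $n+1\le m\le\omega$; as $\mathbb{Q}_n$ is the same poset in all of them, any dense $D\subseteq\mathbb{Q}_n$ with $D\in M_m$ already lies in $M_{n+1}$, and $K_{n+1}$ meets it by its $M_{n+1}$-genericity. Hence $K_{n+1}$ is $M_m$-generic, as required. I expect the main obstacle to be not the propagation step, which is automatic, but the bookkeeping in the first step: pinning down $j_n(\kappa_1)=\kappa_{n+1}$ and that the target of $j_n$ is precisely $M_{n+1}$, together with the absoluteness of $\mathbb{Q}_n$ across the chain, which is what licenses ``genericity over the larger model implies genericity over the smaller'' for literally the same forcing.
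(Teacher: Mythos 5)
Your proof is correct and follows essentially the same route as the paper's: elementarity of $j_n$ gives genericity over $M_{n+1}$, and since $M_m\subseteq M_{n+1}$ while $\Col(\kappa_n^+,<\kappa_{n+1})$ is computed identically in all these models (they share $V_{\kappa_{n+1}}$), every dense set in $M_m$ is already met. You are also right to flag the indexing: the meaningful range is $n+1\le m\le\omega$, since $K_{n+1}=j_n(K)\in M_n$ cannot be generic over $M_m$ for $m\le n$; the paper's proof likewise contains this off-by-one (it asserts ``$M_n$-generic'' where $M_{n+1}$ is meant).
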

\begin{proof}
First, by elementarity, $K_{n + 1}$ is $M_n$-generic. Moreover, since $M_m \subseteq M_n$ and 
\[\Col^{M_n}(\kappa_n^+, <\kappa_{n+1})=\Col^{M_m}(\kappa_n^+, <\kappa_{n+1}),\]
we conclude that $K_{n+1}$ is $M_m$ generic as well. 
\end{proof}
\begin{lemma}
Let $K_0$ be $V$-generic for $\Col(\omega_1, <\kappa)$. 

For each $n$, $K_0 \times K_1 \times \cdots \times K_n$ is $M_n$-generic for $\prod_{i \leq n} \Col(\kappa_{i-1}^+, <\kappa_i)$. 
\end{lemma}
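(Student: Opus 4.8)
The plan is to prove the statement by a downward induction on $k\le n$, showing at each stage that the tail product $K_k\times\cdots\times K_n$ is $M_n$-generic for $\prod_{k\le i\le n}\Col(\kappa_{i-1}^+,<\kappa_i)$, and then reading off the case $k=0$. Write $Q_i=\Col(\kappa_{i-1}^+,<\kappa_i)$, with the convention $\kappa_{-1}=\omega$, so that $Q_0=\Col(\omega_1,<\kappa)$. Two preliminary observations feed the induction. First, each $K_i$ is already $M_n$-generic for $Q_i$: for $1\le i\le n$ this is the content of the preceding lemma, which yields $M_m$-genericity for every $m\ge i-1$ and in particular for $m=n$; while for $i=0$ the poset $Q_0$ lies entirely below $\crit j_{0,n}=\kappa$, so it is computed identically in $V$ and in $M_n$ (as $V_\kappa=V_\kappa^{M_n}$), and $K_0$, being $V$-generic, is a fortiori $M_n$-generic. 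Second, in $M_n$ the cardinal $\kappa_i$ is inaccessible: since $M_n\subseteq M_i$ and the two models agree below $\kappa_i=\crit j_{i,n}$, while $\kappa_i$ is measurable, hence inaccessible, in $M_i$, both strong limitness (determined by $V_{\kappa_i}$) and regularity (downward from $M_i$) pass to $M_n$. Consequently $|Q_i|^{M_n}=\kappa_i$, and $Q_i$ is $\kappa_{i-1}^+$-closed in $M_n$, the closure passing down from $M_i$ to $M_n$ because lower bounds in a Levy collapse are just unions of conditions, computed identically in both models.

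For the induction, suppose $K_{k+1}\times\cdots\times K_n$ is $M_n$-generic for $P_{[k+1,n]}:=\prod_{k<i\le n}Q_i$. Since each factor $Q_i$ with $i>k$ is $\kappa_{i-1}^+$-closed and $\kappa_{i-1}^+\ge\kappa_k^+$, the finite product $P_{[k+1,n]}$ is $\kappa_k^+$-closed in $M_n$; meanwhile $|Q_k|^{M_n}=\kappa_k<\kappa_k^+$. I would then invoke the standard product-genericity principle: if, over a model $M$, $G$ is generic for a forcing $P$ with $|P|^M<\lambda$ and $H$ is generic for a $\lambda$-closed forcing $Q$, then $G\times H$ is $M$-generic for $P\times Q$. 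Applying this with $M=M_n$, $P=Q_k$, $Q=P_{[k+1,n]}$, $\lambda=\kappa_k^+$, $G=K_k$ and $H=K_{k+1}\times\cdots\times K_n$ gives that $K_k\times\cdots\times K_n$ is $M_n$-generic for $Q_k\times P_{[k+1,n]}=\prod_{k\le i\le n}Q_i$, completing the step. The base case $k=n$ is exactly the first observation, and the case $k=0$ is the assertion of the lemma.

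The one non-bookkeeping ingredient is the product-genericity principle itself, which is where I expect the real work to sit. It reduces, via the isomorphism $P\times Q\cong P*\check Q$, to showing that $H$ remains generic over $M_n[K_k]$; and that in turn follows once one checks that every dense open $D\subseteq Q$ lying in $M_n[K_k]$ contains a dense open subset already in $M_n$. The latter is the familiar fusion argument: given a name $\dot D$ forced dense open by some $p_0\in K_k$, and fixing $q\in Q$, one enumerates the fewer than $\lambda$ many conditions of $P$ below $p_0$ and descends through $Q$ (legitimately, by $\lambda$-closure together with $|P|<\lambda$) to a single $q^*\le q$ that is forced into $\dot D$ below every condition extending $p_0$; since $D$ is open this yields $p_0\Vdash\check q^*\in\dot D$, and the set of such $q^*$ is the desired ground-model dense open subset of $D$. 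I would either reproduce this argument briefly or cite it as standard. Everything else is the verification of sizes and closure in $M_n$ sketched above, all of which is routine once one knows that each $\kappa_i$ is inaccessible there.
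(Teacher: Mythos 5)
Your argument is correct and is essentially the paper's own proof: the paper simply invokes Easton's Lemma (a $\lambda$-c.c.\ forcing and a $\lambda$-closed forcing are mutually generic), and your ``product-genericity principle'' for $|P|<\lambda$ together with the fusion argument is exactly that lemma in the special case needed here. The downward induction and the verifications of inaccessibility, size, and closure in $M_n$ are the routine bookkeeping the paper leaves implicit.
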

\begin{proof}
Using Easton Lemma: if $H$ is generic for a $\lambda$-c.c.\ forcing and $G$ is generic for a $\lambda$-closed forcing then $H$ and $G$ are mutually generic.
\end{proof}
Let $\vec K = \langle K_n \mid n < \omega\rangle$.
\begin{theorem}\label{thm:BD-for-interleaved-collapses}
$M_\omega[\vec K] = \bigcap M_n[\vec K \restriction n + 1]$.
\end{theorem}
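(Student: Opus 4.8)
The plan is to follow the same two-step template that worked for Theorems \ref{thm:BD-for-vanilla} and \ref{thm:BD-for-Gitik-Sharon}, but now in the more delicate setting where the models in the chain are generic extensions $M_n[\vec K \restriction n+1]$ rather than pure iterates. The easy inclusion $M_\omega[\vec K] \subseteq \bigcap_n M_n[\vec K \restriction n+1]$ should come from the fact that for each fixed $n$, the tail of the iteration $\langle M_m, j_{m,\ell} \mid n \leq m \leq \ell \leq \omega\rangle$, the direct limit $M_\omega$, and the generic objects $K_{n+1}, K_{n+2}, \dots$ are all definable inside $M_n$ from parameters. Since $K_{i+1} = j_i(K) \in M_n$ for all $i \geq n$ and the earlier collapses $K_0, \dots, K_n$ are assumed available in $M_n[\vec K \restriction n+1]$, we get $M_\omega \cup \{\vec K\} \subseteq M_n[\vec K \restriction n+1]$, hence $M_\omega[\vec K] \subseteq M_n[\vec K \restriction n+1]$ for each $n$.

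\textbf{Countable closure.} The crux of the first step is to show $M_\omega[\vec K]$ is closed under $\omega$-sequences of ordinals. Here I would combine two ingredients. First, the iterated-ultrapower representation (Claim in Section \ref{section:BD-theorem}) lets me capture a sequence of ordinals $\langle \alpha_n \mid n < \omega\rangle$ coming from the ultrapower side by a single function $j_\omega(\vec f)$ evaluated along the Prikry sequence $P = \langle \kappa_n \mid n < \omega\rangle$, which is itself recoverable from $\vec K$ (the collapse $K_{n+1}$ lives on the interval $(\kappa_n^+, \kappa_{n+1})$, so its support determines $\kappa_n$ and $\kappa_{n+1}$). Second, for the collapse part I would use that each $K_{n+1}$ is a single element already present in $M_\omega[\vec K]$ and that the product $\prod_{i} \Col(\kappa_{i-1}^+, <\kappa_i)$ factors so that any countable amount of information about the $K_i$'s is absorbed below some $\kappa_m$. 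Thus a countable sequence of ordinals in $M_\omega[\vec K]$ is coded by finitely-supported data at each coordinate together with the sequence $\vec K \restriction$ each level, all assembled by a single function whose $j_\omega$-image lies in $M_\omega$.

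\textbf{The intersection step.} For the hard inclusion, take a set of ordinals $X \in \bigcap_n M_n[\vec K \restriction n+1]$ and, as in Lemma \ref{lemma:intersection-in-vanilla-Prikry}, set $Y_n = \{\alpha \mid j_{n,\omega}(\alpha) \in X\}$ and $Z_n = j_{n,\omega}(Y_n)$. The essential point is that $Y_n$ must be computed \emph{inside} $M_n$ from $X$, but $X$ is only known in $M_n[\vec K \restriction n+1]$, not in $M_n$ itself. This is exactly the new difficulty that the paper flags at the start of Section \ref{section:EBPF}: we have lost the luxury of $X$ being captured by pure elementary embeddings. The remedy is to pick in each $M_n[\vec K \restriction n+1]$ a $\left(\prod_{i \leq n}\Col(\kappa_{i-1}^+,<\kappa_i)\right)$-name $\dot\tau_n$ for $Y_n$, form $\sigma_n = j_{n,\omega}(\dot\tau_n)$, and use the countable closure just proved to conclude $\langle \sigma_n \mid n < \omega \rangle \in M_\omega[\vec K]$. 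Then, in $M_\omega[\vec K]$, one reconstructs $X$ by the rule $\beta \in X \iff \forall^* n\ \beta \in Z_n$, where $Z_n$ is obtained by realizing the name $\sigma_n$ against the relevant truncation of $\vec K$. I expect the \textbf{main obstacle} to be justifying that this realization is legitimate: one must argue that $M_\omega[\vec K]$ has enough access to the mutually-generic filters $\vec K$ (guaranteed by the Easton-Lemma product structure) to evaluate each $\sigma_n$ correctly and recover $Z_n = j_{n,\omega}(Y_n)$, and that for $\beta \in \range j_{n,\omega}$ the equivalence $\beta \in X \iff \beta \in Z_n$ genuinely holds for all large $n$. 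This mirrors the name-realization maneuver in Lemma \ref{lemma:EBPF-realizing-names}, and the fact that the collapses here are mutually generic (rather than entangled as in the Extender Based forcing) should make the uniqueness/recovery of the generic data considerably cleaner than in Theorem \ref{thm:BD-for-EBPF}.
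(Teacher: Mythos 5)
Your overall architecture coincides with the paper's: the easy inclusion via definability of the tail of the iteration and of $K_{i+1}=j_i(K)$ inside each $M_n$; countable closure of $M_\omega[\vec K]$; names $\dot\tau_n$ for $Y_n=\{\alpha\mid j_{n,\omega}(\alpha)\in X\}$ pushed forward to $\sigma_n=j_{n,\omega}(\dot\tau_n)$; and recovery of $X$ by a ``for all large $n$'' rule. One minor remark on the closure step: the paper's argument is shorter than what you sketch. Since $P$ is recoverable from $\vec K$, we have $M_\omega[P]\subseteq M_\omega[\vec K]$, and $M_\omega[P]$ is closed under $\omega$-sequences of ordinals from $V$ by Lemma \ref{lemma:countable-closure-vanilla-Prikry}; since $\Col(\omega_1,<\kappa)$ is countably closed, $V$ and $V[K_0]$ have the same $\omega$-sequences of ordinals, and that is all that is needed. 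Your additional bookkeeping about absorbing collapse information below some $\kappa_m$ plays no role for sequences of ordinals.

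The genuine gap is exactly at the step you flag as the main obstacle, and the resolution is not the one you point to. The filter generated by $\vec K\restriction n+1$ inside $\mathbb{Q}_n^j=\bigl(\prod_{i<n}\Col(\kappa_{i-1}^+,<\kappa_i)\bigr)\times\Col(\kappa_{n-1}^+,<\kappa_\omega)$ is \emph{not} $M_\omega$-generic for that forcing --- the last coordinate $K_n$ is generic only for the subforcing $\Col(\kappa_{n-1}^+,<\kappa_n)$ --- so ``realizing $\sigma_n$ against the relevant truncation of $\vec K$'' is not a legitimate evaluation and does not by itself yield $Z_n=j_{n,\omega}(Y_n)$. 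Nor is an analogue of Lemma \ref{lemma:EBPF-realizing-names} what is needed: that lemma recovers a unique Prikry sequence hidden inside $G$, a phenomenon with no counterpart in the pure-collapse setting. The missing idea is that every condition $p$ in $\prod_{i\le n}\Col(\kappa_{i-1}^+,<\kappa_i)$ has cardinality at most $\kappa_{n-1}$ and lies in $V_{\kappa_n}$, hence $j_{n,\omega}(p)=p$. Consequently $p\Vdash\bar\alpha\in\dot\tau_n$ over $M_n$ transfers by elementarity to $p\Vdash_{\mathbb{Q}_n^j}\alpha\in\sigma_n$ over $M_\omega$ for $\alpha=j_{n,\omega}(\bar\alpha)$, and conversely. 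This yields the criterion: $\alpha\in X$ if and only if for all large $n$ there is $p\in\vec K\restriction n+1$ with $p\Vdash_{\mathbb{Q}_n^j}\alpha\in\sigma_n$, which is computable in $M_\omega[\vec K]$ from $\langle\sigma_n\mid n<\omega\rangle$ and $\vec K$. Without this fixed-point observation your final step does not go through as written.
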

\begin{proof}
As in the proof of Theorem \ref{thm:BD-for-vanilla}, we need to show first closure under $\omega$-sequences of ordinals and then to use a similar (but simpler) argument as in Theorem \ref{thm:BD-for-EBPF} in order to conclude the full theorem.

Let us begin with the closure under $\omega$-sequences.
\begin{lemma}
$M_\omega[\vec K]$ is closed under $\omega$-sequence of ordinals from $V[K_0]$. 
\end{lemma}
\begin{proof}
Indeed, it is easy to extract $P = \langle \kappa_n \mid n < \omega\rangle$ from $\vec K$. Therefore, $M_\omega[\vec K] \supseteq M_\omega[P]$ which is closed under countable sequences of ordinals from $V$, by Lemma \ref{lemma:countable-closure-vanilla-Prikry}. But $V$ and $V[K_0]$ have the same countable sequences, so the conclusion holds.  
\end{proof}
Let $X \in \bigcap M_n[\vec K \restriction n + 1]$ be a set of ordinals. Let us define $Y_n$ and the corresponding name $\dot\tau_n$ as before:
\[Y_n = \{\alpha\mid j_{n,\omega}(\alpha) \in X\}, \quad \dot\tau_n^{\vec K \restriction n + 1} = Y_n.\]
Let us look at $j_{n,\omega}(\dot\tau_n)$, this is a name with respect to the forcing \[\mathbb{Q}_n^j = \left(\prod_{i < n} \Col(\kappa_{i-1}^+, \kappa_i)\right) \times \Col(\kappa_{n-1}^+, \kappa_{\omega}).\] 

\begin{claim}\label{claim:interleaved-collapses-realizing-names}
$\alpha \in X$ if and only if for all large $n$, there is a condition $p \in \vec K \restriction n + 1$ such that (viewing $p$ as a condition in $\mathbb{Q}_n^j$), $p \Vdash_{\mathbb{Q}^j_n} \alpha \in j_{n,\omega}(\dot\tau_n)$.
\end{claim}
\begin{proof}
Indeed, if $\exists \bar\alpha, j_{n,\omega}(\bar \alpha) = \alpha$, then $\bar\alpha \in Y_n$ and there is some $p \in \vec K \restriction n + 1$ that forces that, namely $\bar\alpha\in \dot\tau_n$. Since $j_{n,\omega}(p) = p$, we conclude that $p \Vdash_{\mathbb{Q}^j_n} \alpha \in j_{n,\omega}(\dot\tau_n)$. 

On the other hand, if there is $p \in \vec K \restriction n + 1$ that forces $\alpha \in j_{n,\omega}(\dot\tau_n)$ and $\alpha = j_{n,\omega}(\bar\alpha)$ then again $p = j_{n,\omega}(p)$ and therefore $p \Vdash \bar\alpha \in \dot\tau_n$. 
\end{proof} 

In particular, since $\langle \dot\tau_n \mid n < \omega\rangle \in M_\omega[P]$, we conclude that $X \in M_\omega[\vec K]$.
\end{proof}

From Theorem \ref{thm:BD-for-interleaved-collapses}, it is easy to deduce which cardinals are preserved in $M_\omega[\vec K]$ and even show using Bukovsky's Theorem $M_\omega[\vec K]$ is a generic extension of $M_\omega$ using a $\kappa_\omega^+$-c.c.\ forcing notion. 
\subsection{Extender Based Prikry forcing with interleaved collapses}
Let us combine the results of Subsections \ref{subsection:BD-for-EBPF} and \ref{subsection:BD-for-IC}.

The following well known lemma, shows that it is possible to obtain a guiding generic filter for extender ultrapower. 
\begin{lemma}
Let $E$ be a $(\kappa, \kappa^{++})$-extender and let us assume that $2^\kappa = \kappa^+$. Let $M$ be the extender ultrapower by $E$. 

Then, there is an $M$-generic filter, $K$ for $\Col(\kappa^{+++}, <j(\kappa))$. 
\end{lemma}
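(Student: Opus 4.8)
The plan is to follow the proof of Lemma~\ref{lemma:guiding generic} as closely as possible: write $R = \Col(\kappa^{+++}, <j(\kappa))^M$, count how many maximal antichains of $R$ live in $M$, check that from the viewpoint of $V$ there are few of them, and then meet them one at a time along a descending sequence of conditions whose upward closure is the desired filter $K$. The extender changes both the count and the closure relative to the normal-measure case, and it is the closure that carries the real difficulty.

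For the count, $j(\kappa)$ is inaccessible in $M$, so $R$ has the $j(\kappa)$-c.c.\ there and $|R|^M = j(\kappa)$; hence there are $j(\kappa)^{<j(\kappa)} = j(\kappa)$ maximal antichains in $M$. The new input is $|j(\kappa)|^V$. By the generator/width property of the $(\kappa,\kappa^{++})$-extender, every ordinal below $j(\kappa)$ has the form $j(f)(\gamma)$ with $f \colon \kappa \to \kappa$ in $V$ and $\gamma < \kappa^{++}$; since $2^\kappa = \kappa^+$ there are $\kappa^+$ such $f$ and $\kappa^{++}$ such $\gamma$, so $|j(\kappa)|^V = \kappa^{++}$. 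Fix in $V$ an enumeration $\langle A_\alpha \mid \alpha < \kappa^{++}\rangle$ of the maximal antichains of $R$.

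For the closure, $R$ is $\kappa^{+++}$-closed in $M$. Because $M$ is closed under $\kappa$-sequences, every descending sequence of length $\leq \kappa$ taken in $V$ already lies in $M$, and so $R$ is $\kappa^+$-closed in $V$. One then builds $\langle p_\alpha \mid \alpha < \kappa^{++}\rangle$ descending, with $p_{\alpha+1}$ below some element of $A_\alpha$: successor steps are immediate, and at a limit $\delta$ with $\cf \delta \leq \kappa$ a cofinal $\kappa$-subsequence lies in $M$, so its union --- computed in $M$ --- is the required lower bound.

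The main obstacle, and the whole point of the lemma, is a limit $\delta < \kappa^{++}$ of cofinality $\kappa^+$. There $\langle p_\beta \mid \beta < \delta\rangle$ has length $\kappa^+$ and need not belong to $M$, and the only candidate lower bound $\bigcup_{\beta<\delta} p_\beta$ can fail to lie in $M$, since $M$ is not closed under $\kappa^+$-sequences; as any genuine lower bound must extend this union, if the union escapes $M$ there is no lower bound in $M$ at all. This gap is absent from the normal-measure argument, where the whole recursion had length $\kappa^+$ and $\kappa^+$-closure in $V$ sufficed. The role of the large collapsing degree $\kappa^{+++}$ --- rather than merely $\kappa^+$ --- is precisely to repair this: heuristically, a $\kappa^{+++}$-closed forcing has no nontrivial complete subforcing of size $\leq\kappa^+$, so meeting the first $\kappa^+$ antichains pins down no real generic information and leaves room to steer the construction. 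The delicate heart of the proof is therefore a bookkeeping argument that uses the $\kappa$-closure of $M$ and the $\kappa^{+++}$-closure of $R$ in $M$ in tandem to arrange that $\bigcup_{\beta<\delta} p_\beta \in M$ at every $\delta$ of cofinality $\kappa^+$. Granting this, the recursion reaches length $\kappa^{++}$, and the upward closure $K$ meets every $A_\alpha$ and is $M$-generic. I expect this cofinality-$\kappa^+$ step to be the only substantive work, the counting and the cofinality-$\leq\kappa$ limits being routine.
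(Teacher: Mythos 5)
Your counting is right ($|j(\kappa)|^V=\kappa^{++}$, since every ordinal below $j(\kappa)$ is $j(f)(\gamma)$ with $f\colon\kappa\to\kappa$ and $\gamma<\kappa^{++}$, so there are $\kappa^{++}$ maximal antichains to meet), and you have correctly located the obstruction: at a limit $\delta<\kappa^{++}$ of cofinality $\kappa^{+}$ the descending sequence built so far need not lie in $M$, and $M$ is not closed under $\kappa^{+}$-sequences, so a lower bound in $M$ need not exist. But that is exactly where your proof stops. The sentence ``granting this, the recursion reaches length $\kappa^{++}$'' concedes the entire content of the lemma, and the heuristic about $\kappa^{+++}$-closed forcings having no small complete subforcings is not an argument for the existence of the required lower bounds. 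As written this is a genuine gap, and the naive length-$\kappa^{++}$ recursion is precisely the step the standard proof is designed to avoid rather than repair.

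The missing idea, which is how the paper proceeds, is to factor $j$ through the normal ultrapower: let $U$ be the normal measure derived from $E$, let $i\colon V\to N=\Ult(V,U)$, and let $k\colon N\to M$ be the factor map $k([f]_U)=j(f)(\kappa)$. Lemma \ref{lemma:guiding generic} (which is your recursion, but of length only $\kappa^{+}$, where $\kappa^{+}$-closure in $V$ suffices because $|i(\kappa)|^V=2^{\kappa}=\kappa^{+}$) yields an $N$-generic filter $K_0$ for $\Col((\kappa^{+++})^N,<i(\kappa))$, and one takes $K$ to be the upward closure of $k\image K_0$. Genericity of $K$ combines the two facts your proposal never puts together: every $x\in M$, in particular every dense open $D$ in $M$, lies in $\range k(g)$ for some $g\colon(\kappa^{++})^N\to N$ in $N$; and since the collapse is $(\kappa^{+++})^N$-distributive in $N$, the intersection $D'$ of the dense open sets in $\range g$ is still dense open in $N$, so $K_0$ meets $D'$ and hence $K$ meets $k(D')\subseteq D$. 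This is the true role of collapsing from $\kappa^{+++}$ rather than $\kappa^{+}$: the closure must exceed the number $\kappa^{++}$ of generators so that a single dense set of $N$ can absorb an entire $k$-fiber of dense sets of $M$.
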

\begin{proof}
Let $U$ be the derived normal ultrafilter, and let $i \colon V \to N$ be the normal ultrapower. By Lemma \ref{lemma:guiding generic}, there is an $N$-generic filter $K_0$ for the forcing $\Col((\kappa^{+++})^N, < i(\kappa))$. 

Let $k \colon N \to M$ be the map given by $k([f]_U) = j(f)(\kappa)$. Let $K$ be the upwards closure of $k \image K_0$. 
\begin{claim}
$K$ is $M$-generic filter.
\end{claim}
\begin{proof}
For every $x \in M$, there is $g \colon (\kappa^{++})^N \to N$ such that $x \in \range k(g)$. This is clear, by noting that if $x = j(r)(a)$ for some generator $a$, then without loss of generality, $a \in \kappa^{++}$ and therefore $x \in \range k(i(r) \restriction (\kappa^{++})^N)$.

Given a dense open set $D \in M$, let $g \colon (\kappa^{++})^N \to N$ cover it, and let $D' = \bigcap \{g(\alpha) \mid g(\alpha) \text{ dense open}\}$. By the distributivity of the forcing in $N$, $D'$ is dense open and therefore there is a condition in $K_0$ meeting it.
\end{proof}
\end{proof}

Let us consider now the iteration $\langle j_{n,m} \colon M_n\to M_m\mid n \leq m \leq \omega\rangle$ given by iterating the extender embedding. Let $K_0$ be a $V$-generic for $\Col(\omega_1, <\kappa)$, and let $H_0\subseteq \mathbb{P}^*$ be $V$-generic. 

Let us define $H_n$ and $G$ in the very same way as in Theorem \ref{thm:BD-for-EBPF} and $K_n$ in the same way as in Theorem \ref{thm:BD-for-interleaved-collapses}. 

\begin{theorem}
$\bigcap M_n[\vec K \restriction n + 1][H_n] = M_\omega[\vec K][G]$.
\end{theorem}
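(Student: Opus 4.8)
The proof combines the two ingredients we have already developed: the Extender Based Prikry machinery of Theorem \ref{thm:BD-for-EBPF} and the interleaved-collapse argument of Theorem \ref{thm:BD-for-interleaved-collapses}. As in both earlier cases, the proof splits into two halves. First I would establish that $M_\omega[\vec K][G]$ is closed under $\omega$-sequences of ordinals (with respect to $V[K_0]$, hence with respect to $V$ since $\Col(\omega_1,<\kappa)$ is $\omega$-distributive). Then I would show that every set of ordinals lying in $\bigcap_n M_n[\vec K\restriction n+1][H_n]$ is already in $M_\omega[\vec K][G]$; the reverse inclusion $M_\omega[\vec K][G]\subseteq\bigcap_n M_n[\vec K\restriction n+1][H_n]$ is routine, since each $M_\omega$, $\vec K$, and $G$ is visible inside every $M_n[\vec K\restriction n+1][H_n]$ (the maps $j_{n,\omega}$ and the objects defining $G$ are definable there, and $K_n$, $H_n$ are present).

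For the closure step, I would first observe that $G$ extracts the diagonal generators exactly as in Lemma \ref{lemma:EBPF-sigma-closed}, so $M_\omega[G]$ is already $\omega$-closed; adjoining $\vec K$ only adds the collapse generics, and the sequence $P=\langle\kappa_n\mid n<\omega\rangle$ together with $\vec K$ is recoverable, so the closure argument of Lemma \ref{lemma:EBPF-sigma-closed} combined with the trivial extraction of $P$ from $\vec K$ in Subsection \ref{subsection:BD-for-IC} goes through unchanged. The key point is that a single elementary substructure $N\prec H(\chi)$ of size $\kappa$ can absorb both an arbitrary countable sequence of ordinals and enough of the collapse structure, yielding a common generator $\rho$ whose images form a tail of $G(j_\omega(\rho))$.

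For the intersection step, I would fix $X\in\bigcap_n M_n[\vec K\restriction n+1][H_n]$ and set $Y_n=\{\alpha\mid j_{n,\omega}(\alpha)\in X\}$. Each $Y_n$ has a $j_n(\mathbb{P}^*)\times(\text{collapse})$-name; pushing these names forward by $j_{n,\omega}$ and using $\omega$-closure, the sequence of images $\langle\sigma_n\mid n<\omega\rangle$ lands in $M_\omega[\vec K][G]$. I would then combine the two realization criteria: the unique-up-to-shift Prikry sequence $\langle p_n\mid n<\omega\rangle$ for a suitable $s$ of size $\kappa_\omega$ (Lemma \ref{lemma:EBPF-realizing-names} and the lemma following it, giving $p_n\in j_{n,\omega}(H_n)$ for all large $n$) with the observation from Claim \ref{claim:interleaved-collapses-realizing-names} that the collapse part is fixed pointwise by $j_{n,\omega}$, so that conditions in $\vec K\restriction n+1$ force the same statements about $\sigma_n$ upstairs as they do downstairs. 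Thus $\alpha\in X$ iff for all large $n$ there are $p_n$ (from the EBPF part, in $j_{n,\omega}(H_n)$) and $q\in\vec K\restriction n+1$ jointly forcing $\alpha\in\sigma_n$, a condition evaluable entirely inside $M_\omega[\vec K][G]$.

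The main obstacle is bookkeeping the interaction of the two forcings at the level of the name realization: one must verify that the product of the EBPF conditions and the collapse conditions retains the Prikry-style uniqueness, i.e.\ that the collapse coordinates do not interfere with the identification of the unique Prikry sequence attached to $s$. Here I would lean on Easton's Lemma (as in Subsection \ref{subsection:BD-for-IC}) to keep the collapse generics mutually generic with the tree-Prikry part, so that the forcing splits as a product and the uniqueness argument of Lemma \ref{lemma:EBPF-realizing-names} applies to the EBPF factor while the collapse factor is simply carried along via $\vec K$. Once this mutual genericity is in place, the two realization arguments compose and the theorem follows exactly as the combination of its two predecessors.
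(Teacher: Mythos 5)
Your proposal is correct and follows essentially the same route as the paper, which itself only sketches the argument: establish $\omega$-closure of $M_\omega[\vec K][G]$ via Lemma \ref{lemma:EBPF-sigma-closed}, form the pushed-forward names $\langle \sigma_n \mid n<\omega\rangle$, recover the EBPF conditions through the unique Prikry sequences of Lemma \ref{lemma:EBPF-realizing-names}, and handle the collapse coordinates exactly as in Claim \ref{claim:interleaved-collapses-realizing-names}. Your additional attention to the mutual genericity of the two factors (via Easton's Lemma) is a reasonable elaboration of a point the paper leaves implicit.
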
 
\begin{proof}
As the ideas of the proof are very similar to the proofs of Theorem \ref{subsection:BD-for-EBPF} and Theorem \ref{thm:BD-for-interleaved-collapses} let us sketch the proof.

First, we show that the model $M_\omega[\vec K][G]$ is closed under $\omega$-sequences, using Lemma \ref{lemma:EBPF-sigma-closed}. Then, we obtain a sequence of names $\langle \sigma_n \mid n < \omega\rangle$ as above. Using Lemma \ref{lemma:EBPF-realizing-names}, we obtain local approximations of the filters $j_{n,\omega}\image H_n$ and by the same argument of Claim \ref{claim:interleaved-collapses-realizing-names} we obtain the relevant conditions for the $K$-parts. 
\end{proof}
\section{Magidor and Radin forcing}\label{section:Magidor-Radin}
In \cite{Magidor78}, Magidor introduces a variant of the Prikry forcing that enables one to change the cofinality of a measurable cardinal to be uncountable. This forcing was revised by Mitchell and generalized by Radin. The version that we present here follows Mitchell's definition.\footnote{A variant of this presentation appeared in the unpublished book of Cummings and Woodin.}  As in the other parts of the paper, we are not going to define a forcing notion but rather an extension of an iterated ultrapower.

\begin{definition}
Let $U, U'$ be two normal measures on $\kappa$. Then $U$ is below $U'$ in the Mitchell order if $U \in \Ult(V, U')$.
\end{definition}

\begin{definition}[Mitchell]
Let $o^\mathcal{U}$ be a function from $\kappa + 1$ to ordinals.

A sequence $\mathcal{U} = \langle U_{\alpha, \beta} \mid \alpha \leq \kappa, \beta < o^{\mathcal{U}}(\alpha)\rangle$ is a coherent sequence if for every $\alpha \leq \kappa, \beta < o^{\mathcal{U}}(\alpha)$, \[j_{U_{\alpha,\beta}}(\mathcal{U}) \restriction (\alpha, \beta) = \mathcal{U} \restriction (\alpha, \beta) \] 
where $\mathcal{U} \restriction (\alpha,\beta) = \langle U_{\gamma,\delta} \mid (\gamma < \alpha, \beta < o^{\mathcal{U}(\gamma)}) \vee (\gamma = \alpha \wedge \delta < \beta)\rangle$.
\end{definition}

\begin{definition}
Let $\vec{U}$ be a Mitchell increasing sequence of normal measures on $\kappa$. We say that $\vec U = \langle U_\alpha \mid \alpha < \delta\rangle$ is pre-coherent if there is a function $c$ such that for every $\alpha < \delta$, $j_{U_\alpha}(c)(\kappa) = \alpha$.
\end{definition}
Any coherent sequence of normal measures gives rise to a pre-coherent Mitchell increasing sequence of normal measures of the same length, but not necessarily vice-verse, see \cite{Ben-Neria}.

\begin{remark}
Let $\vec U$ be a Mitchell increasing sequence of normal measures. If $\len \vec U < \kappa^+$ then $\vec U$ is pre-coherent.
\end{remark}
\begin{proof}
If $\len \vec U = 0$, there is nothing to prove. So let us assume that it is non-zero.

Since the measures are discrete, there is a sequence of pairwise disjoint sets $\langle A_\alpha \mid \alpha < \len \vec U\rangle$ such that $A_\alpha \in U_\beta \iff \alpha = \beta$. 

Define $g(\zeta) = \alpha$ iff $\zeta \in A_\alpha$ and $0$ otherwise. Let also fix a sujective function $r \colon \kappa \to \len \vec U$. Now, $a = \range j(r)\restriction \kappa = j\image \len \vec U$. In particular, for $\pi_a$ the Mostowski collapse of the set $a$, and it is easy to verify that $\pi_a \circ j(g) \in M$ is the desired function. 

Finally, let 
\[c(\zeta) = \pi_{\range r \restriction \zeta} (g(\zeta)),\]
where this application is defined, and zero otherwise.
\end{proof}
Let $\vec U$ be a sequence of normal measures on $\kappa$, increasing in the Mitchell order, and let $o^{\vec U}(\kappa) = \zeta$. Let us define an iteration as well as a sequence of ordinals $\langle \gamma_\xi\mid \xi < \alpha_*\rangle$ as follows: 

$M_0 = V$, $j_{0,0} = id_V$. Given $M_{\alpha}$ and maps $j_{\beta,\alpha} \colon M_\beta\to M_\alpha$ for all $\beta < \alpha$, we pick the least index $\gamma_{\alpha} < \len j_{\alpha}(\vec{U})$ such that $\sup \{\beta < \alpha \mid j_{\beta, \alpha}(\gamma_\beta) = \gamma_\alpha\}$ is bounded (and we let $\gamma_0=0$). 

Let $M_{\alpha + 1} = \Ult(M_\alpha, j_{\alpha}(\vec U)(\gamma_\alpha))$ and let $j_{\alpha, \alpha+1}$ be the ultrapower map. Let $j_{\beta, \alpha + 1} = j_{\alpha,\alpha+1}\circ j_{\beta, \alpha}$ for all $\beta \leq \alpha$. 

If $\gamma_\alpha$ is undefined, we halt.

Let $\kappa_\alpha$ be the critical point of $j_{\alpha,\alpha+1}$ and let $\alpha_*$ be the length of the process. 

The following Lemma is a comparison argument due to Mitchell, in disguise.
\begin{lemma}
The iteration halts. Moreover, if $\len \vec{U} = \mu < \kappa$, it halts after $\omega^\mu$ many steps (ordinal exponentiation).
\end{lemma}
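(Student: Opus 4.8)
The plan is to separate the two assertions: the unconditional termination is Mitchell's comparison argument (this is the sense in which the lemma is a comparison ``in disguise''), while the exact count $\omega^\mu$ in the case $\mu<\kappa$ reduces to a transparent combinatorial recursion on the ordinals, which I would analyse using Cantor normal forms.

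For the termination itself I would argue by contradiction. If the process never halted it would define a class-length linear iteration $\langle M_\alpha, j_{\beta,\alpha}\rangle$ with critical points $\kappa_\alpha = j_\alpha(\kappa)$ strictly increasing. Fixing a sufficiently closed ordinal $\delta$ (a cardinal reflecting $\len j_\delta(\vec U)$, the pre-coherence function $c$, and the critical sequence), I would show that at stage $\delta$ every index $\gamma < \len j_\delta(\vec U)$ already has $\{\beta<\delta : j_{\beta,\delta}(\gamma_\beta) = \gamma\}$ cofinal in $\delta$, so that no admissible $\gamma_\delta$ exists and the process must already have halted below $\delta$. Here the pre-coherence of $\vec U$ is used exactly to recover $\gamma_\alpha$ from $\kappa_\alpha$ inside $M_{\alpha+1}$ (via $j_{\alpha,\alpha+1}(c)(\kappa_\alpha)=\gamma_\alpha$), which is what makes the iteration amenable to Mitchell's bound; I would cite that bound rather than reprove it.

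The heart of the statement is the count when $\mu = \len\vec U<\kappa$, and here everything becomes purely ordinal-combinatorial. Since every chosen index satisfies $\gamma_\beta < \mu < \kappa \le j_\beta(\kappa) = \kappa_\beta = \crit j_{\beta,\alpha}$, each $j_{\beta,\alpha}$ fixes $\gamma_\beta$; likewise $\crit j_\alpha = \kappa > \mu$ gives $\len j_\alpha(\vec U) = j_\alpha(\mu) = \mu$. Thus the requirement that $\sup\{\beta<\alpha : j_{\beta,\alpha}(\gamma_\beta)=\gamma_\alpha\}$ be bounded below $\alpha$ collapses to the requirement that $\{\beta<\alpha : \gamma_\beta = \gamma_\alpha\}$ be bounded in $\alpha$, and the whole recursion is: $\gamma_0=0$, and $\gamma_\alpha$ is the least $\gamma<\mu$ with $\{\beta<\alpha : \gamma_\beta=\gamma\}$ bounded in $\alpha$ (halting if no such $\gamma<\mu$ exists). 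I would then prove by induction on $\alpha$ that for $0<\alpha<\omega^\mu$, writing the Cantor normal form $\alpha = \omega^{\xi_1}c_1 + \dots + \omega^{\xi_k}c_k$ with $\mu>\xi_1>\dots>\xi_k$ and $c_i<\omega$ positive, one has $\gamma_\alpha = \xi_k$, the least exponent: for $\gamma<\xi_k$ the ordinals of least exponent $\gamma$ are cofinal in $\alpha$ (append $\omega^\gamma$ to initial segments of $\alpha$), so such $\gamma$ are inadmissible, whereas the $\beta<\alpha$ of least exponent exactly $\xi_k$ are bounded below $\alpha$ by the truncation $\omega^{\xi_1}c_1+\dots+\omega^{\xi_k}(c_k-1)$, so $\xi_k$ is admissible and least. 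Every $\alpha<\omega^\mu$ has all exponents $<\mu$, so the recursion is defined throughout $\omega^\mu$; and at $\alpha=\omega^\mu$ each $\gamma<\mu$ is inadmissible because ordinals of least exponent $\gamma$ are cofinal in $\omega^\mu$, so $\gamma_{\omega^\mu}$ is undefined and the process halts at exactly $\omega^\mu$.

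The main obstacle is not the Cantor normal form bookkeeping, which is routine, but the unconditional termination when $\mu\ge\kappa$: there the indices need no longer lie below the critical points, $j_\alpha(\mu)$ genuinely grows, and the clean reduction of the preceding paragraph is unavailable. In that regime one must run the full Mitchell comparison machinery, controlling the moving indices through the pre-coherence function, and this is the step I would expect to require the most care.
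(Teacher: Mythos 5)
Your treatment of the \emph{moreover} clause is correct and is essentially the computation the paper leaves to the reader: below $\omega^\mu$ every relevant object sits below all the critical points ($\gamma_\beta < \mu < \kappa \leq \kappa_\beta$ and $\len j_\alpha(\vec U) = \mu$), so the admissibility condition degenerates to ``$\{\beta < \alpha : \gamma_\beta = \gamma\}$ is bounded in $\alpha$'', and the Cantor normal form induction showing that $\gamma_\alpha$ is the least exponent of $\alpha$ goes through exactly as you describe, with the process halting at $\omega^\mu$ because $\omega^\mu$ is additively closed.

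The gap is in the unconditional termination, which is precisely the part you defer. The paper does not cite Mitchell's bound as a black box; it proves termination outright by a pressing-down argument, and that argument is the one missing idea in your sketch. Run the iteration for $\lambda = (\len(\vec U)^\kappa)^+$ steps. At a limit stage $\alpha$, since $M_\alpha$ is a direct limit, $\gamma_\alpha = j_\alpha(f)(\kappa_{\beta_0},\dots,\kappa_{\beta_{n-1}})$ for some $f \in V$ and finitely many $\beta_0 < \dots < \beta_{n-1} < \alpha$; by Fodor's lemma there is a stationary $S \subseteq \lambda$ on which $f$ and the tuple $\vec\beta$ are constant, and then for $\alpha \in S \cap \acc S$ one gets $\gamma_\alpha = j_{\beta,\alpha}(\gamma_\beta)$ for every $\beta \in S \cap \alpha$ (the parameters $\kappa_{\beta_i}$ lie below $\crit j_{\beta,\alpha}$), so $\{\beta < \alpha : j_{\beta,\alpha}(\gamma_\beta) = \gamma_\alpha\}$ is cofinal in $\alpha$ --- contradicting the choice of $\gamma_\alpha$ as an admissible index. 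Your alternative sketch --- finding a ``sufficiently closed'' $\delta$ at which \emph{every} $\gamma < \len j_\delta(\vec U)$ is hit cofinally --- asks for much more than is needed, and I do not see how reflection delivers it: when $\len \vec U \geq \kappa$ there are at least $\delta$ many candidate indices at stage $\delta$ and only $\delta$ many prior stages, so no counting or closure argument applies; the Fodor argument succeeds precisely because it only has to catch the single index actually chosen at stage $\alpha$. Note also that pre-coherence is neither assumed in this lemma nor used anywhere in its proof, so your reliance on the function $c$ to recover $\gamma_\alpha$ from $\kappa_\alpha$ is a detour rather than the mechanism that makes the argument work.
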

\begin{proof}
Let us assume towards a contradiction that the iteration continues for $\lambda = (\len(\vec U)^\kappa)^{+}$ many steps (cardinal exponentiation). 

Consider $\gamma_\alpha$ for limit ordinals $\alpha < \lambda$. As we take direct limits at limit steps of the iteration, each such $\gamma_\alpha$ is the image of an ordinal from a previous step in the iteration. More precisely, there are finitely many ordinals $\beta_0 < \dots < \beta_{n-1} < \alpha$ and a function $f \colon \kappa^n \to \len \vec U$ in $V$, such that $j_{\alpha}(f)(\kappa_{\beta_0}, \dots, \kappa_{\beta_{n-1}}) = \gamma_{\alpha}$. 

By Fodor's lemma, there is a stationary set $S \subseteq \lambda$, such that $f$ and the finite sequence $\beta_0, \dots, \beta_{n-1}$ are fixed. Pick $\alpha \in S \cap \acc S$. Then, for every $\beta \in S \cap \alpha$, $\gamma_{\alpha} = j_{\beta,\alpha}(\gamma_\beta)$, a contradiction to our choice of $\gamma_\alpha$.  

For the moreover part, one can verify by induction that for every non-zero ordinal $\alpha < \kappa$ of Cantor's normal form $\alpha = \omega^{\beta_0}\cdot n_0 + \omega^{\beta_1}\cdot n_1 + \cdots + \omega^{\beta_{m}} \cdot n_m$, where $\beta_0 > \beta_1 > \cdots > \beta_m$, and $n_i \neq 0$ for all $i$, $\gamma_\alpha = \beta_m$. Since the length of $\vec{U}$ in this case is below the critical point of $j_\alpha$, the process terminates at step $\alpha_* = \omega^\mu$. 
\end{proof}

Let $P = \langle \kappa_\alpha \mid \alpha < \alpha_*\rangle$, be the sequence of the critical points. In this case, the model $M_{\alpha_*}[P]$ clearly contains sets which are not in $M_\beta$ for large $\beta$. The reason is that for every infinite $\beta$, $M_\beta$ is not closed under countable sequences and in particular will not contain the initial segments of $P$.  

Thus, the correct models for this theorem are $M_{\alpha}[P \restriction \alpha]$. Note that for $\alpha$ limit, $\kappa_{\alpha}$ is (typically) a singular cardinal in this model. Thus, in those cases there is no elementary embedding from $M_{\alpha}[P\restriction \alpha]$ to $M_{\beta}[P \restriction \beta]$. 

\begin{lemma}
$M_{\alpha_*}[P] \subseteq \bigcap_{\alpha < \alpha_*} M_\alpha[P \restriction \alpha]$.
\end{lemma}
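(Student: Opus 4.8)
The plan is to mirror the proof of Lemma~\ref{lemma:BD-easy-direction}, localising the iteration to its tail. Fix $\alpha<\alpha_*$; it suffices to prove $M_{\alpha_*}[P]\subseteq M_\alpha[P\restriction\alpha]$. Since $M_\alpha[P\restriction\alpha]$ is a model of $\ZFC$ containing $M_\alpha\cup\{P\restriction\alpha\}$ and $M_{\alpha_*}[P]$ is by definition the least $\ZFC$-model containing $M_{\alpha_*}\cup\{P\}$, the inclusion will follow once I establish the two statements (i) $M_{\alpha_*}\subseteq M_\alpha$ and (ii) $P\in M_\alpha[P\restriction\alpha]$.

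For (i) I would argue that the \emph{tail} iteration $\langle M_\beta, j_{\gamma,\beta}\mid \alpha\leq\gamma\leq\beta\leq\alpha_*\rangle$ is definable inside $M_\alpha$ from the single ordinal parameter $\gamma_\alpha\in M_\alpha$, exactly as the full iteration is definable in $V$. The point where this is not completely routine is the selection rule: at step $\beta$ one chooses the least $\gamma_\beta<\len j_\beta(\vec U)$ for which $\{\gamma<\beta\mid j_{\gamma,\beta}(\gamma_\gamma)=\gamma_\beta\}$ is bounded in $\beta$, and a priori this references the pre-history $\gamma<\alpha$ that $M_\alpha$ cannot see. The key observation is that for every $\beta>\alpha$ this rule is in fact \emph{internal to the tail}: writing the accumulation set as the union of its part with $\gamma<\alpha$ and its part with $\alpha\leq\gamma<\beta$, the former is a subset of $\alpha$ and hence has supremum $\leq\alpha<\beta$, so it can never witness unboundedness. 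Thus whether the accumulation set is bounded in $\beta$, and therefore the choice of $\gamma_\beta$, depends only on $\langle\gamma_\gamma\mid\alpha\leq\gamma<\beta\rangle$ and the embeddings $j_{\gamma,\beta}$ for $\alpha\leq\gamma<\beta$ --- all of which the iteration being built from $M_\alpha$ produces internally. Running this recursion in $M_\alpha$ (which models $\ZFC$) with seed $\gamma_\alpha$ reconstructs the whole tail, and hence its direct limit $M_{\alpha_*}$, as a definable subclass of $M_\alpha$; in particular $M_{\alpha_*}\subseteq M_\alpha$.

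For (ii), note that $P\restriction\alpha\in M_\alpha[P\restriction\alpha]$ by construction. The remaining segment $\langle\kappa_\beta\mid\alpha\leq\beta<\alpha_*\rangle$ is precisely the sequence of critical points $\crit j_{\beta,\beta+1}$ of the tail iteration reconstructed in step (i), so it lies in $M_\alpha$, and therefore in $M_\alpha[P\restriction\alpha]$. Concatenating the two segments gives $P\in M_\alpha[P\restriction\alpha]$, completing the argument; as $\alpha<\alpha_*$ was arbitrary we obtain $M_{\alpha_*}[P]\subseteq\bigcap_{\alpha<\alpha_*}M_\alpha[P\restriction\alpha]$.

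The main obstacle is the localisation claim inside step (i): one must be sure that discarding the indices $\gamma<\alpha$ does not change the outcome of the global selection rule for $\beta>\alpha$, which is where the specific form of Mitchell's comparison and the pre-coherence of $\vec U$ are doing their work. I would double-check the boundary case $\beta=\alpha$ separately --- there $\gamma_\alpha$ genuinely depends on the pre-history and is not internally recoverable by $M_\alpha$, which is exactly why it is carried as a parameter (alternatively it can be read off from $P\restriction\alpha$ using the pre-coherence function $c$). Everything else is the same bookkeeping as in the vanilla case.
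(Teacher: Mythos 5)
Your argument is correct and is essentially the paper's proof: the paper disposes of this lemma in one sentence ("working in $M_\alpha[P\restriction\alpha]$ one can compute from the parameter $\gamma_\alpha$ the rest of the iteration, and thus $P\setminus\alpha$"), and your steps (i) and (ii) are exactly that observation spelled out. The localisation point you flag --- that the indices $\gamma<\alpha$ contribute a set of supremum at most $\alpha<\beta$ and so never affect the boundedness test at stages $\beta>\alpha$ --- is the detail the paper leaves implicit, and you have verified it correctly.
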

\begin{proof} 
Working in $M_{\alpha}[P\restriction \alpha]$ one can compute from the parameter $\gamma_\alpha$ the rest of the models $M_{\alpha'}$ for $\alpha' > \alpha$ and the iteration, and thus $P \setminus \alpha$. \end{proof}

\begin{theorem}
Let us assume that $\len \vec{U} \leq \kappa^+$ and that it is pre-coherent then $M_{\alpha_*}[P]$ is closed under $\kappa$-sequences.
\end{theorem}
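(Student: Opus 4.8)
The plan is to upgrade the proof of Lemma~\ref{lemma:countable-closure-vanilla-Prikry} from countable to $\kappa$-sequences. Since $M_{\alpha_*}[P]$ models $\ZFC$ and contains all ordinals, it suffices to show that every $\kappa$-sequence of ordinals in $V$ lies in $M_{\alpha_*}[P]$. So fix $\vec\alpha=\langle\alpha_\xi\mid\xi<\kappa\rangle\in V$. First I would establish the finite-support representation for this iteration, in the spirit of Lemma~\ref{lemma:representing-elements-GS}: every $x\in M_{\alpha_*}$ has the form $x=j_{\alpha_*}(f)(\kappa_{\beta_0},\dots,\kappa_{\beta_{n-1}})$ for some $f\in V$ and a finite increasing tuple $\beta_0<\dots<\beta_{n-1}<\alpha_*$, obtained from {\L}o\v{s}'s theorem at successor steps and from finite support in the direct limits at limit steps. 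Applying this to each $\alpha_\xi$ yields functions $f_\xi\in V$ and support tuples $\vec\beta_\xi\in[\alpha_*]^{<\omega}$ with $\alpha_\xi=j_{\alpha_*}(f_\xi)(\kappa_{\beta_0^\xi},\dots,\kappa_{\beta_{n_\xi-1}^\xi})$.

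The function data is placed in $M_{\alpha_*}$ for free, exactly as in the Prikry case, but now exploiting that the index set has length precisely $\crit j_{\alpha_*}=\kappa$. Writing $\vec f=\langle f_\xi\mid\xi<\kappa\rangle\in V$, for every $\xi<\kappa$ we have $j_{\alpha_*}(\xi)=\xi$, hence $j_{\alpha_*}(\vec f)(\xi)=j_{\alpha_*}(f_\xi)$, so that
\[
\langle j_{\alpha_*}(f_\xi)\mid\xi<\kappa\rangle=j_{\alpha_*}(\vec f)\restriction\kappa\in M_{\alpha_*}.
\]
Consequently, once the support assignment $\xi\mapsto\vec\beta_\xi$ is available in $M_{\alpha_*}[P]$, I can recover $\vec\alpha$ there by evaluating each $j_{\alpha_*}(f_\xi)$ at the tuple $(\kappa_{\beta_0^\xi},\dots)$, reading the critical points off the parameter $P$. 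Everything therefore reduces to placing the \emph{support sequence} $\langle\vec\beta_\xi\mid\xi<\kappa\rangle$ inside $M_{\alpha_*}[P]$, and this is where the hypotheses enter.

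When $\len\vec U<\kappa$ the comparison lemma gives $\alpha_*=\omega^{\len\vec U}<\kappa$, so each $\vec\beta_\xi$ lies in a fixed $V_\theta$ with $\theta<\kappa$; such objects are fixed by $j_{\alpha_*}$, and the very same restriction trick gives $\langle\vec\beta_\xi\mid\xi<\kappa\rangle=j_{\alpha_*}(\langle\vec\beta_\xi\mid\xi<\kappa\rangle)\restriction\kappa\in M_{\alpha_*}$, completing this case without pre-coherence. The genuinely hard case is $\len\vec U\in\{\kappa,\kappa^+\}$, where $\alpha_*$ may exceed $\kappa$ and the supports range over a large index set. Here a naive attempt to record $\langle\vec\beta_\xi\rangle$ is circular: it is itself a $\kappa$-sequence of ordinals below $\alpha_*$, and $M_{\alpha_*}$ is \emph{not} $\kappa$-closed — indeed all of the intermediate limit models $M_\alpha$ fail $\kappa$-closure (the cofinal sequence $P\restriction\alpha$ is never an element of $M_\alpha$), which is precisely why the parameter $P$ must do real work. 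The plan is to invoke pre-coherence: the function $c$ with $j_{U_\alpha}(c)(\kappa)=\alpha$ lets $M_{\alpha_*}[P]$ read off from each $\kappa_\eta\in P$ its degree $\gamma_\eta$, and hence reconstruct the entire scaffolding $\langle M_\eta,\,j_{\eta,\eta'},\,\gamma_\eta\mid\eta\le\eta'\le\alpha_*\rangle$ from $P$ alone, just as in the proof of the inclusion $M_{\alpha_*}[P]\subseteq\bigcap_{\alpha<\alpha_*}M_\alpha[P\restriction\alpha]$; the bound $\len\vec U\le\kappa^+$ keeps the indexing of the measures within reach throughout this reconstruction.

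The step I expect to be the main obstacle is the last one: bridging from ``$M_{\alpha_*}[P]$ knows the whole iteration'' to ``$M_{\alpha_*}[P]$ knows which finite set of critical points represents each $\alpha_\xi$'' in a uniformly definable way. In the Prikry and Gitik--Sharon settings this difficulty is invisible because the relevant supports are \emph{initial} segments $P\restriction m$, and the length $m$ is recoverable from the arity of the representing function; for Radin the supports are arbitrary finite subsets of the class of critical points, so their recovery must be made uniform without secretly re-using the $\kappa$-closure of a model that does not possess it. Making this support-recovery precise — rather than the more routine function-side bookkeeping — is the crux, and it is exactly the point at which pre-coherence and the cardinality bound $\len\vec U\le\kappa^+$ are indispensable.
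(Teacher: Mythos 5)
Your skeleton matches the paper's: represent each $\alpha_\xi$ as $j_{\alpha_*}(f_\xi)$ evaluated at a finite tuple from $P$, capture $\langle j_{\alpha_*}(f_\xi)\mid\xi<\kappa\rangle$ as $j_{\alpha_*}(\vec f)\restriction\kappa$ using $\crit j_{\alpha_*}=\kappa$, and reduce everything to recovering the supports inside $M_{\alpha_*}[P]$. Your treatment of the case $\alpha_*<\kappa$ is also fine. But for the main case you stop exactly where the proof begins: you correctly announce that support-recovery is the crux and that pre-coherence must be used, yet you do not give the mechanism, and the mechanism is the entire content of the theorem. Saying that $M_{\alpha_*}[P]$ can ``reconstruct the scaffolding of the iteration from $P$'' does not close the gap: the assignment $\xi\mapsto\vec\beta_\xi$ was chosen in $V$, it is a $\kappa$-sequence of finite subsets of $\alpha_*$, and no amount of internal knowledge of the iteration tells $M_{\alpha_*}[P]$ \emph{which} finite subsets were chosen. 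One must re-encode each support element by data living in $V$ that can itself be packaged by the $j_{\alpha_*}(\cdot)\restriction\kappa$ trick.

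The missing idea is the following recursive description of elements of $P$. For $\zeta\in P$ with $\beta=\otp(P\cap\zeta)$, the index $\gamma_\beta$ is represented over $M_\beta$ by a canonical function $g$ on $\zeta$ (here the hypothesis $\len\vec U\le\kappa^+$ enters, giving $\gamma_\beta<\kappa_\beta^+$ so that a canonical function exists), and one shows that $\zeta$ is the \emph{least} $\rho\in P$ above some $\zeta'<\zeta$ in $P$ satisfying $j_{\beta,\alpha_*}(g)(\rho)=j_{\alpha_*}(c)(\rho)$, where $c$ is the pre-coherence function; minimality holds because agreement $j_{\delta,\beta}(\gamma_\xi)=\gamma_\beta$ can occur for only boundedly many $\xi$, by the very definition of the indices $\gamma_\alpha$. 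Unwinding $g$ through the direct limits, each $\zeta_k$ in the support of $\alpha_\xi$ becomes $j_{\alpha_*}(g_k)^P(\zeta_0,\dots,\zeta_{k-1})$ for $V$-functions $g_0,\dots,g_{n-1}$, where $j_{\alpha_*}(h)^P$ denotes this minimization along $P$ against $j_{\alpha_*}(c)$. The sequence $\langle f^\xi,g^\xi_0,\dots,g^\xi_{n_\xi-1}\mid\xi<\kappa\rangle$ now lives in $V$ as a $\kappa$-sequence of \emph{sets of $V$} rather than of ordinals of the iteration, so it is captured by applying $j_{\alpha_*}$ and truncating at $\kappa$; then $M_{\alpha_*}[P]$ decodes first the supports and then the $\alpha_\xi$ from $P$. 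Without this coding step your argument does not go through, and as written it risks circularity (recovering the support sequence is itself an instance of $\kappa$-closure for a model you have not yet shown to be $\kappa$-closed).
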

\begin{proof}
Fix a function $c$ witnessing the pre-coherency of $\vec U$.

Let $\alpha \in \Ord$. Then, $\alpha$ is represented by some function $f \colon \kappa^n \to \Ord$ and a finite sequence $\zeta_0 < \cdots < \zeta_{n-1}$ in $P$, such that:
\[\alpha = j_{\alpha_*}(f)(\zeta_0,\dots, \zeta_{n-1}).\]
The main challenge is to "describe" which elements from $P$ we evaluate $j_{\alpha_*}(f)$ in, in order to obtain $\alpha$, in particular where $\len \vec U \geq \kappa$.



\begin{lemma}
Let $\zeta \in P$, $\beta = \otp(P\cap \zeta)$. 

Then, there is a function $g \in M_\beta$ and $\zeta' < \zeta$ in $P$ such that $\zeta = \min (\{\rho \in P \mid \rho > \zeta' \text{ and } j_{\beta,\alpha_*}(g)(\rho) = j_{\alpha_*}(c)(\rho)\}$.

In particular, there are $\zeta_0, \dots, \zeta_{n-1} < \zeta$ in $P$ and $h \colon \kappa^n \to o(\kappa)$ in $V$ such that 
\[\zeta = \min (\{\rho \in P \mid \rho > \zeta_{n-1} \text{ and } j_{\alpha_*}(h)(\zeta_0,\dots, \zeta_{n-1}, \rho) = j_{\alpha_*}(c)(\rho)\}.\]
\end{lemma}
\begin{proof}
For $\zeta \in P$ which is not a limit point, the claim is obvious: its Mitchell order under $\vec U$ is simply $0$ and one can read it from its predecessor.

For $\zeta$ limit, as $\gamma_\beta < \zeta^+$, there is a canonical function representing it, $g\in M_\beta$. As $\beta$ is limit, $g = j_{\bar\beta, \beta}(\bar g)$ for some $\bar g \in M_{\bar \beta}$, which is the canonical function for some ordinal $\bar\delta$. Being canonical, the ordinal $\bar\delta$ is definable from $\bar{g} = g \restriction \zeta'$ for some $\zeta'$ and in general, for every $\zeta' \leq \xi < \zeta$ in $P$, $g(\zeta)$ is the unique ordinal which is the height of the canonical function $g \restriction \xi$. Since $j_{\delta,\beta}(g \restriction \xi) = g$ for $\delta = \otp(P \cap \xi)$, $\xi \in P$, we conclude that if equality holds, then $j_{\delta,\beta}(\gamma_\xi) = \gamma_\beta$, but this can only hold for boundedly many values in $P$. 

The second part follows from the assumption that we are taking direct limits at limit steps and thus $g$ is the image of a function from a shorted iteration. Thus, by induction, one can represent $g$ as $j_{\beta}(h)(\zeta_0,\dots, \zeta_{n-1})$ (recall that $\beta = \otp(P\cap \zeta)$), $\zeta_0,\dots, \zeta_{n-1} < \zeta$ in $P$ and $h \colon \kappa^n \to o(\kappa)$. 
\end{proof}

Let  
\[\begin{matrix} j_{\alpha_*}(h)^P(\zeta_0,\dots,\zeta_{n-1}) & = & \min (\{\rho \in P \mid & \rho > \zeta_{n-1}  \text{ and }  \\ 
& & &j_{\alpha_*}(h)(\zeta_0,\dots, \zeta_{n-1}, \rho) = o^{j_{\alpha_*}(\mathcal{U})}(\rho)  \}. \end{matrix}\]

Given $\zeta \in P$ we may find an increasing sequence of elements of $P$, $\zeta_0, \dots, \zeta_n$ and functions $g_0, \dots, g_{n-1} \in V$ such that: 
\[\zeta_k = j_{\alpha_*}(g_k)^P(\zeta_0,\dots, \zeta_{k-1})\]

This is always possible, using standard arguments, see \cite{GitikKaplan}.

So, we conclude that each ordinal $\alpha$ can be represented using a function $f$ and finitely many functions $g_0,\dots, g_{n-1}$ that allows us to "read" the relevant elements from $P$. 

Let $\langle \alpha_i \mid i < \kappa\rangle$ be a sequence of ordinals. Let $\langle f^i, g_0^i,\dots, g_{n_i - 1}^i \mid i < \kappa\rangle$ be a choice of representatives, as above. Namely, take $\beta^i_j = j_{\alpha_*}^P(\beta^i_0,\dots, \beta^i_{j-1})$ and $\alpha_i = j_{\alpha_*}(f)(\beta^i_0,\dots, \beta^i_{n_i-1})$. 

Apply $j_{\alpha_*}$, and truncate at $\kappa$, and then compute from $P$ first the indexes $\beta^0_i, \dots, \beta^{n-1}_i$ and then the ordinals $\alpha_i$.  
\end{proof}

Let us analyse $\cf \alpha_*$ (which is the cofinality of $j_{\alpha_*}(\kappa)$), under the assumption that $\len \vec U < \kappa^+$.
\begin{lemma}
Let us assume that $\vec U$ is weakly-coherent and that $\cf \len \vec U \leq \kappa$.
 
Let $\lambda = \cf^{M_{\alpha_*}[P]}(j_{\alpha_*}(\kappa))$. Then, 
\[\lambda = \begin{cases} \omega & \len \vec U \text{ is a successor ordinal} \\
\cf^{V}( \len \vec U) & \cf^{V}( \len \vec U) < \kappa \\
\omega &  \cf^{V}( \len \vec U) = \kappa 
\end{cases}\]
\end{lemma}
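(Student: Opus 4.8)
The plan is to reduce the whole statement to a computation of $\cf^{V}(\alpha_*)$ and then to read that off from the index function $\alpha\mapsto\gamma_\alpha$.

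First I would note that $j_{\alpha_*}(\kappa)=\sup_{\alpha<\alpha_*}\kappa_\alpha$ and that $P$ enumerates $\langle\kappa_\alpha\mid\alpha<\alpha_*\rangle$ increasingly, so $P\in M_{\alpha_*}[P]$ is a cofinal map of $\alpha_*$ into $j_{\alpha_*}(\kappa)$ and hence $\cf^{M_{\alpha_*}[P]}(j_{\alpha_*}(\kappa))=\cf^{M_{\alpha_*}[P]}(\alpha_*)$. Now two inclusions pin this down. Since $M_{\alpha_*}[P]\subseteq V$, we always have $\cf^{M_{\alpha_*}[P]}(\alpha_*)\ge\cf^{V}(\alpha_*)$. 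Conversely, by the $\kappa$-closure of $M_{\alpha_*}[P]$ proved above, if $\cf^{V}(\alpha_*)=\theta\le\kappa$ then an increasing cofinal sequence of length $\theta$ in $\alpha_*$ chosen in $V$, composed with $P$, is a $\theta$-sequence of ordinals all lying in $M_{\alpha_*}[P]$, hence belongs to $M_{\alpha_*}[P]$; this yields $\cf^{M_{\alpha_*}[P]}(\alpha_*)\le\theta$. Therefore, as long as $\cf^{V}(\alpha_*)\le\kappa$, we obtain the clean identity $\lambda=\cf^{V}(\alpha_*)$. The task is thus to compute $\cf^{V}(\alpha_*)$ and to check it never exceeds $\kappa$; note that the hypotheses $\len\vec U\le\kappa^{+}$ and $\cf\len\vec U\le\kappa$ together exclude $\len\vec U=\kappa^{+}$, so $\cf^{V}(\len\vec U)\le\kappa$ is available throughout.

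For $\len\vec U<\kappa$ the halting lemma already gives $\alpha_*=\omega^{\len\vec U}$ (ordinal exponentiation), and the three cases of the statement are immediate from the behaviour of $\cf^{V}(\omega^{\len\vec U})$: it is $\omega$ when $\len\vec U$ is a successor, since $\omega^{\nu+1}=\sup_{n<\omega}\omega^{\nu}\cdot n$, and it equals $\cf^{V}(\len\vec U)$ when $\len\vec U$ is a limit, since $\omega^{\len\vec U}=\sup_{\mu<\len\vec U}\omega^{\mu}$. For $\len\vec U\ge\kappa$ the closed form need not persist, so I would instead stratify the iteration by its top index. Using the representation of $\gamma_\alpha$ from the previous lemma as $j_{\alpha}(h)(\kappa_{\beta_0},\dots,\kappa_{\beta_{n-1}})$ for some $h\colon\kappa^{n}\to o(\kappa)$, one isolates the record stages at which a genuinely new top-level index is first realized; in the successor case these records recur in an $\omega$-pattern (each realization of the top index must be followed by a completed sub-iteration for $\vec U$ truncated below it before the next), while in the limit case with $\cf^{V}(\len\vec U)=\theta<\kappa$ the records are cofinal of order type $\theta$. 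This gives the first two cases.

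The main obstacle is the third case $\cf^{V}(\len\vec U)=\kappa$, where the identity of the first paragraph now \emph{forces} me to prove $\cf^{V}(\alpha_*)=\omega$: any larger value, in particular $\kappa$, would by $\kappa$-closure be preserved as $\kappa$ in $M_{\alpha_*}[P]$ and contradict the asserted value $\omega$. Here the counting of records has length $\kappa$ in $V$, and the point is that reflection at the critical point collapses this to $\omega$. Fixing a cofinal $\langle\mu_i\mid i<\kappa\rangle$ in $\len\vec U$, realizing the demand $\mu_i$ at the top requires first closing off a sub-iteration whose length is cofinal in the current image of $\kappa$; since $\crit=\kappa$ is a fixed regular cardinal, the finite-support representation of the $\gamma$'s forces these closure stages to stack in order type $\omega$. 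Concretely I would define by recursion an $\omega$-chain $\eta_0<\eta_1<\cdots$, with $\eta_{k+1}$ the least stage closing the block opened at $\eta_k$ together with the next top demand, prove $\sup_k\eta_k=\alpha_*$ from the halting criterion (no index survives past $\alpha_*$, so the chain cannot stall at a proper initial segment), and conclude $\cf^{V}(\alpha_*)=\omega$. The delicate step I expect to fight with is precisely the cofinality $\sup_k\eta_k=\alpha_*$: ruling out that the iteration stalls at a proper initial segment of higher $V$-cofinality, and it is here that $\cf\len\vec U=\kappa$ (as opposed to $\kappa^{+}$, the regime in which the Bukovsk\'y--Dehornoy theorem is claimed to fail) is essential.
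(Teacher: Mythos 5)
Your proposal is correct, and its core is the same as the paper's: everything reduces to computing $\cf^{V}(\alpha_*)$ from the behaviour of the index sequence $\langle\gamma_\alpha\mid\alpha<\alpha_*\rangle$, with the same three-way case split --- an $\omega$-sequence of stages realizing the top index in the successor case, the least stages realizing a fixed cofinal sequence of indices when $\cf\len\vec U<\kappa$, and, in the delicate case $\cf\len\vec U=\kappa$, an $\omega$-chain in which $\alpha_{n+1}$ is the least stage realizing (the image of) the $\kappa_{\alpha_n}$-th member of the chosen cofinal sequence, whose supremum is forced to be $\alpha_*$ by the halting criterion. The genuine divergence is in how the $V$-computation is transferred to $M_{\alpha_*}[P]$. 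The paper handles the upper bound case by case, using pre-coherency to argue that each witnessing cofinal sequence is computable from $P$ inside the model, and uses $M_{\alpha_*}[P]\subseteq V$ only for the lower bound; you instead prove once and for all that $\lambda=\cf^{V}(\alpha_*)$ whenever $\cf^{V}(\alpha_*)\le\kappa$, by combining $M_{\alpha_*}[P]\subseteq V$ with the $\kappa$-closure theorem proved immediately beforehand. This is a clean uniformization and is legitimate under the present hypotheses (the dependence on pre-coherency is not removed, only repackaged inside the closure theorem); it also quietly fixes the transfer in Case 2, which the paper leaves implicit. Your further shortcut of reading $\cf(\omega^{\len\vec U})$ off the ``moreover'' clause of the halting lemma applies only when $\len\vec U<\kappa$, and you correctly fall back on the stage-by-stage analysis for $\len\vec U\in[\kappa,\kappa^{+})$. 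The one step you flag as delicate --- that the $\omega$-chain in the third case cannot stall at a proper initial segment --- is indeed where the work lies, and the paper's contradiction argument (continuity of $\kappa_{\alpha}$ at the limit stage makes the pushed-forward $\kappa$-sequence of indices cofinal there, so some $\rho_n$ overtakes the putative $\gamma_{\alpha_\omega}$ and its realizations accumulate unboundedly) is the argument you would need to supply.
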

\begin{proof}
We split into cases.

{\bf Case 0:} $\len \vec U$ is a successor ordinal. In this case $\cf \alpha_*$ is $\omega$. Indeed, the ordinals $\alpha$ in which $\gamma_{\alpha}$ is the last measure in $j_\alpha(\vec U)$ form an $\omega$-sequence, $\alpha_n$ cofinal at $\alpha_*$. Using the pre-coherency of the sequence, $\gamma_\alpha$ can be read from $P$.

{\bf Case 1:} $\cf \len \vec U = \mu < \kappa$. Pick a cofinal sequence $\langle \rho_i \mid i < \mu\rangle$, cofinal at $\len U$. Since $\mu < \kappa$ which is the critical point, for all $\alpha$ (and in particular, for $\alpha_*$), $\cf^{M_{\alpha}} \len j_{\alpha}(\vec U) = \mu$. 

Consider $\alpha_*$. For each $i$, there are unboundedly many $\beta < \alpha_*$ such that $\gamma_\beta = j_{\beta}(\rho_i)$. Let $\beta_i$ be the least such ordinal. Then, $\beta_i$ are increasing, and cofinal at $\alpha_*$. Indeed, below each $\beta_i$, the images of $\rho_j$, for all $j < i$ as well as all lower ordinals appear unboundedly. Again, using the pre-coherency, this cofinal sequence at $j_{\alpha_*}(\kappa)$ can be computed in $M_{\alpha_*}[P]$. The cofinality cannot be lower than that, as otherwise, the cofinality of $\mu$ must be collapsed, but $M_{\alpha_*}[P] \subseteq V$.

{\bf Case 2:} $\cf^V \len \vec U = \kappa$. In this case we need to show that $\cf \alpha_* = \omega$. 

Fix a sequence $\vec \delta = \langle \delta_\xi \mid \xi < \kappa\rangle$ cofinal at $\len\vec U$. Let us define by induction a sequence of ordinals cofinal at $\alpha_*$. 

Let $\alpha_0 = 0$. 

Let $n < \omega$. Denote $\rho_n = j_{\alpha_n + 1}(\vec \delta)(j_{\alpha_n}(\kappa))$. Since $\rho_n < \len\vec U$, we may define $\alpha_{n+1}$ to be the least ordinal such that $\gamma_{\alpha_{n+1}} = j_{\alpha_n+1,\alpha_{n+1}}(\rho_n)$. 

Let us show that $\alpha_* = \sup \alpha_n$. Indeed, let $\alpha_\omega = \sup \alpha_n$, and let us assume towards a contradiction, that $\alpha_\omega \neq \alpha_*$, so $\gamma_{\alpha_*}$ is an ordinal $<j_{\alpha_\omega}(\len\vec U)$. 

Since $\alpha_\omega$ is a limit ordinal, $\kappa_{\alpha_{\omega}} = \sup_{n<\omega} \kappa_{\alpha_n}$ and thus $j_{\alpha_\omega}(\vec \delta)$ is cofinal at $j_{\alpha_\omega}(\len\vec U)$, and there is $n$ such that the $\kappa_{\alpha_n}$-th point of this sequence exceeds $\gamma_{\alpha_\omega}$. Without loss of generality, $\gamma_{\alpha_\omega} = j_{\alpha_n,\alpha_\omega}(\bar\gamma)$. 

Now, as for every $m > n$, $\rho_{m} > j_{\alpha_n,\alpha_{m}}(\bar\gamma)$, there must be an ordinal $\beta$ between $\alpha_m$ and $\alpha_{m+1}$ such that $j_{\beta,\alpha_{\omega}}(\gamma_\beta) = \gamma$, contradicting the definition of $\gamma_{\alpha_\omega}$.
\end{proof}
We conclude that if $\len\vec U < \kappa^+$ then $\cf \alpha_* < \kappa$. Moreover, this can be decoded from $P$ itself, and thus this cofinality is going to be correctly computed in $M_{\alpha_*}[P]$. 

\begin{lemma}
If $\cf^V(\len \vec U) = \kappa^+$ then $M_{\alpha_*}[P]\models j_{\alpha_*}(\kappa)$ is regular.
\end{lemma}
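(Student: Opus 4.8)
The plan is to reduce the statement to a single assertion about the cofinality of the iteration length $\alpha_*$ as computed inside $M_{\alpha_*}[P]$, and then to exploit that the hypothesis $\cf^V \len \vec U = \kappa^+$ (equivalently $\len \vec U = \kappa^+$) pushes the relevant length past the critical point $\kappa$. First I would record the structural fact that $\alpha_*$ is a limit ordinal: at a successor stage the set $\{\beta < \alpha : j_{\beta,\alpha}(\gamma_\beta) = \gamma\}$ is automatically bounded, so a valid index $\gamma_\alpha$ always exists and the process never halts at a successor. Hence $\alpha_*$ is a limit, and as in the computation for the Prikry case one checks $j_{\alpha_*}(\kappa) = \sup_{\alpha < \alpha_*} \kappa_\alpha$: any $\xi < j_{\alpha_*}(\kappa)$ is $j_{\alpha,\alpha_*}(\bar\xi)$ for some $\alpha$ with $\bar\xi < \kappa_\alpha = \crit j_{\alpha,\alpha_*}$, hence $\xi = \bar\xi < \kappa_\alpha$.

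Consequently $P = \langle \kappa_\alpha \mid \alpha < \alpha_*\rangle$ is, inside $M_{\alpha_*}[P]$, a continuous strictly increasing cofinal map from $\alpha_*$ onto a cofinal subset of $j_{\alpha_*}(\kappa)$. Therefore $\cf^{M_{\alpha_*}[P]} j_{\alpha_*}(\kappa) = \cf^{M_{\alpha_*}[P]} \alpha_*$, and the lemma is equivalent to the assertion that $\alpha_*$ has no cofinal subset of order type $< j_{\alpha_*}(\kappa)$ in $M_{\alpha_*}[P]$. Note that one cannot simply quote regularity in $V$: since $M_{\alpha_*}[P] \subseteq V$, the ordinal $\alpha_*$ may well be singular in $V$ (I expect $\cf^V \alpha_* = \kappa^+$) while still being ``regular'' from the viewpoint of the inner model, and indeed this gap is presumably the source of the later failure of the intersection theorem.

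The heart of the argument is to read off the Mitchell orders along $P$ and to use the repeat-point structure forced by $\len \vec U = \kappa^+$. Fixing the pre-coherence function $c$ with $j_{U_\alpha}(c)(\kappa) = \alpha$, the index $\gamma_\beta$ used at each stage is computable from $\kappa_\beta$ and $j_{\alpha_*}(\vec U) \in M_{\alpha_*}$, so the map $\kappa_\beta \mapsto \gamma_\beta$ lives in $M_{\alpha_*}[P]$. Now $\len j_{\alpha_*}(\vec U) = j_{\alpha_*}(\kappa^+) = (j_{\alpha_*}(\kappa))^{+M_{\alpha_*}}$ is regular in $M_{\alpha_*}$ and strictly above $j_{\alpha_*}(\kappa) \geq \alpha_*$; since an iteration of length $\leq j_{\alpha_*}(\kappa)$ cannot be cofinal in a regular cardinal of size $(j_{\alpha_*}(\kappa))^+$, the processed indices must stabilize below a repeat point $\gamma^* < (j_{\alpha_*}(\kappa))^+$, past which every index is revisited cofinally in $\alpha_*$. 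This is exactly what blocks the diagonal construction of the previous lemma: there a cofinal sequence in $\len \vec U$ of length $\cf \len \vec U \leq \kappa$ could be represented by a single function evaluated along finitely many points of $P$, producing a short cofinal sequence in $\alpha_*$; here a cofinal sequence in $\len \vec U$ has length $\kappa^+ > \kappa = \crit$, and every representing function $h \colon \kappa^n \to \Ord$ has range of size $\leq \kappa$, so no such short cofinal sequence can be captured in $M_{\alpha_*}[P]$.

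The main obstacle I expect is precisely the last reduction: converting ``no short cofinal sequence is definable from $P$'' into the sharp conclusion $\cf^{M_{\alpha_*}[P]}\alpha_* = j_{\alpha_*}(\kappa)$. The difficulty is the repeat point itself — because indices below $\gamma^*$ are reused cofinally, a putative cofinal $B \subseteq \alpha_*$ of order type $\lambda < j_{\alpha_*}(\kappa)$ need not have cofinal Mitchell orders, so one cannot merely project $B$ onto $(j_{\alpha_*}(\kappa))^+$. I would instead argue by cases on $\lambda$: for $\lambda \leq \kappa$ the boundedness estimate $j_{\alpha_*}(f)(\vec\zeta) < \kappa_\beta$ whenever $\vec\zeta < \kappa_\beta$ (using $\kappa_\beta = \crit j_{\beta,\alpha_*}$) shows that a $\leq\kappa$-sized family of $P$-parameters stays bounded below some $\kappa_{\beta^*}$, so such a $B$ cannot be cofinal once $\cf^V\alpha_* > \kappa$; for $\kappa < \lambda < j_{\alpha_*}(\kappa)$ one must use the comparison analysis behind the halting of the iteration, together with the $(j_{\alpha_*}(\kappa))^{+}$-c.c.\ of the extension (Remark \ref{remark:chain-condition-from-width}) preserving the regularity of $(j_{\alpha_*}(\kappa))^{+}$, to derive a contradiction from a short cofinal sequence. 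This is the technical core, it is exactly where the threshold $\len \vec U = \kappa^+$ is indispensable, and it is where, for $\len \vec U < \kappa^+$, the conclusion provably fails.
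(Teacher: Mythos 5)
Your reduction to showing $\cf^{M_{\alpha_*}[P]}(\alpha_*) = j_{\alpha_*}(\kappa)$ is the right first move, and you have correctly identified several relevant ingredients (the halting condition, the chain condition, the width of the representing functions). But there are two genuine problems. First, the assertion that ``the map $\kappa_\beta \mapsto \gamma_\beta$ lives in $M_{\alpha_*}[P]$'' is false when $\len \vec U = \kappa^+$, and provably so: the iteration halts at $\alpha_*$ exactly when every index $\gamma < \len j_{\alpha_*}(\vec U) = j_{\alpha_*}(\kappa^+)$ equals $j_{\beta,\alpha_*}(\gamma_\beta)$ for unboundedly many $\beta < \alpha_*$, so the map $\kappa_\beta \mapsto j_{\beta,\alpha_*}(\gamma_\beta)$ is a surjection from a subset of $j_{\alpha_*}(\kappa)$ onto $j_{\alpha_*}(\kappa^+)$; its membership in $M_{\alpha_*}[P]$ would collapse $\left(j_{\alpha_*}(\kappa)^+\right)^{M_{\alpha_*}}$ there, contradicting the very chain condition you invoke. (This is precisely the content of the paper's subsequent claim that the intersection model is \emph{strictly} larger than $M_{\alpha_*}[P]$ in this case; note also that pre-coherence is only guaranteed for $\len \vec U < \kappa^+$.) Second, your case $\kappa < \lambda < j_{\alpha_*}(\kappa)$ --- which is the whole point, since $\cf^V(\alpha_*)$ is large --- is left as ``one must use the comparison analysis \dots to derive a contradiction''; no argument is actually supplied there, and the width/counting heuristic of the preceding paragraph does not by itself produce one.

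The paper closes exactly this gap by changing models. Assume $\cf^{M_{\alpha_*}[P]}(j_{\alpha_*}(\kappa)) < j_{\alpha_*}(\kappa)$, fix $\alpha < \alpha_*$ with this cofinality below $\kappa_\alpha$, and pass to $M_\alpha[P\restriction\alpha] \supseteq M_{\alpha_*}[P]$, where the tail of the iteration, and hence the whole sequence $\langle j_{\beta,\alpha_*}(\gamma_\beta) \mid \beta < \alpha_*\rangle$, \emph{is} definable. By the halting condition this sequence is cofinal in $j_{\alpha_*}(\kappa^+)$, so $\cf^{M_\alpha[P\restriction\alpha]}(j_{\alpha_*}(\kappa^+)) = \cf^{M_\alpha[P\restriction\alpha]}(\alpha_*) < \kappa_\alpha$. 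On the other hand, $j_{\alpha,\alpha_*}$ is continuous at the $M_\alpha$-regular cardinal $j_\alpha(\kappa^+) = (\kappa_\alpha^+)^{M_\alpha} > \kappa_\alpha$, which remains regular in $M_\alpha[P\restriction\alpha]$ by the $\kappa_\alpha^+$-c.c.\ of that extension over $M_\alpha$; hence the same cofinality equals $j_\alpha(\kappa^+) > \kappa_\alpha$ --- a contradiction. If you want to salvage your outline, this is the missing pivot: the Mitchell-order sequence must be read off in $M_\alpha[P\restriction\alpha]$, not in $M_{\alpha_*}[P]$, and the contradiction comes from comparing the two computations of $\cf(j_{\alpha_*}(\kappa^+))$ in that single model, rather than from a direct case analysis of short cofinal subsets of $\alpha_*$.
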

\begin{proof}
We will prove the claim for the case $\len \vec U = \kappa^+$. The general case is similar.
\begin{claim}\label{claim:chain-condition-radin}
for every $\alpha$, $M_{\alpha}[P\restriction \alpha]$ is a $\kappa_{\alpha}^+$-c.c.\ extension of $M_\alpha$ and in particular $\kappa_\alpha^+$ is preserved.
\end{claim}
The proof for this claim is very similar to the proof of Lemma \ref{lemma:chain-condition-Prikry}, with the additional complication that every function in $M_{\beta}[P\restriction \beta]$ needs to be bounded (using an inductive hypothesis) by a corresponding function from $M_{\beta}$. \footnote{There is a different way to show that the regularity of $\kappa_\alpha^+$ is preserved, by showing that $\kappa_\alpha$ must be singular in $M_\alpha[P \restriction \alpha]$ (using an inductive hypothesis and the previous theorem) and thus the cofinality of this cardinal must be below $\kappa_\beta$ for some $\beta < \alpha$. Similar argument appears ahead.}

Let us assume now that $\cf^{M_{\alpha_*}[P]}(j_{\alpha_*}(\kappa)) < j_{\alpha_*}(\kappa)$. 
So, there is $\alpha < \alpha_*$ such that $\cf^{M_{\alpha_*}[P]}(j_{\alpha_*}(\kappa)) < \kappa_{\alpha}$ and thus $M_{\alpha}[P\restriction \alpha] \models \cf \alpha_* < \kappa_\alpha$.

In this model, $M_{\alpha}[P\restriction \alpha]$, 
one can compute the rest of the iteration $j_{\alpha, \alpha_*}$ and in particular, 
the sequence $\langle \gamma_{\alpha} \mid \alpha < \alpha_*\rangle$. It is clear that this sequence is cofinal at $j_{\alpha_*}(\kappa^+) = \len j_{\alpha_*}(\vec U)$. 

As $P$ is cofinal at $j_{\alpha_*}(\kappa)$, $\cf^{M_{\alpha}[P\restriction \alpha]}(\alpha_*) = \cf^{M_{\alpha}[P\restriction \alpha]}(j_{\alpha_*}(\kappa)) < \kappa_{\alpha}$.

Since the embedding $j_{\alpha, \alpha_*}$ is continuous at $j_{\alpha_*}(\kappa^+)$, we conclude that the cofinality of $j_{\alpha_*}(\kappa^+)$ in $M_{\alpha}[P\restriction \alpha]$ must be the same as the cofinality of $j_{\alpha}(\kappa^+)$ which is strictly larger than $\kappa_{\alpha}$, by the chain condition of the forcing. 

But this is a contradiction --- on the one hand the cofinality of $\alpha_*$ must be strictly below $\kappa_\alpha$ and on the other hand it must be $j_{\alpha}(\kappa^+)$.\end{proof}


\begin{lemma}\label{lemma:lifting-embedding}
Let $\vec U$ be a Mitchell increasing sequence of measures and let us consider the corresponding iteration. 

If $\alpha < \alpha_*$ satisfies that $\gamma_\alpha$ is strictly larger than $j_{\beta,\alpha}(\gamma_\beta)$ for all $\beta < \alpha$, then the embedding $j_{\alpha + 1, \alpha_*} \colon M_{\alpha + 1} \to M_{\alpha_*}$ lifts to an embedding $\tilde{j}_{\alpha+1,\alpha_*} \colon M_{\alpha + 1}[P\restriction \alpha] \to M_{\alpha_*}[P\restriction \alpha]$. 
\end{lemma}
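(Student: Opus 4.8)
The plan is to present $M_{\alpha+1}[P\restriction\alpha]$ as a set-generic extension of $M_{\alpha+1}$ by a forcing that $j_{\alpha+1,\alpha_*}$ fixes, and then invoke the standard lifting criterion for elementary embeddings (Silver's method): an elementary $j\colon M \to N$ with a generic $G$ for $\mathbb{R}\in M$ and a generic $H$ for $j(\mathbb{R})$ over $N$ satisfying $j\image G \subseteq H$ lifts to $\tilde{j}(\dot\tau^G) = (j(\dot\tau))^H$. First I would record the action of $j := j_{\alpha+1,\alpha_*}$ on small sets. Its critical point is $\kappa_{\alpha+1} = j_{\alpha+1}(\kappa)$, which is measurable, hence inaccessible, in $M_{\alpha+1}$, so $j\restriction V^{M_{\alpha+1}}_{\kappa_{\alpha+1}}$ is the identity. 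Since the sequence of critical points is strictly increasing and continuous, $\alpha \leq \kappa_\alpha < \kappa_{\alpha+1}$, and every entry $\kappa_\beta$ ($\beta<\alpha$) of $P\restriction\alpha$ lies below $\kappa_\alpha$. Thus $P\restriction\alpha$, although \emph{not} a member of $M_{\alpha+1}$, is a subset of $V^{M_{\alpha+1}}_{\kappa_{\alpha+1}}$ whose entries and whose domain are all fixed pointwise by $j$.

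The heart of the argument — and the only place the hypothesis is used — is the identification of the forcing. The assumption that $\gamma_\alpha$ strictly dominates every $j_{\beta,\alpha}(\gamma_\beta)$ for $\beta<\alpha$ says that the whole iteration $M_0\to\cdots\to M_\alpha$ used only measures of index $<\gamma_\alpha$, i.e.\ measures drawn from $\vec{W} := j_\alpha(\vec U)\restriction\gamma_\alpha$. Because $\vec U$ is Mitchell increasing, each $j_\alpha(\vec U)(\delta)$ with $\delta<\gamma_\alpha$ belongs to $\Ult(M_\alpha, j_\alpha(\vec U)(\gamma_\alpha)) = M_{\alpha+1}$; the point to verify carefully is that the \emph{sequence} $\vec{W}$ itself is an element of $M_{\alpha+1}$ (using coherence/pre-coherence of the part of the sequence below $\gamma_\alpha$), and that $\vec{W}\in V^{M_{\alpha+1}}_{\kappa_{\alpha+1}}$, whence $j(\vec{W}) = \vec{W}$. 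Granting this, $P\restriction\alpha$ is precisely the generic object for the Radin-type forcing $\mathbb{R}(\vec{W})$ determined by $\vec{W}$; by Claim \ref{claim:chain-condition-radin} applied at stage $\alpha$, $M_\alpha[P\restriction\alpha]$ is a generic extension of $M_\alpha$ by a forcing of size $<\kappa_{\alpha+1}$. As $M_{\alpha+1}\subseteq M_\alpha$ and the two models compute $\mathbb{R}(\vec{W})$ and its subsets below $\kappa_{\alpha+1}$ identically, genericity of $P\restriction\alpha$ over $M_\alpha$ yields genericity over $M_{\alpha+1}$ for the same forcing $\mathbb{R}(\vec{W})\in M_{\alpha+1}$. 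I expect this paragraph — pinning down that the forcing adding $P\restriction\alpha$ is captured inside $M_{\alpha+1}\cap V_{\kappa_{\alpha+1}}$ and that $P\restriction\alpha$ is generic over $M_{\alpha+1}$ — to be the main obstacle, precisely because $\vec U$ is only assumed Mitchell increasing.

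Finally I would invoke the lifting lemma. Let $G$ be the $\mathbb{R}(\vec{W})$-generic filter over $M_{\alpha+1}$ interdefinable with $P\restriction\alpha$, so $M_{\alpha+1}[P\restriction\alpha] = M_{\alpha+1}[G]$. Since $j$ fixes $\mathbb{R}(\vec{W})$ we have $j(\mathbb{R}(\vec{W})) = \mathbb{R}(\vec{W})$, and since every condition of $G$ lies in $V^{M_{\alpha+1}}_{\kappa_{\alpha+1}}$ we have $j\image G = G$. Because $M_{\alpha_*}\subseteq M_{\alpha+1}$ is a definable inner class, $G$ is also generic over $M_{\alpha_*}$ for $j(\mathbb{R}(\vec{W}))$, and $j\image G \subseteq G$. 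The lifting criterion then yields the elementary embedding
\[
\tilde{j}_{\alpha+1,\alpha_*}\colon M_{\alpha+1}[P\restriction\alpha] \longrightarrow M_{\alpha_*}[P\restriction\alpha], \qquad \tilde{j}_{\alpha+1,\alpha_*}(\dot\tau^{G}) = (j(\dot\tau))^{G},
\]
which extends $j_{\alpha+1,\alpha_*}$ and fixes $P\restriction\alpha$. Here genericity is indispensable: it is exactly what makes $\tilde{j}_{\alpha+1,\alpha_*}$ well defined and elementary, since any condition in $G$ deciding a statement about $\dot\tau$ has $j$-image again in $G$ deciding the corresponding statement about $j(\dot\tau)$, by the forcing theorem together with the elementarity of $j_{\alpha+1,\alpha_*}$.
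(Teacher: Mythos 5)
Your overall strategy (present $M_{\alpha+1}[P\restriction\alpha]$ as a set-generic extension by a forcing fixed by $j_{\alpha+1,\alpha_*}$ and lift via Silver's criterion) would work \emph{if} its central premise were available, but that premise is exactly what you do not prove: that $P\restriction\alpha$ is generic over $M_{\alpha+1}$ (or even over $M_\alpha$) for a concretely identified forcing $\mathbb{R}(\vec W)$ lying in $V^{M_{\alpha+1}}_{\kappa_{\alpha+1}}$. You flag this as ``the main obstacle'' and then proceed as if it were granted, but it is not a technical verification --- it is essentially the Bukovsk\'y--Dehornoy/genericity theorem for Radin forcing, a substantial result (attributed in the introduction to unpublished work of Cummings and Woodin) that the paper is deliberately reconstructing rather than assuming. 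The appeal to Claim \ref{claim:chain-condition-radin} does not close the gap: that claim, via Bukovsk\'y's theorem, yields only that $M_\alpha[P\restriction\alpha]$ is a generic extension of $M_\alpha$ by \emph{some} $\kappa_\alpha^+$-c.c.\ forcing; a chain condition bounds neither the size of the forcing nor locates it inside $V_{\kappa_{\alpha+1}}$, and indeed the paper explicitly poses as an open question whether the Bukovsk\'y forcing can be taken of size $\leq 2^{\kappa_\omega}$ even in the vanilla Prikry case. A secondary issue is that the lemma assumes only that $\vec U$ is Mitchell increasing, so getting the whole sequence $\vec W = j_\alpha(\vec U)\restriction\gamma_\alpha$ (as opposed to its individual entries) into $M_{\alpha+1}$ requires an argument you only gesture at.

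The paper's proof avoids forcing and genericity entirely. Writing $\gamma_\alpha = j_\alpha(\bar\gamma)$ (after finitely many reductions), it passes to $N = \Ult(V, U_{\bar\gamma})$, notes that $\vec U\restriction\bar\gamma \in N$ and $\power(\kappa) \subseteq N$, so the entire iteration up to stage $\alpha$ is reproduced \emph{inside} $N$; consequently $P\restriction\alpha$ is an \emph{element} of $N$, bounded below $k_0(\kappa)$. The map $j_{\alpha+1,\alpha_*}$ is then realized as (the restriction of) an iteration map $k_{\alpha+1,\alpha_*}$ of $N$, which fixes $P\restriction\alpha$ and carries the definable class $M_{\alpha+1}[P\restriction\alpha] = \bigcup_\zeta L[M_{\alpha+1}\cap V_\zeta, P\restriction\alpha]$ into $M_{\alpha_*}[P\restriction\alpha]$, elementarity being read off level by level from this presentation. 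If you want to salvage your route, you would first have to prove the Radin genericity of $P\restriction\alpha$ over $M_{\alpha+1}$ from scratch, which is a harder theorem than the lemma itself.
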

\begin{proof}
Without loss of generality, $\gamma_\alpha \in j_{\alpha}''\len\vec U$. Otherwise, we will need to repeat the following process finitely many times. 

Let $\bar\gamma$ be an ordinal such that $j_{\alpha}(\bar\gamma) = \gamma_\alpha$. 

Let us consider the ultrapower by $U_{\bar{\gamma}}$, $k_0 \colon V \to N$. By the definition, in this model, the sequence $\vec U \restriction \bar \gamma$ exists. By our choice of $\gamma_\alpha$, and as $P(\kappa), \vec U \restriction \bar\gamma \in N$, if we will start to iterate $N$ based on $\vec U \restriction \bar\gamma$ we will obtain exactly the iteration $j_{\alpha} \restriction N$ and $j_{\alpha}(\bar\gamma) = \gamma_\alpha$. 

Next, the following diagram commutes:
\begin{center}
\begin{tikzcd}
V\ar[r,"k_0"]\ar[d, "j_\alpha"] & N\ar[d, "j_\alpha"]\ar[r,"k_{\alpha+1,\alpha_*}"] & 
N_{\alpha_*}\ar[d, "j_{\alpha}"]\\ 
M_{\alpha}\ar[r, "j_{\alpha,\alpha+1}"] & M_{\alpha+1}\ar[r, "j_{\alpha+1,\alpha_*}"] & M_{\alpha_*}
\end{tikzcd} 
\end{center}
where $k_{\alpha+1, \alpha_*}$ is the iteration as defined in $N$ using $k_0(\vec U)$. 

In $N$, one can compute $P\restriction \alpha$ and as $k_0(\kappa)$ is an inaccessible much larger than $\kappa$ in $N$, this set is bounded below $k_0(\kappa)$. Thus, the embedding $k_{\alpha+1,\alpha_*}$ can be restricted to the class $M_{\alpha+1}[P\restriction \alpha]$. 
Moreover, since $k_{\alpha+1, \alpha_*}\restriction M_{\alpha+1} = j_{\alpha+1,\alpha_*}$, we conclude that the image of this map is going to be contained in $M_{\alpha_*}[P\restriction \alpha]$. 

In order to show elementarity, we recall the definition \[M_{\alpha + 1}[P\restriction \alpha] = \bigcup_{\zeta\in \Ord} L[M_{\alpha+1}\cap V_\zeta, P\restriction \alpha]\]
\[M_{\alpha_*}[P\restriction \alpha] = \bigcup_{\zeta\in \Ord} L[M_{\alpha_*}\cap V_\zeta, P\restriction \alpha]\]  
so the restriction of $k_{\alpha+1,\alpha_*}$ to each component is elementary, and thus it is elementary.
\end{proof}

\begin{theorem}\label{thm:BD-for-short-radin}
Let $\vec U$ be a Mitchell increasing sequence of measures and let us assume that $\len \vec U <\kappa^+$. Then $\bigcap M_\alpha[P\restriction \alpha] = M_{\alpha_*}[P]$.
\end{theorem}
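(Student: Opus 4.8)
The plan is to follow the two-step template of Theorem~\ref{thm:BD-for-vanilla}, but in the harder regime of Theorems~\ref{thm:BD-for-EBPF} and~\ref{thm:BD-for-interleaved-collapses}, where the intermediate models are genuine generic extensions and no single elementary embedding \(M_\alpha[P\restriction\alpha]\to M_{\alpha_*}[P]\) is available. The inclusion \(M_{\alpha_*}[P]\subseteq\bigcap_{\alpha<\alpha_*}M_\alpha[P\restriction\alpha]\) is the lemma proved just above, so only the reverse inclusion is at stake. Two global facts come for free. Since \(\len\vec U<\kappa^+\), the sequence \(\vec U\) is pre-coherent by the earlier remark, so by the preceding closure theorem \(M_{\alpha_*}[P]\) is closed under \(\kappa\)-sequences of ordinals; and the cofinality analysis gives \(\lambda:=\cf\alpha_*<\kappa\) together with a cofinal sequence \(\langle\alpha_i\mid i<\lambda\rangle\) computable inside \(M_{\alpha_*}[P]\). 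After passing to a subsequence I arrange that each \(\alpha_i\) is a point to which Lemma~\ref{lemma:lifting-embedding} applies (such ``record'' points are cofinal in \(\alpha_*\); in the degenerate case \(\len\vec U=1\) one has \(M_\alpha[P\restriction\alpha]=M_\alpha\) and the pure embeddings already serve as lifts).

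Fix a set of ordinals \(X\subseteq\delta\) with \(X\in\bigcap_{\alpha<\alpha_*}M_\alpha[P\restriction\alpha]\). The first point is that \(X\) lies in the domain of the relevant lift: since \(\kappa_{\alpha_i}\in M_{\alpha_i+1}\) we have \(M_{\alpha_i+1}[P\restriction(\alpha_i+1)]=M_{\alpha_i+1}[P\restriction\alpha_i]\), so \(X\in M_{\alpha_i+1}[P\restriction\alpha_i]\). Working in that model I set
\[Y_i=\{\xi\mid j_{\alpha_i+1,\alpha_*}(\xi)\in X\},\]
which is definable from \(X\) and the (definable) tail embedding, and push it forward through the lift of Lemma~\ref{lemma:lifting-embedding},
\[Z_i=\tilde j_{\alpha_i+1,\alpha_*}(Y_i)\in M_{\alpha_*}[P\restriction\alpha_i].\]
By elementarity, for every ordinal \(\beta=\tilde j_{\alpha_i+1,\alpha_*}(\bar\beta)\) in the range one has \(\beta\in Z_i\iff\bar\beta\in Y_i\iff\beta\in X\). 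Because the lifts act on ordinals exactly as the pure embeddings and \(M_{\alpha_*}\) is the direct limit of the system, every \(\beta<\delta\) belongs to \(\range\tilde j_{\alpha_i+1,\alpha_*}\) for all large \(i\); consequently
\[\beta\in X\iff\forall^*i\ \beta\in Z_i.\]

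It remains to place \(\langle Z_i\mid i<\lambda\rangle\) inside \(M_{\alpha_*}[P]\), and this is the main obstacle, exactly where the loss of elementarity bites. The \(Z_i\) are sets of ordinals of size possibly \(\ge\kappa\), so closure under \(\kappa\)-sequences of ordinals does not gather them directly, and — crucially — each \(Z_i\) lives in the genuine extension \(M_{\alpha_*}[P\restriction\alpha_i]\) rather than in the pure model \(M_{\alpha_*}\), so the \(M_{\alpha_*}\)-representation cannot be applied to \(Z_i\) itself. The remedy is the name-and-realization device of Theorems~\ref{thm:BD-for-EBPF} and~\ref{thm:BD-for-interleaved-collapses}: choose in \(M_{\alpha_i}\) a name \(\dot\tau_i\) for \(Y_i\) for the forcing that adds \(P\restriction\alpha_i\), push the names forward through the pure embeddings to \(\sigma_i=j_{\alpha_i,\alpha_*}(\dot\tau_i)\in M_{\alpha_*}\), and gather \(\langle\sigma_i\mid i<\lambda\rangle\) in \(M_{\alpha_*}[P]\) by the \(M_{\alpha_*}\)-representation of elements (each \(\sigma_i=j_{\alpha_*}(f_i)\) evaluated at finitely many members of \(P\)) together with \(\lambda<\kappa=\crit j_{\alpha_*}\). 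One then recovers \(Z_i\) as the realization of \(\sigma_i\) against the image generic that Lemma~\ref{lemma:lifting-embedding} transports into \(M_{\alpha_*}[P\restriction\alpha_i]\).

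The delicate step is precisely this realization: verifying that the generic information needed to evaluate each \(\sigma_i\) is genuinely coded into \(P\) and is uniquely determined, so that \(\langle Z_i\rangle\) — and hence, via the displayed equivalence, \(X\) itself — is computed inside \(M_{\alpha_*}[P]\). This is the analogue, for the short Radin iteration, of the uniqueness argument of Lemma~\ref{lemma:EBPF-realizing-names}, and I expect it, together with the bookkeeping that the chosen cofinal set of indices \(\alpha_i\) really admits the lifts of Lemma~\ref{lemma:lifting-embedding}, to be the hardest part of the proof.
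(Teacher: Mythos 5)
Your skeleton agrees with the paper's (lifted embeddings, the sets $Y_i$ and $Z_i$, recovery of $X$ via $\forall^* i$, closure under $\lambda$-sequences), but there are two genuine problems. First, your reduction to a cofinal set of ``record points'' at which Lemma \ref{lemma:lifting-embedding} applies directly is false in general: already for $\len \vec U = 2$ the iteration has length $\omega^2$, $\gamma_{\omega\cdot k}=1$ for every $k\geq 1$, and since $1$ is below every critical point we get $\gamma_{\omega\cdot k}=j_{\omega,\omega\cdot k}(\gamma_\omega)$, so the only index satisfying the strict hypothesis of Lemma \ref{lemma:lifting-embedding} is $\omega$ --- the record points are not cofinal in $\alpha_*$. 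The paper produces the lifted embeddings at a cofinal set of indices $\beta_n$ by a different mechanism: an induction on $\kappa$ and $\len\vec U$, in which the inductive hypothesis (transferred into $M_{\beta_n+1}[P\restriction\beta_n]$ by elementarity) identifies the intersection over each block $[\beta_n,\beta_{n+1})$ with $M_{\beta_{n+1}}[P\restriction\beta_{n+1}]$, with Lemma \ref{lemma:lifting-embedding} applied only finitely many times per block. Your proposal has no substitute for this induction, and without it you cannot justify restricting the intersection to your chosen subsequence nor obtain the lifts there.

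Second, the step you single out as the main obstacle and leave unproved --- placing $\langle Z_i\mid i<\lambda\rangle$ inside $M_{\alpha_*}[P]$ --- does not require the name-and-realization device of Theorems \ref{thm:BD-for-EBPF} and \ref{thm:BD-for-interleaved-collapses}; importing it reintroduces precisely the difficulty (realizing names against generic filters one does not control) that Lemma \ref{lemma:lifting-embedding} exists to bypass. Since $\tilde j_{\alpha_i+1,\alpha_*}$ is a genuine elementary embedding with target $M_{\alpha_*}[P\restriction\alpha_i]$, each $Z_i$ already lies in $M_{\alpha_*}[P\restriction\alpha_i]\subseteq M_{\alpha_*}[P]$. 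To gather the sequence, code each $Z_i$ by its index in the canonical wellordering of $L[M_{\alpha_*}\cap V_{\zeta_i}, P\restriction\alpha_i]$; the parameters $M_{\alpha_*}\cap V_{\zeta_i}$ and $P\restriction\alpha_i$ are uniformly computable inside $M_{\alpha_*}[P]$, and the resulting $\lambda$-sequence of ordinals $\langle \zeta_i,\epsilon_i\rangle$ belongs to $M_{\alpha_*}[P]$ by the closure theorem. That is what ``repeat the argument of Theorem \ref{thm:BD-for-vanilla}'' amounts to here. As written, your argument rests on a false cofinality claim and terminates in an unproved realization lemma, so it is not a complete proof.
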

\begin{proof}
Let us assume by induction on $\kappa$ and $\len \vec U$ that the theorem holds, namely that $\bigcap M_{\alpha}[P \restriction \alpha] = M_{\alpha_*}[P]$. Let $\beta_i$ the cofinal sequences at $\alpha_*$ defined in the cases above.

{\bf Case 0:} $\len \vec U$ is a successor ordinal. In this case, apply the inductive hypothesis for $\len \vec U$, (and elementarity) in the model $M_{\beta_n + 1}[P \restriction \beta_n]$. 
We get that $M_{\beta_{n+1}}[P \restriction \beta_{n+1}] = \bigcap_{\alpha \in [\beta_n, \beta_{n+1})} M_{\alpha}[P\restriction \beta_n][P \restriction [\beta_n, \alpha)]$. 
This is true, by Lemma \ref{lemma:lifting-embedding}, applied finitely many times. So, in order to show that the theorem holds, consider $X \in \bigcap_{n<\omega} M_{\beta_n + 1}[P \restriction \beta_n]$, and use the elementary emebddings $\tilde j_{\beta_n + 1, \alpha_*} \colon M_{\beta_n+1}[P \restriction \beta_n] \to M_{\alpha_*}[P\restriction \beta_n]$, and repeat the argument of Theorem \ref{thm:BD-for-vanilla}. 

{\bf Case 1:} $\cf \len \vec U < \kappa$. This case is the same, we notational differences, using the closure of the models $M_\alpha[P]$ under $\kappa$-sequences.

{\bf Case 2:} $\cf \len \vec U = \kappa$. In this case, $\cf \alpha_* = \omega$ and we can repeat the argument of Case 0.
\end{proof}
It worth mentioning that the intersection theorem is quite weak in this case. Indeed, if we would apply it for an arbitrary sequence of measures, which might fail to be Mitchell increasing, it still holds, but it might be quite degenerated. For example, if we look at an iteration of length $\omega + 1$ of the same measure $U$ and look at the model $M_{\omega + 1}[P]$, then since $P \restriction \omega$ defines the normal measure $j_{\omega}(U)$, this model is going to be simply $M_\omega[P]$ (in particular, class many cardinals of $M_{\omega+1}$ are collapsed in $M_{\omega+1}[P]$). 

\begin{claim}
Let $\vec U$ be a Mitchell increasing sequence of measures of length $\kappa^+$. 

Then $\bigcap_{\alpha < \alpha_*} M_{\alpha}[P \restriction \alpha]$ is strictly larger than $M_{\alpha_*}[P\restriction \alpha_*]$.
\end{claim}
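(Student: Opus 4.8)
The plan is to separate the two models through the \emph{cofinality} they assign to $j_{\alpha_*}(\kappa)$. Since $\len\vec U=\kappa^{+}$ is regular we have $\cf^V(\len\vec U)=\kappa^{+}$, so by the previous lemma $j_{\alpha_*}(\kappa)$ is regular in $M_{\alpha_*}[P]$; as $P$ enumerates a cofinal subset of $j_{\alpha_*}(\kappa)$ of order type $\alpha_*$, this gives $\cf^{M_{\alpha_*}[P]}(\alpha_*)=j_{\alpha_*}(\kappa)$. Consequently it suffices to produce a cofinal subset of $\alpha_*$ of order type strictly below $j_{\alpha_*}(\kappa)$ lying in $\bigcap_{\alpha<\alpha_*}M_\alpha[P\restriction\alpha]$: any such set singularizes $j_{\alpha_*}(\kappa)$ and so cannot belong to $M_{\alpha_*}[P]$, yielding the strict inclusion.

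First I would check that each intermediate model already regards $j_{\alpha_*}(\kappa)$ as singular. Fix $\alpha<\alpha_*$. As in the proof of the easy inclusion, from the parameter $\gamma_\alpha$ the model $M_\alpha[P\restriction\alpha]$ reconstructs the entire tail of the iteration, the tail critical points $\langle\kappa_\beta\mid\alpha\le\beta<\alpha_*\rangle$, and the indices $\gamma_\beta$ read off the pre-coherent sequence $j_\alpha(\vec U)$ via $j_\alpha(c)$. The indices $\gamma_\beta$ for $\beta\ge\alpha$ are cofinal in the tail length $j_\alpha(\kappa^{+})=(\kappa_\alpha^{+})^{M_\alpha}$, and the stages at which they attain a new supremum form a cofinal subset of $\alpha_*$ of order type at most $j_\alpha(\kappa^{+})$. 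Since $j_\alpha(\kappa^{+})<\kappa_{\alpha+1}<j_{\alpha_*}(\kappa)$, this shows $\cf^{M_\alpha[P\restriction\alpha]}(\alpha_*)<j_{\alpha_*}(\kappa)$ for every $\alpha$.

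The substance is to upgrade these model-by-model cofinal sequences to a single one living in the intersection. Here I would exploit that $\crit j_\alpha=\kappa<\kappa^{+}$, so for all $\alpha\ge1$ the power set $\mathcal{P}(\kappa^{+})$ is computed identically in every $M_\alpha$ (indeed $\kappa^{+}<\kappa_1=\crit j_{1,\alpha}$, so $j_{1,\alpha}$ fixes $\mathcal{P}(\kappa^{+})$). Fix once and for all a continuous increasing sequence $\langle\eta_i\mid i<\kappa^{+}\rangle\in M_1$ cofinal in $\len\vec U=\kappa^{+}$; it then belongs to every $M_\alpha$. Using $\langle\eta_i\rangle$ as a common scaffold, I would let $\rho_i<\alpha_*$ be the least stage at which the index pattern has passed the $\eta_i$-th level, measured relative to the \emph{intermediate} sequences $j_\alpha(\vec U)$ rather than relative to $j_{\alpha_*}(\vec U)$. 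The resulting $\langle\rho_i\mid i<\kappa^{+}\rangle$ is cofinal in $\alpha_*$, has order type $\kappa^{+}<j_{\alpha_*}(\kappa)$, and is computable inside each $M_\alpha[P\restriction\alpha]$ from the tail reconstruction (for $\beta\ge\alpha$) and from $P\restriction\alpha$ together with $j_\alpha(c)$ (for $\beta<\alpha$); hence it lies in the intersection and completes the argument.

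The hard part is precisely the reconciliation forced by the regularity of $j_{\alpha_*}(\kappa)$ in $M_{\alpha_*}[P]$: that model also has $P$ and the global pre-coherence function $j_{\alpha_*}(c)$, hence sees $\langle\gamma_\beta\rangle$ as cofinal in $j_{\alpha_*}(\kappa^{+})$, yet it must \emph{not} be able to extract any cofinal subset of $\alpha_*$ of order type below $j_{\alpha_*}(\kappa)$. What makes $\langle\rho_i\rangle$ genuinely escape $M_{\alpha_*}[P]$ is that it is read off the intermediate measure sequences $j_\alpha(\vec U)$, whose length $j_\alpha(\kappa^{+})$ is a bounded ordinal far below $j_{\alpha_*}(\kappa)$: this is information present in each iterate $M_\alpha$ that collapses in the direct limit, since relative to $j_{\alpha_*}(\vec U)$ the same data only yields a sequence of order type $j_{\alpha_*}(\kappa)$. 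Equivalently, the obstacle is that the short-length argument of Theorem \ref{thm:BD-for-short-radin} invoked closure of $M_{\alpha_*}[P]$ under $\cf(\alpha_*)$-sequences, whereas at $\len\vec U=\kappa^{+}$ the model $M_{\alpha_*}[P]$ is only $\kappa$-closed and never $\kappa^{+}$-closed; verifying carefully that $\langle\rho_i\rangle$ lands in every $M_\alpha[P\restriction\alpha]$ while defeating this missing closure is the crux of the claim.
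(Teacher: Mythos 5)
There is a genuine gap here, and it is fatal to the chosen separating invariant: the intersection model computes the cofinality of $\alpha_*$ no better than $M_{\alpha_*}[P]$ does, so no ``short'' cofinal subset of $\alpha_*$ can witness the strict inclusion. Run the argument of the preceding lemma inside a fixed $M_\alpha[P\restriction\alpha]$ with $\alpha\ge 1$: the tail of the iteration is definable there, the sequence of indices is cofinal in $j_{\alpha_*}(\kappa^+)=\sup j_{\alpha,\alpha_*}\image j_\alpha(\kappa^+)$, and by continuity together with the $\kappa_\alpha^+$-c.c.\ of Claim \ref{claim:chain-condition-radin} one gets $\cf^{M_\alpha[P\restriction\alpha]}(\alpha_*)=j_\alpha(\kappa^+)=(\kappa_\alpha^+)^{M_\alpha}$. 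This is strictly greater than $\kappa^+$ already for $\alpha=1$ (and $M_1[P\restriction 1]=M_1$, since $P\restriction 1=\langle\kappa\rangle\in M_1$), and it grows unboundedly in $\alpha$, with supremum $j_{\alpha_*}(\kappa^+)$. Hence a cofinal subset of $\alpha_*$ of order type $\kappa^+$ exists in $V=M_0[P\restriction 0]$ but in \emph{no} $M_\alpha[P\restriction\alpha]$ with $\alpha\ge1$; the intersection model contains no cofinal subset of $\alpha_*$ of order type below $j_{\alpha_*}(\kappa)$ and agrees with $M_{\alpha_*}[P]$ that $j_{\alpha_*}(\kappa)$ is regular. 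The concrete failure point in your construction is the scaffold: although $\langle\eta_i\mid i<\kappa^+\rangle$ is fixed by $j_{1,\alpha}$, translating ``the $\eta_i$-th level'' into the intermediate sequence $j_\alpha(\vec U)$ requires the map $i\mapsto j_\alpha(\eta_i)$, which is a cofinal $\kappa^+$-sequence in $j_\alpha(\kappa^+)$ and therefore cannot belong to $M_\alpha$. Your second paragraph already contains the warning sign: the order type of the ``new supremum'' stages is not uniformly small, it is exactly $j_\alpha(\kappa^+)$, a quantity depending on $\alpha$.

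The paper separates the two models by cardinality, not cofinality: the witness is the index sequence $\Gamma=\langle j_{\alpha,\alpha_*}(\gamma_\alpha)\mid\alpha<\alpha_*\rangle$. Its tail from stage $\alpha$ onward is outright definable in $M_\alpha$, and a double induction (on the length of the measure sequence and on the stage) places the initial segment in $M_\alpha[P\restriction\alpha]$, so $\Gamma$ lies in the intersection. On the other hand, $\Gamma$ maps $\alpha_*\le j_{\alpha_*}(\kappa)$ onto $j_{\alpha_*}(\kappa^+)=\left(j_{\alpha_*}(\kappa)^+\right)^{M_{\alpha_*}}$, so its membership in $M_{\alpha_*}[P]$ would collapse a cardinal that Claim \ref{claim:chain-condition-radin} shows is preserved. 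Note that this causes no tension in the intermediate models, where $j_{\alpha_*}(\kappa^+)$ is merely an ordinal of $M_\alpha$-cardinality $(\kappa_\alpha^+)^{M_\alpha}<j_{\alpha_*}(\kappa)$; the collapse only becomes a contradiction at the direct limit, which is exactly the asymmetry your proposal needed but did not have.
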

\begin{proof}
Let us show that $\Gamma = \langle j_{\alpha,\alpha_*}(\gamma_\alpha) \mid \alpha < \alpha_*\rangle$ belongs to the intersection model, but not to the generic extension. 

First, let us show by induction on $\beta$ for every sequence of measures on $\kappa$ of length $\beta < \kappa^+$, $\vec{U}'$ the corresponding sequence $\Gamma'$ belongs to $M'_{\beta_\star}[P']$ (where all those objects are defined using $\vec{U}'$. 

Let us assume that the claim is proved for every $\beta' < \beta$, and let us consider the case of a sequence of length $\beta$. Since $M_{\beta_*}[P] = \bigcap_{\delta < \beta_*} M_{\delta }[P\restriction \delta]$, it is enough to show that for every $\delta < \beta_*$, $\Gamma \in M_{\delta}[P \restriction \delta]$.

We prove that $\Gamma \in M_{\delta}[P\restriction \delta]$ using a second level of induction, on $\delta < \beta_*$. Note that since $\Gamma \restriction [\delta, \beta_*) \in M_\delta$, it is enough to show that $\Gamma \restriction \delta \in M_\delta[P\restriction \delta]$.

Let $\rho < \delta$ be the last ordinal such that $j_{\rho,\delta}(\gamma_\rho) \geq \gamma_{\delta}$, assuming that there is an ordinal $\rho$ such that $j_{\rho,\delta}(\gamma_\rho) \geq \gamma_{\delta}$. If there is such an ordinal then there is a maximal one, since below every $\rho$ which is large enough so that $\gamma_\delta$ will be in the image of $j_{\rho,\delta}$, and $j_{\rho,\delta}(\gamma_\rho) > \gamma_\delta$ there are unboundedly many ordinals such that $j_{\rho,\delta}(\gamma_{\rho'}) = \gamma_{\delta}$.  Using the definition of $\gamma_\delta$, we know that this set is bounded and using the definition of $\gamma_\rho$, we know that it is closed.

By the inductive hypothesis, $\Gamma \restriction \delta \in M_{\rho}[P \restriction \rho]$. In $M_{\rho + 1}$, the iteration up to $\delta$ is definable using the measure sequence $j_{0, \rho + 1}(\vec{U} \restriction \gamma_\rho)$, which by the (external) induction hypothesis, pushed forward by $j_{0,\rho+1}$, satisfies that $M_{\delta}[P\restriction [\rho, \delta)$ contains $\Gamma\restriction [\rho, \delta)$. Combining all together, the result follows.

Next, let us verify that $\Gamma \notin M_{\alpha_*}[P]$. The map $\kappa_\alpha \mapsto \gamma_\alpha$ is a surjection on $j_{\alpha_*}(\len \vec U)$ which is $j_{\alpha_*}(\kappa^+)$, and in particular $j_{\alpha_*}(\kappa^+)$ is collapsed. But, this is impossible, by Claim \ref{claim:chain-condition-radin}. 
\end{proof}

\begin{question}
Is $M_{\alpha_*}[P]$ is always closed under $\kappa$-sequences?
\end{question}

\begin{question}
Is there a parallel for the intersection theorem for Radin forcing with $o(\kappa) \geq \kappa^+$? 
\end{question}

\providecommand{\bysame}{\leavevmode\hbox to3em{\hrulefill}\thinspace}
\providecommand{\MR}{\relax\ifhmode\unskip\space\fi MR }
\providecommand{\MRhref}[2]{%
  \href{http://www.ams.org/mathscinet-getitem?mr=#1}{#2}
}
\providecommand{\href}[2]{#2}

\end{document}